\documentclass[11pt]{article}

\usepackage[T1]{fontenc}
\usepackage[utf8]{inputenc}
\usepackage[margin=1in]{geometry}
\usepackage{setspace}
\onehalfspacing
\usepackage{newtxtext}
\usepackage{newtxmath}

\usepackage{amsmath}
\usepackage{amssymb}
\usepackage{amsfonts}
\usepackage{amsthm}
\usepackage{mathtools}
\usepackage{bm}
\usepackage{bbm}

\usepackage{graphicx}
\usepackage{subcaption}
\usepackage{booktabs}
\usepackage{tabularx}
\usepackage{multirow}
\usepackage{array}

\usepackage{enumitem}
\usepackage{algorithm}
\usepackage{algorithmic}

\usepackage{tikz}
\usetikzlibrary{arrows.meta, positioning, shadows, fit, backgrounds, calc}

\usepackage{authblk}

\setlength{\affilsep}{6pt}

\usepackage[round,compress]{natbib}
\usepackage[dvipsnames]{xcolor}
\usepackage[colorlinks=true,allcolors=green!60!black]{hyperref}

\newcommand{\defeq}{\mathrel{\mathop:}=}

\newcommand{\E}{\mathbb{E}}

\newcommand{\Prob}{\mathbb{P}}

\DeclareMathOperator{\Bel}{Bel}
\DeclareMathOperator{\Pl}{Pl}
\newcommand{\R}{\mathbb{R}}

\newtheorem{theorem}{Theorem}[section]
\newtheorem{lemma}[theorem]{Lemma}
\newtheorem{proposition}[theorem]{Proposition}
\newtheorem{corollary}[theorem]{Corollary}

\theoremstyle{definition}
\newtheorem{definition}[theorem]{Definition}

\newtheorem{example}[theorem]{Example}

\theoremstyle{remark}
\newtheorem{remark}[theorem]{Remark}
 
\title{\textbf{Conformal Prediction $=$ Bayes?}}
\author[1]{Jyotishka Datta}
\author[2]{Nicholas G. Polson}
\author[3]{Vadim Sokolov}
\author[4]{Daniel Zantedeschi}

\affil[1]{Department of Statistics, Virginia Tech, Blacksburg, VA 24061\\ \texttt{jyotishka@vt.edu}}
\affil[2]{Chicago Booth School of Business, University of Chicago, Chicago, IL 60637\\ \texttt{ngp@chicagobooth.edu}}
\affil[3]{Volgenau School of Engineering, George Mason University, Fairfax, VA 22030\\ \texttt{vsokolov@gmu.edu}}
\affil[4]{School of Information Systems, Muma College of Business, University of South Florida, Tampa, FL 33620\\ \texttt{danielz@usf.edu}}

\date{\today}

\begin{document}
\maketitle

\begin{abstract}
Conformal prediction (CP) is widely presented as distribution-free predictive inference with
finite-sample \emph{marginal} coverage under exchangeability. We argue that CP is best
understood as a rank-calibrated descendant of the Fisher--Dempster--Hill
fiducial/direct-probability tradition rather than as Bayesian conditioning in disguise.

We establish four separations from coherent $\sigma$-additive predictive semantics.
First, canonical conformal constructions violate conditional extensionality: prediction sets
can depend on the marginal design $P(X)$ even when $P(Y\mid X)$ is fixed. Second, any
finitely additive sequential extension preserving rank calibration is nonconglomerable,
implying countable Dutch-book vulnerabilities. Third, rank-calibrated updates cannot be
realized as regular conditionals of any $\sigma$-additive exchangeable law on
$\mathcal{Y}^\infty$. Fourth, formalizing both paradigms as families of one-step predictive
kernels, conformal and Bayesian kernels coincide only on a Baire-meagre subset of the space
of predictive laws.

We further show that rank- and proxy-based reductions are generically Blackwell-deficient
relative to full-data experiments, yielding positive Le~Cam deficiency for suitable losses.
Extending the analysis to \emph{prediction-powered inference} (PPI) yields an analogous
message: bias-corrected, proxy-rectified estimators can be valid as confidence devices while
failing to define transportable belief states across stages, shifts, or adaptive selection.
Together, the results sharpen a general limitation of \emph{wrappers}: finite-sample
calibration guarantees do not by themselves supply composable semantics for sequential
updating or downstream decision-making.
\end{abstract}

\noindent\textbf{Keywords:} conformal prediction; predictive inference; exchangeability;
extensionality; conglomerability; Bayesian prediction; predictive kernels; Blackwell
sufficiency; Le Cam deficiency; Baire category; prediction-powered inference; wrappers.
\newpage

\newpage

\newpage
\section{Introduction}
\label{sec:introduction}

Predictive inference in contemporary statistics and machine learning is driven by
two complementary desiderata. One is \emph{coherent probabilistic modeling}:
specify a data-generating mechanism, derive predictive distributions by
conditioning, and use the resulting predictive kernels as state variables for
decision-making. The other is \emph{distribution-free validity}: obtain
finite-sample guarantees that hold under minimal assumptions and remain reliable
under model misspecification. The success of flexible black-box predictors has
amplified demand for the second, making uncertainty quantification a first-order
design objective in learning pipelines.

Conformal prediction (CP) \citep{vovk2005algorithmic,lei2018distribution,angelopoulos2021gentle}
is the flagship method in this distribution-free agenda. Given exchangeable data
$(X_i,Y_i)_{i=1}^{n+1}$, CP constructs a set-valued predictor $C_n(X_{n+1})$
satisfying
\[
\Pr\{Y_{n+1}\in C_n(X_{n+1})\}\;\ge\;1-\alpha,
\]
uniformly over all exchangeable laws and sample sizes. Prediction-Powered
Inference (PPI) \citep{angelopoulos2023prediction,zrnic2024cross,angelopoulos2023ppipp}
addresses a related problem: given a pre-trained predictor, abundant unlabeled
data with proxy predictions, and a small labeled sample for bias correction,
construct valid confidence procedures for population targets without fully
specifying a likelihood. Both methods turn black-box prediction into frequentist
uncertainty quantification---and both, as we show, inherit structural tensions
when their outputs are treated as composable probabilistic states.

A natural question is whether these calibration devices can serve as surrogates
for Bayesian predictive inference. This question is not new. Rather than
revisiting it in purely interpretive terms, we formalize the distinction and
prove separation results that pinpoint where calibration-style validity
diverges from $\sigma$-additive predictive semantics. The underlying tension
between calibration-first constructions and coherent belief states runs through
a century of foundational statistics: Jeffreys' $A_2$ rule \citep{jeffreys1939theory},
contested by Fisher as lacking full probabilistic meaning; Fisher's own fiducial
pivots \citep{fisher1930inverse}, criticized for incoherence under composition;
Dempster's direct probabilities \citep{dempster1963direct}, assigning belief
without a $\sigma$-additive joint; Goldstein's temporal coherence
\citep{goldstein1985temporal}, clarifying what sequential consistency demands;
and the Lane--Sudderth critique \citep{lane1984coherent}, showing that
rank-calibrated rules are nonconglomerable and cannot arise as regular
conditionals of any $\sigma$-additive exchangeable law. CP and PPI inherit this
lineage.

We show that the identification is generically incorrect. The key distinction is
the \emph{type of object} produced. Bayesian prediction yields a sequential
family of predictive kernels---regular conditional distributions forming a
$\sigma$-additive state that supports conditioning, composition, and
decision-theoretic comparison. Conformal prediction yields a \emph{set-valued
rule} with a marginal coverage guarantee; it becomes a distribution only after
an additional kernelization choice. The difference matters when uncertainty must
be transported or updated, and the historical record shows it has been
rediscovered at every stage of the calibration-versus-coherence debate.

\subsubsection*{Implication for practice: wrappers as guardrails, and why this can fail under transport}
For practitioners, the distinction is operational.
If a pipeline only needs a \emph{final-stage validity certificate} (e.g., ``do not
ship a prediction without marginal $1-\alpha$ coverage under the current
exchangeable data stream''), then CP- or PPI-style wrappers are appropriate
guardrails.
If a pipeline needs an \emph{uncertainty state} that will be reused, transported
across covariate shift, conditioned on new side information, optimized inside a
controller, or composed over time, then a set-valued certificate is not a
substitute for a predictive kernel.
In such workflows, place wrappers as terminal constraints on top of a
$\sigma$-additive predictive model (Bayesian or frequentist), and carry forward
the kernel as the object that is updated and compared decision-theoretically.

The reason is that ``distribution-free'' validity is often achieved through
\emph{design-dependent calibration}. In split conformal, calibration pools a
score distribution over the realized covariate marginal; under heteroskedastic
(or more generally covariate-dependent) score laws, the resulting prediction set
depends on $P(X)$ even when $P(Y\mid X)$ is fixed, violating conditional
extensionality (Theorem~\ref{thm:extensionality}). This dependence is exactly
what breaks naive transport, and it is the entry point to the broader
separations in the paper (sequential coherence, $\sigma$-additive realizability,
topological genericity, and experiment-theoretic information loss).

\begin{example}[Transport mismatch in a common two-stage deployment pipeline]
\label{ex:transport_pipeline}
\textit{Stage 1 (source calibration).}
A team trains a regressor and calibrates a split conformal interval under a
source environment $P_S$, producing a rule $x\mapsto C_S(x)$ intended to satisfy
$P_S\{Y\in C_S(X)\}\ge 1-\alpha$.

\textit{Stage 2 (target deployment under shifted design).}
The same model is deployed under a target environment $P_T$ with the same
conditional law $P_T(Y\mid X)=P_S(Y\mid X)$ but a different covariate marginal
$P_T(X)\neq P_S(X)$ (e.g., a new market, a different sensor regime, or a policy
change that shifts which $x$ values are encountered).

\textit{Mechanism.}
If the score distribution varies with $x$ (as in heteroskedastic regression),
then the calibration quantile used to form $C_S(\cdot)$ is a mixture over $P_S(X)$.
Reweighting to $P_T(X)$ changes the relevant mixture, so the same set-valued rule
need not retain its typical performance profile under $P_T$; in particular,
the interval geometry can be systematically mis-tuned to the region where $P_T$
concentrates.

\textit{Interpretation.}
The wrapper remains a valid \emph{source-law certificate}, but it is not an
extensional object determined by $P(Y\mid X)$ alone and therefore cannot be
treated as a transportable uncertainty state without additional structure.
\end{example}
\subsection{Main contributions}
\label{subsec:main_contributions}

We compare conformal prediction (CP) with coherent predictive inference along
four axes: (i) object type (random sets vs.\ predictive kernels),
(ii) extensionality (dependence on $P(Y\mid X)$ vs.\ sensitivity to $P(X)$),
(iii) sequential coherence (conglomerability/updateability), and
(iv) experiment comparison (Blackwell/Le~Cam deficiency).

\begin{enumerate}[leftmargin=*,nosep]

\item \textbf{Direct-probability template (fiducial lineage).}
In the no-covariate case, conformal rank inversion reproduces the
order-statistic partition underlying Hill's direct-probability $A_{(n)}$
predictors, placing CP in a classical measure-theoretic lineage and giving a
concrete coherence benchmark.

\item \textbf{Non-extensionality in common conformal pipelines.}
We construct joint laws $P_1,P_2$ with the same conditional distribution
$P_1(Y\in\cdot\mid X)=P_2(Y\in\cdot\mid X)$ but different marginals
$P_1(X)\neq P_2(X)$ such that split conformal prediction sets differ with
positive probability (Theorem~\ref{thm:extensionality}). This formalizes
transport sensitivity under covariate shift even when $P(Y\mid X)$ is stable.

\item \textbf{Sequential incoherence and non-extendability.}
Promoting rank-calibrated one-step assessments to sequential objects yields
nonconglomerability (hence Dutch-book vulnerabilities) in the Lane--Sudderth
sense (Theorem~\ref{thm:conglomerability}). Moreover, the canonical no-covariate
rank-calibration constraints cannot arise as regular conditionals of any
$\sigma$-additive exchangeable law on $\mathcal Y^\infty$
(Theorem~\ref{thm:no_joint}), delimiting when ``updating'' conformal outputs can
be interpreted as conditioning in a single joint model.

\item \textbf{Topological smallness of CP--Bayes coincidence.}
Representing both paradigms as families of one-step predictive kernels, we show
that the set of predictive laws on which a fixed kernelized conformal mechanism
coincides with a Bayesian predictive rule is Baire-meagre in an appropriate
kernel space (Theorem~\ref{thm:meagre}); in this precise sense, ``CP equals
Bayes'' is structurally exceptional.

\item \textbf{Experiment-theoretic separation.}
Using Blackwell sufficiency and Le~Cam's comparison of experiments, we prove a
strict information gap: rank-/proxy-based reductions are generically
Blackwell-inferior to the full experiment, yielding positive deficiency for
suitable bounded decision problems (Theorem~\ref{thm:lecam}).

\end{enumerate}

Overall, CP provides finite-sample marginal coverage under exchangeability, but
its induced objects need not behave like $\sigma$-additive posterior predictives
supporting tower-consistent updating and decision-theoretic optimality. The
conclusion is not that CP is invalid, but that CP and Bayesian prediction inhabit
different foundational regimes unless additional structure is imposed.

Table~\ref{tab:dependency_map} summarizes logical dependencies among the
separations; Table~\ref{tab:so_what} links each separation to practice-facing
failure modes and conservative mitigations.

\subsection{Implications and roadmap}
\label{subsec:roadmap}

The implications matter whenever uncertainty objects must \emph{compose} or
\emph{transport}: sequential decision-making, reinforcement learning, causal
policy evaluation under shift, and multi-stage pipelines that treat predictive
uncertainty as a state variable. In such settings, calibrated wrappers (CP and
PPI-style corrections) are safest as \emph{final-stage guardrails}: they enforce
finite-sample validity but do not automatically supply the $\sigma$-additive,
tower-consistent conditioning structure needed for belief semantics and
sequential updateability. When a transportable probabilistic state is required,
it must be carried by $\sigma$-additive models (Bayesian or frequentist) whose
one-step predictives are regular conditionals of a joint law.

\subsubsection*{Genealogy}
Two traditions underwrite modern predictive inference. One branch treats
prediction as regular conditional distributions under a single $\sigma$-additive
law (Kolmogorov $\to$ Wald $\to$ Blackwell/Le~Cam); under exchangeability this
yields de~Finetti/Hewitt--Savage and Bayesian posterior prediction as canonical
form. The other branch treats calibration/direct probability as primitive
(Jeffreys/Fisher/Dempster/Hill; Dawid; Shafer--Vovk), prioritizing finite-sample
validity certificates. Conformal prediction belongs to this second branch; our
separations show that upgrading such certificates into $\sigma$-additive,
sequentially coherent belief states generically fails.

\paragraph{Paper organization.}
Section~\ref{sec:foundations} reviews coherence principles for predictive
inference: exchangeability and de~Finetti representation, conditional
extensionality, conglomerability, and Le~Cam comparison of experiments.
Section~\ref{sec:fiducial} revisits fiducial/direct-probability prediction,
emphasizing Dempster-style direct probabilities, Hill's $A_{(n)}$ recursion, and
the Lane--Sudderth critique.
Section~\ref{sec:cp} places CP in this template and proves the sequential
nonconglomerability and non-$\sigma$-additive extendability results.
Section~\ref{sec:impossibility} collects the main separations (non-extensionality,
Baire-meagre CP--Bayes coincidence, and positive deficiency) and connects them to
practice via Tables~\ref{tab:so_what} and~\ref{tab:dependency_map}.
Section~\ref{sec:ppi} extends the analysis to PPI.
Section~\ref{sec:discussion} concludes with implications for learning pipelines
and directions for hybrid Bayesian--CP architectures.
Appendix~\ref{app:proofs} contains extended proofs; Appendix~\ref{app:counterexamples}
collects supplementary counterexamples; Appendix~\ref{app:ppi} provides additional
PPI material.

\section{Foundations of Predictive Inference}
\label{sec:foundations}

We assemble four structural principles that repeatedly govern what can and
cannot count as a predictive ``state'' in sequential problems:
(i) \emph{exchangeability} and representation (de Finetti; Hewitt--Savage),
(ii) \emph{likelihood relevance} of predictive rules (conditional extensionality),
(iii) \emph{conglomerability} as the minimal stability requirement when one
relaxes $\sigma$-additivity, and (iv) \emph{experiment comparison} in Le Cam's
decision-theoretic framework.  This section fixes notation and the coherence
benchmarks used throughout: later sections contrast Bayesian posterior
predictives, regular conditionals under a countably additive joint law, with
rank-calibrated wrappers such as conformal prediction.

\subsection{de Finetti, exchangeability, and predictive representation}
\label{subsec:definetti}

Let $(Y_1,Y_2,\ldots)$ be an infinite sequence taking values in a Polish space
$\mathcal Y$ equipped with its Borel $\sigma$-algebra.  The sequence is
\emph{exchangeable} if for every $n\ge1$ and every permutation $\pi$ of
$\{1,\ldots,n\}$,
\[
\Prob(Y_1\in B_1,\ldots,Y_n\in B_n)\;=\;
\Prob(Y_{\pi(1)}\in B_1,\ldots,Y_{\pi(n)}\in B_n),
\]
for all measurable $B_1,\ldots,B_n\in\mathcal B(\mathcal Y)$.

\begin{theorem}[de Finetti representation]
\label{thm:definetti}
If $(Y_i)$ is exchangeable and the induced law on $\mathcal Y^\infty$ is
$\sigma$-additive, then there exists a probability measure $Q$ on
$\mathcal P(\mathcal Y)$ such that, for all $n$,
\[
\Prob(Y_1\in B_1,\ldots,Y_n\in B_n)
\;=\;\int_{\mathcal P(\mathcal Y)} \mu(B_1)\cdots \mu(B_n)\,Q(d\mu).
\]
\end{theorem}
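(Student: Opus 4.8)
The plan is to prove the representation via the empirical-measure / reverse-martingale route (the Hewitt--Savage strengthening of de~Finetti to Polish spaces), identifying $Q$ as the law of the almost-sure limit of the empirical distributions. Since $\mathcal Y$ is Polish, the space $\mathcal P(\mathcal Y)$ of Borel probability measures, equipped with the topology of weak convergence, is again Polish, so it carries a well-defined Borel $\sigma$-algebra on which to place $Q$; this is what makes the target object meaningful and is where the topological hypothesis enters.

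First I would introduce the empirical measures $L_n\defeq\frac1n\sum_{i=1}^n\delta_{Y_i}$ together with the decreasing family of symmetric (exchangeable) $\sigma$-algebras $\mathcal E_n$ generated by events invariant under permutations of the first $n$ coordinates, with tail field $\mathcal E\defeq\bigcap_n\mathcal E_n$. For any bounded measurable $f$, exchangeability forces $\E[f(Y_i)\mid\mathcal E_n]$ to be independent of $i\le n$, hence equal to $\int f\,dL_n=\frac1n\sum_{i=1}^n f(Y_i)$, which is itself $\mathcal E_n$-measurable. A one-line check then gives $\E[L_n(f)\mid\mathcal E_{n+1}]=L_{n+1}(f)$, exhibiting $(L_n(f))_n$ as a reverse martingale; the reverse martingale convergence theorem yields almost-sure and $L^1$ convergence to $\E[f(Y_1)\mid\mathcal E]$.

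Next I would upgrade this scalar convergence to weak convergence of $L_n$ toward a random measure. Fixing a countable convergence-determining class $\{f_k\}\subset C_b(\mathcal Y)$ (available because $\mathcal Y$ is Polish), the a.s. convergence holds simultaneously over all $k$ on a single full-probability event; a tightness and regularity argument on the Polish space then identifies a $\mathcal P(\mathcal Y)$-valued, $\mathcal E$-measurable limit $\mu_\infty$ with $L_n\Rightarrow\mu_\infty$ a.s. The crux is the conditional i.i.d.\ structure: for bounded continuous $f_1,\dots,f_k$ I must show $\E[\prod_{j}f_j(Y_j)\mid\mathcal E]=\prod_j\mu_\infty(f_j)$ a.s. I would prove this by the standard sampling-without-replacement estimate, noting that for distinct indices the finite-$n$ conditional expectation equals the corresponding falling-factorial $U$-statistic in $L_n$, which differs from $\prod_j L_n(f_j)$ by an $O(1/n)$ term quantifying the gap between sampling with and without replacement, and then passing to the limit.

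Finally, setting $Q\defeq\mathrm{Law}(\mu_\infty)$, the pushforward of $\Prob$ under $\omega\mapsto\mu_\infty(\omega)$, the tower property gives $\Prob(Y_1\in B_1,\dots,Y_n\in B_n)=\E[\prod_i\mu_\infty(B_i)]=\int_{\mathcal P(\mathcal Y)}\prod_i\mu(B_i)\,Q(d\mu)$, after approximating each indicator $\mathbbm 1_{B_i}$ by bounded continuous functions using outer regularity of Borel measures on Polish spaces. I expect the main obstacle to be the third step: assembling the scalar limits into a bona fide $\mathcal P(\mathcal Y)$-valued random variable (the tightness and joint measurability of $\mu_\infty$) and handling the with/without-replacement combinatorics cleanly. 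This is precisely where Polishness is indispensable and where a naive moment computation valid for $\{0,1\}$-valued data does not transfer verbatim.
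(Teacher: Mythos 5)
The paper does not prove this statement: Theorem~\ref{thm:definetti} is stated as classical background (de~Finetti/Hewitt--Savage) with no argument supplied, so there is no in-paper proof to compare against. Your sketch is the standard modern proof via reverse martingales and empirical measures, and it is essentially correct: the identification $\E[f(Y_i)\mid\mathcal E_n]=L_n(f)$ for $i\le n$ follows from invariance of $\mathcal E_n$-events under transpositions of the first $n$ coordinates, the reverse-martingale convergence step is routine, the falling-factorial $U$-statistic comparison of sampling with and without replacement gives the conditional product structure with an $O(1/n)$ error for bounded test functions, and the tower property then delivers the mixture formula with $Q=\mathrm{Law}(\mu_\infty)$.

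Two remarks on the step you correctly flag as delicate. First, rather than running a from-scratch tightness argument to build the $\mathcal P(\mathcal Y)$-valued limit, it is cleaner to \emph{define} $\mu_\infty$ as a regular conditional distribution of $Y_1$ given $\mathcal E=\bigcap_n\mathcal E_n$ (which exists because $\mathcal Y$ is standard Borel, cf.\ Lemma~\ref{lem:disintegration} in the paper's appendix); the scalar reverse-martingale limits then say $L_n(f_k)\to\mu_\infty(f_k)$ a.s.\ for every $k$ in your countable convergence-determining class, which immediately yields $L_n\Rightarrow\mu_\infty$ a.s.\ and gives joint measurability of $\omega\mapsto\mu_\infty(\omega)$ for free. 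Second, for the passage from bounded continuous $f_j$ to indicators $\mathbbm 1_{B_j}$ in the final display, a functional monotone-class argument applied coordinate-by-coordinate is tidier than approximating each indicator by continuous functions, though your regularity-based approximation also works on a Polish space. Neither point is a gap; both are places where the bookkeeping must be done explicitly for the argument to be complete.
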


For prediction, the key consequence is that any \emph{countably additive}
exchangeable law admits regular conditional predictives of the form
\[
\Prob(Y_{n+1}\in B\mid Y_{1:n})
\;=\;\int_{\mathcal P(\mathcal Y)} \mu(B)\,Q(d\mu\mid Y_{1:n}),
\]
so coherent exchangeable prediction is equivalent to Bayesian updating on an
unknown law $\mu$.  In this sense, ``Bayes'' is not merely a parametric device:
under exchangeability and $\sigma$-additivity, Bayesian prediction is the
canonical representation of sequentially coherent prediction.

\subsubsection*{Coherent forecasting and temporal coherence (de Finetti--Goldstein)}

De Finetti's foundation treats probability as a \emph{prevision}: a coherent
pricing functional for gambles, with coherence defined by avoidance of Dutch
books. In sequential problems, the primitive objects are one-step forecasting
assessments (conditional previsions) for $Y_{n+1}$ given $Y_{1:n}$. Goldstein's
temporal coherence principle \citep{goldstein1985temporal} requires compatibility
across time: present assessments must agree with the present prevision of future
assessments, reducing (for precise probabilities) to an iterated-prevision/tower
constraint. Rank-calibrated procedures may satisfy marginal calibration while
failing this stronger temporal representability.

Goldstein's \emph{Bayes Linear} framework \citep{goldstein2007bayes}
operationalizes coherent updating without full distributional specification.
Given prior expectations $\mathbb{E}[Y]$, $\mathbb{E}[X]$ and a covariance
structure, observing $X$ yields the adjusted prevision
\[
\mathbb{E}_X[Y]
= \mathbb{E}[Y] + \mathrm{Cov}(Y,X)\,\mathrm{Var}(X)^{-1}\bigl(X-\mathbb{E}[X]\bigr).
\]
This is a variance-weighted linear correction, and it is temporally coherent
within the Bayes Linear belief structure.

The structural parallel to Prediction-Powered Inference is direct. PPI's
rectified estimator takes the form
\[
\hat{\theta}_{\mathrm{PPI}}
= \hat{\theta}_{\mathrm{proxy}}
+ \bigl(\hat{\theta}_{\mathrm{labeled}} - \hat{\theta}_{\mathrm{proxy,labeled}}\bigr),
\]
where $\hat{\theta}_{\mathrm{proxy}}$ is computed from abundant unlabeled data
via a black-box predictor, and the parenthetical term is a bias correction
estimated from a small labeled sample. In both cases, cheap auxiliary
information is combined with expensive ground-truth observations via an
explicit correction term.

The distinction is foundational. Goldstein's adjustment is a \emph{belief
update}: the covariance $\mathrm{Cov}(Y,X)$ is a commitment that propagates
consistently under further conditioning, so $\mathbb{E}_X[Y]$ composes as a
conditional prevision through time. PPI's correction, by contrast, is a
\emph{frequentist calibration device}: it guarantees valid inference for
$\theta$, but it is not derived from conditioning within a $\sigma$-additive
joint probabilistic model. Consequently, when PPI outputs are composed across
stages or transported across distributions, coherence gaps can reappear (e.g.,
non-extensional behavior under covariate shift and the absence of a joint law
underwriting sequential updates).

This comparison isolates the broader theme: linear-looking corrections may
share algebraic form while inhabiting different regimes. Bayes Linear provides
\emph{semantics}---a partial but coherent conditional belief state---whereas PPI
provides \emph{guarantees}---coverage and error control without an underlying
probability state. The tensions identified in Section~\ref{sec:ppi} are exactly
the gap between what is guaranteed and what is represented.

\subsubsection*{Predictive sufficiency}
A complementary viewpoint emphasizes prediction directly rather than
parameterization.  A statistic $T_n=T(Y_{1:n})$ is \emph{predictively sufficient}
if
\[
\Prob(Y_{n+1}\in B\mid Y_{1:n})=\Prob(Y_{n+1}\in B\mid T_n)
\quad\text{for all measurable }B\in\mathcal B(\mathcal Y).
\]
Under Bayesian models, sufficiency (or asymptotic sufficiency) yields predictive
dimension reduction.  In nonparametric Bayes, predictive sufficiency and
exchangeability connect to Polya--urn schemes and Dirichlet process priors, and
in some settings the prior can be reconstructed from predictive recursion.
This ``prediction-first'' perspective also motivates the direct-probability and
fiducial traditions reviewed in Section~\ref{sec:fiducial}.

\subsection{Likelihood principles and extensionality}
\label{subsec:extensionality}

A central coherence principle in statistical inference is that predictions
should depend on data only through information relevant for the distribution of
future observations.  In parametric experiments this is formalized via
likelihood/sufficiency principles; with covariates it becomes a requirement
that the rule depend on the conditional law $P(Y\mid X)$ rather than ancillary
features of the design.

\begin{definition}[Conditional extensionality: likelihood relevance]
\label{def:extensionality}
Consider a predictive rule (set-valued predictor or kernel) constructed from
$(X_1,Y_1),\ldots,(X_n,Y_n)$ and a test covariate $X_{n+1}=x$.
The rule is \emph{(conditionally) extensional} if, whenever two joint laws
$P_1$ and $P_2$ satisfy
\[
P_1(Y\in\cdot\mid X)=P_2(Y\in\cdot\mid X)
\quad\text{$P_1$-a.s.\ and $P_2$-a.s.,}
\]
the induced predictive outputs agree (up to $P_1$- and $P_2$-null sets) as
measurable functions of $(x_{1:n},y_{1:n},x)$.
\end{definition}

Extensionality is automatic for Bayesian posterior predictives under correctly
specified conditional models and for procedures that use only conditional
likelihood information.  It can fail, however, when construction depends on the
marginal design distribution $P(X)$ through learning, tuning, regularization, or
calibration, one mechanism by which common conformal pipelines depart from
likelihood-based prediction (Section~\ref{sec:cp}).

\subsection{Finite additivity and conglomerability}
\label{subsec:conglomerability}

A finitely additive probability measure satisfies $P(A\cup B)=P(A)+P(B)$ for
disjoint $A,B$, but may fail countable additivity.  Many direct-probability
predictive rules can be represented as finitely additive assessments on
$\mathcal Y^\infty$ even when no $\sigma$-additive extension exists.  A key
distinction within finite additivity is \emph{conglomerability}, which governs
compatibility between unconditional and conditional assessments.

\begin{definition}[Conglomerability {\citep{heath1978coherent}}]
\label{def:conglomerable}
Let $\{A_i:i\in I\}$ be a measurable partition with $\Prob(A_i)>0$.
A finitely additive probability measure $P$ is \emph{conglomerable} with
respect to this partition if for every event $B$,
\[
\inf_{i\in I} P(B\mid A_i)\;\le\;P(B)\;\le\;\sup_{i\in I} P(B\mid A_i).
\]
\end{definition}

All $\sigma$-additive measures are conglomerable, but not conversely.  For
prediction, conglomerability is a minimal requirement for stable sequential
conditioning: if it fails, conditional assessments can all favor a bet while
the unconditional assessment does not, enabling a Dutch book.

\begin{theorem}[Heath--Sudderth]
\label{thm:heath_sudderth}
If a finitely additive probability measure fails conglomerability with respect
to some partition, then it admits a Dutch book: there exists a countable
collection of bets, each favorable under the corresponding conditional
assessments, that guarantees a sure loss overall.
\end{theorem}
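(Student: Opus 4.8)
The plan is to reduce to a single failure direction and then exhibit an explicit system of called-off conditional bets whose aggregate payoff reduces to a uniform negative constant. Failure of conglomerability means one of the two displayed inequalities in Definition~\ref{def:conglomerable} is violated. I would first argue the two cases are interchangeable: if $P(B) > \sup_i P(B\mid A_i)$, then since $P(B^c\mid A_i)=1-P(B\mid A_i)$ and $P(B^c)=1-P(B)$, the complementary event $B^c$ satisfies $\inf_i P(B^c\mid A_i) > P(B^c)$. So it suffices to treat the case $\inf_i P(B\mid A_i) > P(B)$, and I fix $\epsilon \defeq \inf_i P(B\mid A_i) - P(B) > 0$, so that $P(B\mid A_i)\ge P(B)+\epsilon$ for every $i$.

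Next I would construct the book. For a small tilt $\delta\in(0,\epsilon)$, assign to each cell $A_i$ the \emph{called-off} conditional gamble
\[
f_i \;=\; \mathbbm{1}_{A_i}\bigl(\mathbbm{1}_B - P(B\mid A_i) + \delta\bigr),
\]
which pays nothing off $A_i$ and, conditionally on $A_i$, has prevision $P(B\mid A_i)-P(B\mid A_i)+\delta=\delta>0$; thus each $f_i$ is strictly favorable under the assessment $P(\cdot\mid A_i)$. Add the single unconditional gamble $g = P(B) - \mathbbm{1}_B$, which is fair under $P$ (prevision $0$) and hence also acceptable. Because $\{A_i\}$ is a partition, every $\omega$ lies in exactly one cell $A_{i(\omega)}$, so all but one of the $f_i$ vanish at $\omega$ and the aggregate payoff collapses to
\[
\sum_i f_i(\omega) + g(\omega)
= \bigl(\mathbbm{1}_B(\omega) - P(B\mid A_{i(\omega)}) + \delta\bigr) + \bigl(P(B)-\mathbbm{1}_B(\omega)\bigr)
= P(B) - P(B\mid A_{i(\omega)}) + \delta \;\le\; -(\epsilon-\delta) < 0 .
\]
The net position therefore guarantees a sure loss of at least $\epsilon-\delta$, uniformly in $\omega$, while each constituent bet was individually favorable (the conditional ones) or fair (the unconditional one); a further arbitrarily small tilt of $g$ makes every bet strictly favorable. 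This is exactly the Dutch book claimed.

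The step I expect to be delicate is the bookkeeping that makes this a legitimate \emph{countable} collection of simultaneously placed bets and that justifies aggregating their previsions under finite additivity. When the partition $\{A_i\}$ is countable (the case relevant to the order-statistic and urn partitions invoked later), the family $\{f_i\}_i\cup\{g\}$ is already countable and the pointwise identity above is unambiguous, since at each $\omega$ the sum has a single nonzero conditional term. The genuine subtlety is that finite additivity does not grant countable additivity of the pricing functional, so I would not appeal to $\sigma$-additivity of the aggregate prevision; instead the whole argument rests only on the \emph{pointwise} payoff identity, which holds cell-by-cell and requires no interchange of limit and integral. If one allows an uncountable partition, the honest move is to restrict to a countable subfamily of cells carrying full mass (so that $g$ is not left ``uncovered'' on a positive-probability remainder); flagging that reduction---or simply stating the result for countable partitions---is the cleanest way to keep the book countable without smuggling in countable additivity.
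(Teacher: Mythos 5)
Your construction is correct and is exactly the canonical Heath--Sudderth called-off-bet scheme; the paper does not prove this theorem itself but states it as a classical result and, in Appendix~\ref{app:lane_sudderth}, simply invokes ``the standard Heath--Sudderth countable betting scheme,'' which is precisely what you have written out. One small simplification: under Definition~\ref{def:conglomerable} every cell satisfies $P(A_i)>0$, and finite additivity already forces such a family to be at most countable (at most $n$ disjoint cells can have mass exceeding $1/n$), so your worry about uncountable partitions never arises and the book is automatically countable.
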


This criterion will be central later.  Classical fiducial and direct-probability
predictive rules (e.g., Hill's $A_{(n)}$ rule) are exchangeable and finitely
additive, yet can fail conglomerability \citep{lane1975coherence, hill1980paradoxes}.
Conformal prediction inherits a closely related rank-calibration structure; the
question is therefore not whether CP achieves marginal coverage, it does by
construction, but whether the induced predictive assessments behave coherently
under conditioning and sequential use.

\subsection{From fiducial prediction to conformal calibration}
\label{subsec:fiducial_vs_cp}

It is useful to separate two ideas that are often conflated: producing a
\emph{predictive distribution} (kernel) versus producing a \emph{coverage
calibration device}.

\subsubsection*{Fiducial/direct-probability prediction.}
Fiducial and direct-probability approaches aim to assign probabilities to
future observations without specifying a prior on parameters or on the sampling
law.  In favorable cases (e.g.\ translation families) fiducial predictives
coincide with Bayesian posterior predictives (cf.\ the classical observations
attributed to Lindley; see \citealt{jaynes2003probability} for discussion).  In
general, however, both interpretation and repeated-sampling behavior can be
subtle and model dependent (including in generalized fiducial inference).  A
recurring theme is that exchangeability plus predictive sufficiency can motivate
dimension-reduced predictive rules without explicit parameterization
(Section~\ref{sec:fiducial}).

\subsubsection*{Conformal prediction}
Conformal prediction is conceptually different: it is a \emph{rank-calibration}
scheme that enforces finite-sample marginal coverage under exchangeability by
construction, via a discrete calibration step applied to nonconformity scores.
This guarantee is distribution-free and finite-sample, but it is not, in
general, representable as Bayesian posterior predictive updating.  Much of this
paper clarifies what is gained, and what is lost, when CP outputs are treated
as if they were coherent predictive probabilities.

\subsubsection*{Quantile regression and predictive functionals}
Since prediction sets are often summarized by quantiles, it is helpful to
distinguish conformal calibration from methods that directly model conditional
quantiles, such as quantile regression.  Quantile regression targets
$Q_\tau(Y\mid X=x)$ across $\tau\in(0,1)$ and is particularly useful under
heteroskedasticity or non-Gaussian conditional laws.  Conformal wrappers can
enforce coverage around quantile models; our analysis isolates the foundational
status of the calibration step itself.

\subsection{Sequential kernels and the Kolmogorov extension viewpoint}
\label{subsec:kolmogorov_kernels}

Later separation results require a precise language for ``sequential
realizability.''  A (one-step) predictive kernel is a measurable map
$K_n:\mathcal Y^n\times\mathcal B(\mathcal Y)\to[0,1]$ such that
$B\mapsto K_n(y_{1:n},B)$ is a probability measure on $\mathcal Y$ for each
$y_{1:n}$ and $y_{1:n}\mapsto K_n(y_{1:n},B)$ is measurable for each $B$.
A family $\{K_n\}_{n\ge 1}$ can be interpreted as candidate regular conditionals
for $Y_{n+1}$ given $Y_{1:n}$.

When $\{K_n\}$ arises from a $\sigma$-additive law on $\mathcal Y^\infty$,
the kernels satisfy strong consistency constraints.  Under mild measurability
conditions, the Ionescu--Tulcea/Kolmogorov extension machinery constructs a
unique $\sigma$-additive measure on path space from an initial law and a
sequence of kernels.  Conversely, showing that a proposed family of one-step
rules cannot be realized as regular conditionals of any $\sigma$-additive joint
law is a direct route to ``no coherent joint distribution'' results of the type
proved later for rank-calibrated predictors.

\subsection{Le Cam comparison of experiments}
\label{subsec:lecam}

Le Cam's theory provides a decision-theoretic language for comparing procedures
via the information content of statistical experiments
\citep{lecam1986asymptotic, lecam2000asymptotics}.  An experiment is a triple
$\mathcal E=(\mathcal X,\mathcal A,\{P_\theta:\theta\in\Theta\})$.  The
deficiency $\delta(\mathcal E,\mathcal F)$ quantifies how well procedures based
on $\mathcal F$ can be simulated (in risk) by procedures based on $\mathcal E$
uniformly over decision problems.

\begin{definition}[Deficiency]
\label{def:deficiency}
Let $\mathcal E$ and $\mathcal F$ be experiments.  The deficiency
$\delta(\mathcal E,\mathcal F)$ is the infimum of $\epsilon\ge 0$ such that for
every decision problem and every procedure based on $\mathcal F$, there exists
a procedure based on $\mathcal E$ whose risk is within $\epsilon$ uniformly in
$\theta$.
\end{definition}

A central asymptotic insight is that regular experiments converge, in
deficiency distance, to Gaussian shift experiments under local asymptotic
normality (LAN).

\begin{theorem}[schematic, LAN \cite{lecam1986asymptotic, lecam2000asymptotics}]
\label{thm:lecam_lan}
For a LAN sequence of experiments $\{\mathcal E_n\}$, $\mathcal E_n$ converges
in deficiency to an appropriate Gaussian experiment with Fisher information as
its information metric.
\end{theorem}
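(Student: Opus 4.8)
The plan is to prove the claim through Le~Cam's characterization of deficiency in terms of the distributions of likelihood-ratio processes, reducing the abstract convergence statement to a central-limit computation. Working in the local parametrization $\mathcal E_n=\bigl(\mathcal X_n,\mathcal A_n,\{P_{n,h}:h\in\R^k\}\bigr)$ with $P_{n,h}=P_{\theta_0+h/\sqrt n}$ and base point $h=0$, I would first record the defining LAN expansion
\[
\log\frac{dP_{n,h}}{dP_{n,0}}=h^\top\Delta_n-\tfrac12 h^\top I(\theta_0)h+o_{P_{n,0}}(1),
\qquad \Delta_n\ \xrightarrow{\ d\ }\ N\bigl(0,I(\theta_0)\bigr)\ \text{under }P_{n,0}.
\]
The key structural fact I would invoke is that an experiment is determined, up to Le~Cam equivalence, by its \emph{standard measure}: the joint law under $P_{n,0}$ of the finite families of likelihood ratios $\bigl(dP_{n,h_1}/dP_{n,0},\dots,dP_{n,h_m}/dP_{n,0}\bigr)$. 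Convergence of these finite-dimensional laws (weak convergence of experiments) is the natural target, and the deficiency $\delta$ of Definition~\ref{def:deficiency} is governed by it.

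Second, I would pass to the limit in these finite-dimensional distributions. Abbreviating $I\defeq I(\theta_0)$ and applying the continuous mapping theorem to the LAN expansion, under $P_{n,0}$ the vector of log-ratios converges in distribution to $\bigl(h_j^\top Z-\tfrac12 h_j^\top I h_j\bigr)_{j\le m}$ with $Z\sim N(0,I)$. I would then take the canonical Gaussian shift $\mathcal G=\bigl(\R^k,\mathcal B,\{N(h,I^{-1}):h\in\R^k\}\bigr)$ and compute its log-likelihood ratios at base point $0$: letting $Z$ denote the observation, a direct Gaussian calculation gives $\log\frac{dN(h,I^{-1})}{dN(0,I^{-1})}=h^\top\Delta-\tfrac12 h^\top I h$ with $\Delta\defeq IZ\sim N(0,I)$ under $P_0$. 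These coincide with the limiting laws just obtained, so the finite-dimensional likelihood-ratio distributions of $\mathcal E_n$ converge to those of $\mathcal G$; the covariance $I$ of the linear term is exactly the Fisher information that furnishes the information metric of the limit experiment.

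Third, I would upgrade weak convergence to convergence in deficiency, which needs two further ingredients. Contiguity of $\{P_{n,h}\}$ and $\{P_{n,0}\}$ follows from the LAN expansion via Le~Cam's first lemma (the limit log-ratio has mean $-\tfrac12 h^\top I h$ and unit exponential expectation), guaranteeing that no likelihood-ratio mass escapes to $0$ or $\infty$ and that convergence computed under $P_{n,0}$ transfers to the shifted measures. The randomization (Blackwell--Sherman--Stein) criterion then converts convergence of standard measures into approximating Markov kernels whose risk deficiency vanishes, giving $\delta(\mathcal E_n,\mathcal G)\to0$ and $\delta(\mathcal G,\mathcal E_n)\to0$ on each finite sub-parameter set. \textbf{The main obstacle is the uniformity step}: finite-dimensional convergence and deficiency coincide only on a \emph{fixed finite} parameter set, whereas the conclusion ranges over all of $\R^k$. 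Passing from finite sub-experiments to the full local model requires controlling the approximating randomizations uniformly in $h$, which does not follow from finite-dimensional convergence alone. I would handle this by exploiting the tightness supplied by contiguity together with the continuity and translation structure of the Gaussian limit, establishing a deficiency bound that is uniform over compact $h$-sets and then invoking the standard exhaustion of $\R^k$ by compacta that is implicit in the ``schematic'' qualifier of the statement.
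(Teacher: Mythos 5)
The paper offers no proof of this statement: it is labelled ``schematic'' and cited directly to \citet{lecam1986asymptotic,lecam2000asymptotics}, so there is no internal argument to compare yours against. Your sketch is, in substance, the canonical proof from those references: LAN expansion $\to$ convergence of the finite-dimensional laws of likelihood ratios (standard measures) $\to$ identification of the limit with the Gaussian shift $\{N(h,I^{-1})\}$ via the matching log-likelihood-ratio representation $h^\top\Delta-\tfrac12 h^\top I h$ $\to$ contiguity by Le~Cam's first lemma $\to$ the randomization/Blackwell--Sherman--Stein criterion to convert weak convergence of experiments into vanishing deficiency. The Gaussian computation identifying $\Delta=IZ\sim N(0,I)$ under the base measure is correct, and the covariance of the linear term is indeed where the Fisher information enters as the metric of the limit experiment.

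One caveat on your final step, which you partly flag yourself: deficiency over all of $\R^k$ does \emph{not} follow from deficiency over each compact, and ``exhaustion by compacta'' alone will not close that gap --- the approximating randomizations can degrade as the compacts grow. The standard resolution is that the theorem is stated either as weak convergence of experiments (finite subsets of $h$) or as $\Delta$-convergence restricted to compact local parameter sets, which is what the asymptotic theory actually uses (local asymptotic minimax, convolution theorems). Since the statement in the paper is explicitly schematic and does not commit to the mode of convergence, your proposal is an acceptable and faithful reconstruction of the intended result; just be aware that the unqualified claim ``converges in deficiency'' over the whole local parameter space is stronger than what the LAN hypothesis delivers without additional uniformity assumptions.
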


This provides a unifying benchmark: procedures that are limits of regular
experiments can be compared in a common asymptotic experiment, whereas
procedures with discontinuous, rank-calibrated behavior may fall outside the
LAN/regularity regime.  Our later deficiency results use this perspective to
formalize a separation between conformal-style predictive rules and coherent
Bayesian prediction in suitable decision problems.

\section{Fiducial and Direct Probability Predictors}
\label{sec:fiducial}

We review the fiducial and direct-probability tradition as a historical and
conceptual template for understanding conformal prediction.  The point is not
antiquarian: many coherence questions that surround CP are structurally the same
as those raised, and partially resolved, in mid-to-late 20th century debates
on fiducial prediction.

\subsection{Fisher's fiducial argument}
\label{subsec:fisher_fiducial}

Fisher's fiducial argument \citep{fisher1930inverse,fisher1935logic} was an
attempt to quantify uncertainty about parameters without specifying a prior.
The key step is inversion of a \emph{pivot}.

\begin{definition}[Pivot]
\label{def:pivot}
Let $(\mathcal Y,\mathcal B)$ be the observation space and $\Theta$ the
parameter space.  A measurable map $Q:\mathcal Y\times\Theta\to\mathbb R$ is a
\emph{pivot} if there exists a distribution function $G$ on $\mathbb R$ such
that, for all $\theta\in\Theta$ and all $q\in\mathbb R$,
\[
\Prob_\theta\{Q(Y,\theta)\le q\}\;=\;G(q).
\]
\end{definition}

Given a pivot, Fisher proposed to interpret the inversion $\theta=\theta(Y,U)$
(with $U\sim G$) as inducing a ``fiducial distribution'' for $\theta$ given the
realized $Y$.  In the common location-form case $Q(Y,\theta)=T(Y)-\theta$,
the pivot identity yields
\[
\Prob_\theta\{T(Y)-\theta\le q\}=G(q)
\quad\Longrightarrow\quad
\Prob\!\left\{\theta \le T(Y)-G^{-1}(\alpha)\,\big|\,Y\right\}=\alpha,
\]
which Fisher read as a probability statement about $\theta$ conditional on
data, absent a prior.

For prediction, the same inversion logic applies: if one has a pivot for the
prediction error $Y_{n+1}-\widehat Y(Y_{1:n})$, then inverting the pivot yields
a predictive distribution (and hence a prediction set).  For example, under a
Gaussian model with unknown variance,
\[
\frac{Y_{n+1}-\bar Y}{s\sqrt{1+1/n}}\sim t_{n-1},
\]
so inversion yields the classical $(1-\alpha)$ prediction interval
\[
\bar Y \pm t_{n-1,\,1-\alpha/2}\, s\sqrt{1+1/n}.
\]

\subsection{Dempster's direct probabilities}
\label{subsec:dempster_direct}

\citet{dempster1968generalization} reframed the fiducial impulse as ``direct
probability'': rather than assign probability to parameters, define a predictive
assessment directly on the sample space using symmetry and rank structure.  For
exchangeable real-valued data, a canonical move is to work with order
statistics.  Let $Y_{(1)}\le\cdots\le Y_{(n)}$ denote the order statistics of
$Y_{1:n}$.  Dempster's construction assigns equal mass to the $n+1$
order-determined intervals
\[
(-\infty,Y_{(1)}],\ [Y_{(1)},Y_{(2)}],\ \ldots,\ [Y_{(n-1)},Y_{(n)}],\
[Y_{(n)},\infty),
\]
yielding a permutation-invariant predictive rule that depends only on ranks.
Measure-theoretically, the natural object is a finitely additive set function on
the algebra generated by these intervals (and their finite unions), requiring no
parametric model and no prior.

A key refinement is that the natural output is often not a single $\sigma$-additive
predictive measure but lower/upper probabilities induced by a multivalued mapping
(random set) $\Gamma:\mathcal U\to 2^{\mathcal Y}$ from an auxiliary probability
space $(\mathcal U,\mathcal F,\mathsf P)$ \citep{dempster1967multivalued}.  The
associated bounds are
\[
\underline P(B)\;:=\;\mathsf P\{\Gamma(U)\subseteq B\},
\qquad
\overline P(B)\;:=\;\mathsf P\{\Gamma(U)\cap B\neq\varnothing\},
\]
equivalently belief and plausibility $(\Bel,\Pl)$ in the sense systematized by
Shafer \citep{shafer1976evidence}.  In the order-statistic example, $\Gamma$
takes values in the $n{+}1$ rank-determined gaps, and ``equal mass'' corresponds
to a simple distribution over these focal elements, a direct ancestor of later
rank-calibration devices, including conformal prediction.

Dempster's broader program shows how such random-set constructions generalize
Bayesian updating: rather than a single posterior, one obtains posterior bounds
(or belief functions) induced by mapping uncertainty through a set-valued
mechanism.  The distinction between rank-based calibration and full sequential
coherence will reappear in the Lane--Sudderth critique and in our later
impossibility results.

\subsection{Hill's \texorpdfstring{$A_{(n)}$}{A(n)} rule and its rank recursion}
\label{subsec:hill_An}

The rank-calibration principle underlying conformal prediction has a substantial
prehistory. \citet{hill1968bayesian,hill1980paradoxes,hill1988bayesnp} develops a
``direct'' predictive program within the Bayesian--exchangeability tradition:
instead of specifying a full likelihood--prior pair, one stipulates a symmetry
law for the \emph{rank} of the next observation and studies what (if anything)
this can support as a sequential probability system. The background is the
de~Finetti view of prediction as the primitive object
\citep{definetti1937,hewitt1955symmetric}, together with the mid-century debate
on \emph{direct probabilities} as an alternative route to prediction without full
parametric modeling \citep{dempster1963direct}.

\subsubsection*{Jeffreys' $A_2$ and the ``predicting the third'' problem}

The simplest nontrivial case, predicting $Y_3$ given $(Y_1, Y_2)$, was central
to Jeffreys' \emph{Theory of Probability} \citep{jeffreys1939theory}. Jeffreys
proposed what Hill would later call the $A_{(2)}$ rule: given two observations
with order statistics $Y_{(1)} \le Y_{(2)}$, assign equal probability $1/3$ to
each of the three rank positions for $Y_3$:
\[
\Pr\{Y_3 < Y_{(1)}\} = \Pr\{Y_{(1)} < Y_3 < Y_{(2)}\} = \Pr\{Y_3 > Y_{(2)}\} = \frac{1}{3}.
\]
This is a \emph{direct} predictive stipulation: it specifies the rank distribution
of the next observation without committing to a parametric sampling model or a
prior on parameters. Jeffreys regarded this as a natural expression of ignorance
about continuous exchangeable sequences.

Fisher objected. In his view, such direct predictive stipulations do not
constitute a full probabilistic semantics; they are meaningful only as statements
about frequencies under hypothetical repetition, not as coherent degrees of
belief. \citet{seidenfeld1995predicting} provides a careful reconstruction of this
``predicting the third'' controversy, showing how Jeffreys' $A_2$ proposal and
Fisher's reaction illuminate the gap between calibration guarantees and
belief-state semantics.

\subsubsection*{Hill's generalization: from $A_{(2)}$ to $A_{(n)}$}

Hill introduced the notation $A_{(n)}$ and extended Jeffreys' rule to arbitrary
sample sizes. Write $Y_{(1)} \le \cdots \le Y_{(n)}$ for the order statistics of
$Y_{1:n}$, and set $Y_{(0)} = -\infty$, $Y_{(n+1)} = \infty$.

\begin{definition}[Hill's $A_{(n)}$ rank-gap assignment]
\label{def:hill_rank_gap}
For realized $y_{1:n} \in \mathbb{R}^n$, define the order-statistic cells
\[
I_k(y_{1:n}) \defeq (y_{(k)}, y_{(k+1)}], \qquad k = 0, \ldots, n,
\]
with $y_{(0)} = -\infty$ and $y_{(n+1)} = \infty$.
Hill's $A_{(n)}$ stipulation is the one-step \emph{rank-gap} assignment
\begin{equation}
\label{eq:hill_An_gap}
\Pi\!\left(Y_{n+1} \in I_k(Y_{1:n}) \,\middle|\, Y_{1:n}\right)
= \frac{1}{n+1}, \qquad k = 0, \ldots, n,
\end{equation}
interpreted as a conditional probability on the finite algebra generated by the
random partition $\{I_k(Y_{1:n})\}_{k=0}^n$.
\end{definition}

The first two stages make the pattern transparent:
\begin{itemize}[itemsep=4pt]
\item \emph{$A_{(1)}$:} Given $Y_1$, the next observation $Y_2$ falls below or
above $Y_1$ with equal probability $1/2$.

\item \emph{$A_{(2)}$ (Jeffreys' case):} Given $Y_{(1)} \le Y_{(2)}$, the next
observation $Y_3$ falls into each of the three gaps with probability $1/3$.
\end{itemize}

\subsubsection*{Rank formulation}

Let $R_{n+1}$ be the (randomized) rank of $Y_{n+1}$ among $Y_{1:n+1}$:
\[
R_{n+1} \defeq 1 + \sum_{i=1}^n \mathbf{1}\{Y_i < Y_{n+1}\}
+ \text{(fixed randomized tie-break)} \in \{1, \ldots, n+1\}.
\]
Then \eqref{eq:hill_An_gap} is equivalent to
\begin{equation}
\label{eq:hill_rank_uniform}
\Pi\!\left(R_{n+1} = r \,\middle|\, Y_{1:n}\right) = \frac{1}{n+1}, \qquad r = 1, \ldots, n+1,
\end{equation}
i.e., the next rank is uniform given the past. This is exactly the rank-calibration
content that reappears in conformal constructions.

\begin{proposition}[Projective rank recursion]
\label{prop:hill_rank_recursion}
Suppose a sequential predictive system $\Pi(\cdot \mid Y_{1:n})$ enforces the
uniform rank law \eqref{eq:hill_rank_uniform} for every $n$. Then:
\begin{enumerate}[label=(\roman*),itemsep=2pt]
\item \emph{(Prefix stability)} The induced one-step rank law at stage $m < n$
coincides with the law obtained by applying the construction directly at stage $m$.
\item \emph{(Recursive update)} Passing from stage $n$ to $n+1$ consists of
inserting $Y_{n+1}$ into the order-statistic list and reapplying the uniform
rank law:
\[
\Pi\!\left(R_{n+2} = r \,\middle|\, Y_{1:n+1}\right) = \frac{1}{n+2}, \qquad r = 1, \ldots, n+2.
\]
\end{enumerate}
The rank process is the canonical state variable for the $A_{(n)}$ recursion.
\end{proposition}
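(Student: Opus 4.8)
The plan is to reduce both assertions to one structural fact: the rank variable $R_{m+1}$ is a measurable function of the prefix $(Y_1,\ldots,Y_{m+1})$ alone, and the uniform-rank stipulation \eqref{eq:hill_rank_uniform} is imposed stage-by-stage on the finite algebra generated by the order-statistic partition. First I would record the gap--rank dictionary: for continuous data, or after the fixed randomized tie-break built into the definition of $R_{m+1}$, the event $\{Y_{m+1}\in I_k(Y_{1:m})\}$ coincides with $\{R_{m+1}=k+1\}$, so \eqref{eq:hill_An_gap} and \eqref{eq:hill_rank_uniform} are the same statement on the algebra $\mathcal A_m$ generated by $\{I_k(Y_{1:m})\}_{k=0}^m$. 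This identification lets me move freely between the gap picture and the rank picture in what follows.

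For prefix stability (i), I would first note that the stage-$m$ assessment $\Pi(R_{m+1}\in\cdot\mid Y_{1:m})$ is pinned down by the hypothesis at $n=m$, and then show it is not overridden by any higher-stage constraint. The key observation is that the stage-$n$ stipulation (for $n>m$) constrains the conditional law of $R_{n+1}$ given the strictly larger information set $Y_{1:n}$, whereas $R_{m+1}$ is $\mathcal A_m$-measurable and hence assessed relative to $Y_{1:m}$; the two constraints therefore bear on different conditionals and are mutually compatible. Consistency then follows from the iterated-prevision (tower) property for the finitely additive conditionals, giving that the rank law induced at stage $m$ inside the stage-$n$ system agrees with the direct stage-$m$ construction.

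For the recursive update (ii), the argument is essentially bookkeeping on the order statistics. Given $Y_{1:n}$ and its sorted list, observing $Y_{n+1}$ is a deterministic insertion of one new point, producing the sorted list of $Y_{1:n+1}$ and hence the refined partition $\{I_k(Y_{1:n+1})\}_{k=0}^{n+1}$ into $n+2$ cells; re-applying the hypothesis at stage $n+1$ yields $\Pi(R_{n+2}=r\mid Y_{1:n+1})=1/(n+2)$, which is the displayed recursion. To justify the closing claim that the rank process is the canonical state, I would then observe that the one-step predictive depends on the past only through the current order-statistic configuration (and the induced rank law only through its cardinality $n+1$), and that this configuration updates deterministically by insertion---so it is a sufficient, Markovian state variable for the $A_{(n)}$ recursion.

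The step I expect to be the main obstacle is prefix stability made coherent: establishing that the stage-wise uniform constraints form a genuinely \emph{projective} (mutually consistent) family of finitely additive conditionals without smuggling in a countably additive joint law. The tempting shortcut is to prove consistency by marginalizing a de~Finetti mixture, but that route is unavailable here---and is precisely what the later impossibility results deny (Theorem~\ref{thm:no_joint})---so compatibility must be read off directly from the nested conditioning filtrations $\sigma(Y_{1:m})\subseteq\sigma(Y_{1:n})$ together with the iterated-prevision property, while taking care that the randomized tie-break keeps the rank variables well defined when ties occur.
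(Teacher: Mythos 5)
Your argument is correct and follows essentially the same route as the paper's (very short) proof: item (ii) is immediate bookkeeping on the refined $(n+2)$-cell partition, and item (i) holds because rank events at stage $m$ are measurable with respect to the finite order-statistic partition of the prefix and the stipulation is identical at every stage. Your appeal to the iterated-prevision/tower property and your worry about projectivity of the finitely additive family are not needed for the proposition as stated, since it is a conditional claim ("suppose a system enforces the uniform rank law for every $n$"), and the question of whether such a system exists coherently is deferred to the later existence and impossibility results.
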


\begin{proof}
Item (ii) is immediate: at stage $n+1$ the partition is $\{I_k(Y_{1:n+1})\}_{k=0}^{n+1}$,
and the rule assigns mass $1/(n+2)$ to each cell. Item (i) follows because rank
events are cylinder events measurable with respect to finite order-statistic
partitions, and the $A_{(n)}$ stipulation is identical at every stage.
\end{proof}

\subsubsection*{What the rule provides, and what it does not}

The attraction of $A_{(n)}$ is that it delivers a distribution-free symmetry
statement (uniform next-rank) without committing to a parametric model. The
limitation is equally structural: \eqref{eq:hill_An_gap}--\eqref{eq:hill_rank_uniform}
define a \emph{rank-calibrated recursion}, a family of one-step conditional
assessments on a finite algebra, not a $\sigma$-additive joint law with globally
consistent regular conditioning. Whether such a joint law exists, and what
coherence properties fail if it does not, is the subject of the next subsection.

\subsubsection*{Extensions and applications}

Hill's original motivation was predictive inference for percentiles and other
functionals from exchangeable samples \citep{hill1968bayesian}. In survival
analysis, variants of the $A_{(n)}$ gap assignment adapted to right-censoring
underpin nonparametric predictive inference (NPI); see
\citet{BerlinerHill1988,CoolenYan2004,Fong2022}. The applied literature includes
maximum-entropy and other variations that preserve the rank-gap skeleton while
modifying within-cell behavior \citep{Abellan2011}.

\subsection{Lane--Sudderth critique}
\label{subsec:lane_sudderth}

The modern coherence critique of fiducial/direct-probability prediction was
sharpened by \citet{lane1975coherence} (see also \citealt{dawid1982geometry}).
Their message is that exchangeability plus finite-sample symmetry does not, by
itself, guarantee the sequential coherence properties enjoyed by
$\sigma$-additive Bayesian prediction.

\begin{theorem}[Lane--Sudderth, schematic]
\label{thm:lane_sudderth_original}
Hill's $A_{(n)}$ rule and related direct-probability predictors:
(i) can fail conglomerability with respect to natural conditioning partitions;
(ii) can violate likelihood/extensionality principles by depending on features
of the joint law beyond likelihood-relevant components; (iii) do not, in
general, extend to a countably additive joint law on infinite sequences; and
(iv) can be separated from Bayesian posterior predictives in suitable
decision-theoretic comparisons.
\end{theorem}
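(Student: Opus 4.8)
The four assertions are best proved as a coordinated package resting on the rank-calibration identity \eqref{eq:hill_rank_uniform} together with the coherence machinery of Section~\ref{sec:foundations}. The organizing observation is the implication (i)~$\Rightarrow$~(iii): as noted after Definition~\ref{def:conglomerable}, every $\sigma$-additive measure is conglomerable, so any countably additive exchangeable law on $\mathcal Y^\infty$ that reproduced the $A_{(n)}$ assessments as its regular conditionals would itself be conglomerable with respect to every partition. Exhibiting a single partition on which the $A_{(n)}$ conditionals fail conglomerability therefore establishes (i) and simultaneously forecloses any $\sigma$-additive extension, giving (iii) as an immediate corollary. The plan is thus to concentrate the real work on (i) and to obtain (iii) for free.

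For (i) I would follow the Lane--Sudderth template \citep{lane1975coherence, hill1980paradoxes}. Working on $\mathcal Y=\mathbb R$ with continuous data (ties null), the key structural fact is that \eqref{eq:hill_An_gap} is invariant under every strictly increasing reparametrization, so the predictive mass of each rank gap is pinned at $1/(n+1)$ regardless of the location and scale of the realized $y_{1:n}$. I would then choose a countable measurable partition $\{A_j\}$ of path space indexed by the position of a designated later coordinate relative to the order-statistic cells, together with a one-sided tail or record event $B$ (e.g.\ $\{Y_{n+1}>Y_{(n)}\}$), for which this scale-free rank law forces $\Prob(B\mid A_j)$ to sit uniformly on one side of the unconditional $\Prob(B)$, yielding a strict gap $\inf_j \Prob(B\mid A_j)>\Prob(B)$. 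This strict separation is exactly the mismatch produced by the improper, Lebesgue-type invariant structure underlying $A_{(n)}$ but unreachable by any proper mixing measure. By Heath--Sudderth (Theorem~\ref{thm:heath_sudderth}) the nonconglomerability is equivalent to a countable Dutch book, recording the failure operationally; part (iii) then follows from the backbone implication.

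For (ii) I would display the likelihood/extensionality violation by contrasting $A_{(n)}$ with the coherent predictive on a parametric submodel in which the order-statistic map is not likelihood-sufficient: since \eqref{eq:hill_rank_uniform} depends on $y_{1:n}$ only through the order pattern, two joint laws that agree on the likelihood-relevant component but differ on the discarded aspect yield identical $A_{(n)}$ outputs while the coherent predictive differs, contradicting Definition~\ref{def:extensionality} (the quantitative covariate version being deferred to Theorem~\ref{thm:extensionality}). For (iv) I would instantiate Le~Cam's comparison (Definition~\ref{def:deficiency}) on a bounded decision problem where the rank statistic is Blackwell-deficient: fixing a one-parameter $\sigma$-additive submodel on which the order statistics are not sufficient and a bounded loss (e.g.\ a truncated interval-scoring loss) whose Bayes risk strictly improves when the discarded information is used, the $A_{(n)}$ procedure factors through the rank map and is therefore bounded below by the best rank-based rule, which exceeds the full-data Bayes risk; the gap lower-bounds $\delta(\mathcal E_{\mathrm{rank}},\mathcal E_{\mathrm{full}})$ and gives the stated separation, made precise later in Theorem~\ref{thm:lecam}.

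I expect the main obstacle to be (i): constructing an explicit partition $\{A_j\}$ and event $B$ with a verifiable \emph{strict} conglomerability gap while checking measurability and $\Prob(A_j)>0$, since the separation hinges on the delicate improper-invariance structure that distinguishes finitely additive rank calibration from any proper de~Finetti mixture (Theorem~\ref{thm:definetti}).
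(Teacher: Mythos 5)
Your overall architecture matches the paper's. The appendix proof (Appendix~\ref{app:lane_sudderth}) likewise concentrates all the real work on the conglomerability failure, derives (iii) as the corollary ``countable additivity implies conglomerability, hence nonconglomerable rank constraints admit no $\sigma$-additive realization'' (Corollary~\ref{cor:no_sigma_additive_An} via Lemma~\ref{lem:conglomerability_sigma}), and effectively outsources (ii) and (iv) to Theorems~\ref{thm:extensionality} and~\ref{thm:lecam}, exactly as you propose.

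The gap is in your construction for (i), and it is concrete. First, a partition ``indexed by the position of a designated later coordinate relative to the order-statistic cells'' is a \emph{finite} partition ($n+1$ cells), and finite partitions can never witness nonconglomerability: finite additivity alone already gives $P(B)=\sum_i P(B\mid A_i)\,P(A_i)$, a convex combination, so the gap you need is impossible there. Nonconglomerability requires a genuinely countably infinite partition. Second, your candidate event $B=\{Y_{n+1}>Y_{(n)}\}$ is a fixed-time event, whereas the mechanism the paper uses (following Lane--Sudderth) hinges on $B$ being a \emph{tail} event: conditioning a full conditional probability on the atoms of a countable partition generating the tail $\sigma$-field forces $\Pi(B\mid A_m)\in\{0,1\}$; the rank indicators $\mathbf{1}\{R_n>n/2\}$ are then used to build a permutation-symmetric tail event with $\Pi(B\mid A_m)=1$ on every atom, while exchangeability forces $\Pi(B)<1$. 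Without that tail/0--1 step there is no visible source of a \emph{strict} gap $\inf_m\Pi(B\mid A_m)>\Pi(B)$; your ``improper invariance'' intuition is the right heuristic but does not by itself manufacture the partition. Separately, your argument for (ii) is inverted: showing that two laws agreeing on the likelihood-relevant component yield \emph{identical} $A_{(n)}$ outputs is perfectly consistent with extensionality --- that is an insufficiency/information-loss statement, i.e., your point (iv). The violation asserted in (ii) is that the rule \emph{changes} when only non-likelihood features (e.g., the design marginal $P(X)$) change while $P(Y\mid X)$ is held fixed, which is what Theorem~\ref{thm:extensionality} actually constructs.
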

\begin{proof}
See ~\ref{app:lane_sudderth} for the extended proof.
\end{proof}
The conglomerability failures are particularly stark: Lane and Sudderth
construct partitions $\{A_i\}$ and events $B$ for which
\[
P(B)\;<\;\inf_i P(B\mid A_i),
\]
so the unconditional assessment lies outside the range of its conditional
assessments. By the Heath--Sudderth theorem (Theorem~\ref{thm:heath_sudderth}),
such nonconglomerability admits Dutch book constructions. These results
contributed to the decline of fiducial methods in mainstream statistics and
reinforced the view that coherent sequential prediction is most naturally
grounded in $\sigma$-additive models with well-defined conditioning behavior.

\subsection{Prequential and game-theoretic foundations: Dawid and Shafer--Vovk}
\label{subsec:prequential_game}

Two modern lines sharpen the same conceptual split that runs through the
fiducial/direct-probability tradition.

\subsubsection*{Dawid's prequential principle}
Dawid argues that the primary object of statistical analysis is a sequence of
probability forecasts, judged by their \emph{predictive} performance (calibration,
proper scoring, and related diagnostics), rather than by commitment to a single
global joint law \citep{dawid1984prequential,dawid1982geometry}.
This perspective naturally prioritizes finite-sample predictive validity claims,
but it also clarifies the limitation: \emph{good prequential calibration does not
by itself entail likelihood extensionality or the existence of a coherent
$\sigma$-additive joint law supporting sequential updating.}

\subsubsection*{Shafer--Vovk game-theoretic probability and conformal prediction}
Shafer and Vovk develop probability as an operational, game-theoretic discipline
organized around forecasting protocols and adversarial tests
\citep{shafervovk2001probabilityfinance}. Conformal prediction is explicitly
presented within this tradition as a rank-calibrated method for producing valid
predictive regions under exchangeability, with a tutorial treatment that
positions CP as a forecasting/calibration device rather than as posterior
conditioning under a generative model \citep{shafervovk2008tutorial}.

\subsubsection*{How this helps our taxonomy}
These prequential/game-theoretic foundations reinforce the central message of
this section: CP belongs to a \emph{direct-probability} lineage in which
finite-sample calibration is primary. They also clarify what is \emph{not}
being claimed: none of the above foundations implies (i) conditional
extensionality with respect to $P(Y\mid X)$, (ii) conglomerability under rich
partitions, or (iii) representability as regular conditionals of a
$\sigma$-additive exchangeable law. The separation theorems in
Section~\ref{sec:impossibility} connect these distinctions to concrete failure
modes; see Table~\ref{tab:so_what} (practice map) and
Table~\ref{tab:dependency_map} (logical linkage).

\subsection{Interpretation: the fiducial paradigm}
\label{subsec:fiducial_paradigm}

The fiducial/direct-probability tradition is best understood as a theory of
\emph{predictive assessment} that avoids explicit priors by working directly
with pivots, ranks, and predictive symmetries. Its core tradeoff is that
finite-sample calibration is purchased by relaxing the Kolmogorov picture of a
single $\sigma$-additive joint law that generates all sequential conditionals.
Operationally, the resulting objects behave well as \emph{one-shot} uncertainty
summaries, but need not behave well as \emph{probabilistic state variables}
that can be transported across designs, composed across stages, or optimized
under general loss.

Measure-theoretically, these predictors are naturally represented as finitely
additive set functions (or kernels) defined on algebras generated by rank/pivot
events. Such objects can fail conglomerability under sufficiently rich
partitions, can be non-extensional with respect to likelihood relevance, and
may lack a $\sigma$-additive extension that would justify sequential updating
via regular conditional probabilities. These are not merely philosophical
distinctions: they correspond to concrete failure modes when such outputs are
treated as coherent probabilities inside modern ML pipelines.

\section{Conformal Prediction as Finitely Additive Inference}
\label{sec:cp}

We now argue that conformal prediction (CP) is best understood as a modern,
rank-calibrated instance of the Fisher--Dempster--Hill \emph{direct-probability}
template. The key point is ontological: CP is natively a \emph{set-valued,
calibrated} rule. When CP outputs are \emph{reinterpreted} as if they arose from
a single coherent predictive \emph{probability kernel} (and then transported
sequentially as a belief state), one implicitly introduces a \emph{bridge}
from set-valued validity to rank-gap probability assignments. That bridge is
natural (it mirrors the proof of coverage) but non-unique, and it places CP
squarely in the classical $A_{(n)}$ / fiducial lineage where Lane--Sudderth-type
coherence obstructions apply.

\subsection{Review of conformal prediction}
\label{subsec:cp_review}

Let $(X_1,Y_1),\ldots,(X_n,Y_n)$ be i.i.d.\ (or, more generally, exchangeable)
draws from an unknown law $P$ on $\mathcal X\times\mathcal Y$. Conformal
prediction \citep{vovk2005algorithmic} constructs a set-valued predictor
$C_n(X_{n+1};Z_{1:n})$ for $Y_{n+1}$ with the finite-sample \emph{marginal}
coverage guarantee
\[
\Prob\{Y_{n+1}\in C_n(X_{n+1};Z_{1:n})\}\ \ge\ 1-\alpha,
\]
holding under exchangeability for all sample sizes $n$.

A standard (full) conformal procedure proceeds as follows.

\begin{enumerate}[leftmargin=*]
\item Choose a \emph{nonconformity} (or conformity) score
$s:\mathcal X\times\mathcal Y\to\mathbb R$.
A common choice is a residual score $s(x,y)=|y-\widehat f(x)|$, where
$\widehat f$ is fitted from the data.

\item For a candidate $y\in\mathcal Y$, form the augmented dataset
$\{(X_i,Y_i)\}_{i=1}^n\cup\{(X_{n+1},y)\}$ and compute scores
\[
S_i=s(X_i,Y_i),\quad i=1,\ldots,n,
\qquad
S_{n+1}=s(X_{n+1},y).
\]

\item Define the conformal p-value
\[
p(y)\;=\;\frac{1}{n+1}\sum_{i=1}^{n+1}\mathbbm{1}\{S_i\ge S_{n+1}\}
\;=\;\frac{\#\{i: S_i\ge S_{n+1}\}}{n+1}.
\]
(For continuous scores, ties have probability $0$; otherwise adopt a standard
randomized tie-breaking convention.)

\item Return the prediction set
\[
C_n(X_{n+1};Z_{1:n}) \;=\; \{y\in\mathcal Y: p(y)>\alpha\}.
\]
\end{enumerate}

\begin{theorem}[Conformal coverage {\citep{vovk2005algorithmic}}]
\label{thm:conformal_coverage}
If $(Z_1,\ldots,Z_n,Z_{n+1})$ are exchangeable, then
\[
\Prob\{Y_{n+1}\in C_n(X_{n+1};Z_{1:n})\}\ \ge\ 1-\alpha.
\]
\end{theorem}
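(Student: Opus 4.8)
The plan is to reduce the coverage statement to the classical fact that the conformal p-value, evaluated at the \emph{true} response, is (super-)uniformly distributed. First I would record the key duality built into the construction: by the definition of the prediction set, the event $Y_{n+1}\in C_n(X_{n+1};Z_{1:n})$ holds if and only if $p(Y_{n+1})>\alpha$, where $p(Y_{n+1})$ is the conformal p-value with the candidate $y$ replaced by the realized $Y_{n+1}$. The point of this substitution is that plugging in the true value turns the augmented dataset back into the full sample $(Z_1,\ldots,Z_{n+1})$, so the object to analyze is symmetric in all $n+1$ points. It therefore suffices to show $\Prob\{p(Y_{n+1})\le\alpha\}\le\alpha$.

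Next I would transfer exchangeability from the data to the scores. Since the score map $s$ is a fixed measurable function applied coordinatewise, exchangeability of $(Z_1,\ldots,Z_{n+1})$ forces exchangeability of the induced scores $(S_1,\ldots,S_{n+1})$ with $S_i=s(X_i,Y_i)$. The p-value $p(Y_{n+1})=\frac{1}{n+1}\#\{i:S_i\ge S_{n+1}\}$ depends on the sample only through the position (rank) of $S_{n+1}$ among the $n+1$ scores, and always lands on the grid $\{1/(n+1),\ldots,(n+1)/(n+1)\}$ (the index $i=n+1$ is always counted, so $p\ge 1/(n+1)$). This is exactly the uniform-next-rank mechanism underlying Hill's recursion (cf.\ Proposition~\ref{prop:hill_rank_recursion}): in the continuous/no-tie case, exchangeability makes the rank of $S_{n+1}$ uniform on $\{1,\ldots,n+1\}$, so each grid value is attained with probability $1/(n+1)$.

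The conclusion then follows by a counting argument: the number of grid points $k/(n+1)$ that are $\le\alpha$ is $\lfloor(n+1)\alpha\rfloor$, so
\[
\Prob\{p(Y_{n+1})\le\alpha\}=\frac{\lfloor(n+1)\alpha\rfloor}{n+1}\le\alpha,
\]
which yields $\Prob\{Y_{n+1}\in C_n(X_{n+1};Z_{1:n})\}=\Prob\{p(Y_{n+1})>\alpha\}\ge 1-\alpha$, as claimed.

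The main obstacle is not the probabilistic core but the careful handling of ties. With ties the rank of $S_{n+1}$ is no longer uniquely defined; the $\ge$ convention in the p-value is chosen precisely so that $p(Y_{n+1})$ becomes stochastically \emph{larger} than uniform, keeping the inequality $\Prob\{p\le\alpha\}\le\alpha$ pointed in the correct direction. I would make this rigorous either by invoking the stated randomized tie-breaking rule---under which the uniform-rank argument applies verbatim to the randomized ranks---or by a direct symmetry/domination argument showing that the upper-tail counting rule can only enlarge the p-value relative to the no-tie case. Either route preserves coverage, at the cost of possible over-coverage when the score law is discrete, and neither uses any property of $P$ beyond exchangeability, which is what makes the guarantee distribution-free.
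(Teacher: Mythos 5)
Your proposal is correct and follows essentially the same route as the paper, which proves coverage by observing that under exchangeability the rank of $S_{n+1}$ among $\{S_1,\ldots,S_{n+1}\}$ is uniform on $\{1,\ldots,n+1\}$ (up to tie-breaking), hence $\Prob\{p(Y_{n+1})\le\alpha\}\le\alpha$; you simply spell out the grid-counting step $\lfloor(n+1)\alpha\rfloor/(n+1)\le\alpha$ and the tie-handling in more detail. The only caveat worth noting is that for \emph{full} conformal the score is fitted on the augmented dataset, so the justification for exchangeability of the scores is symmetry of the score construction in all $n+1$ points rather than a fixed function applied coordinatewise, but this does not change the argument.
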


The proof uses only exchangeability: under the true draw
$(X_{n+1},Y_{n+1})$, the rank of $S_{n+1}$ among $\{S_1,\ldots,S_{n+1}\}$ is
uniform on $\{1,\ldots,n+1\}$ (up to the tie-breaking convention), hence
$\Prob\{p(Y_{n+1})\le \alpha\}\le \alpha$.

\subsection{Conformal prediction as rank-based pivot inversion}
\label{subsec:cp_pivot}

The coverage proof of Theorem~\ref{thm:conformal_coverage} is already a
direct-probability argument. Fix a score construction (including any algorithmic
randomness used to fit $\widehat f$). Under exchangeability, the random variable
\[
R_{n+1}\;\defeq\;\mathrm{Rank}(S_{n+1};S_1,\ldots,S_{n+1}) \in \{1,\ldots,n+1\}
\]
is (randomized-)uniform, and its distribution does not depend on the unknown law
$P$ beyond exchangeability. In this sense $R_{n+1}$ acts as a discrete pivot.
Conformal prediction then \emph{inverts} this pivot: it retains those candidate
$y$ values whose induced rank is not too extreme. This mirrors Fisher's inversion
of a pivot, with the important difference that CP uses a discrete rank pivot
generated by symmetry rather than a model-based pivotal quantity.

\subsection{No-covariate case and the Hill--Dempster direct-probability template}
\label{subsec:cp_nocov}

In the no-covariate setting ($X_i$ absent), the conformal construction becomes
purely rank-based on $Y_{1:n}$. This places CP in the same family as the
direct-probability predictive rules of Dempster and Hill. It is helpful to
distinguish:
(i) the set-valued predictor $C_n$ for a chosen $\alpha$, and
(ii) the induced rank-gap \emph{algebra} generated by the order statistics.

\begin{proposition}[Rank structure and the $A_{(n)}$ partition]
\label{prop:cp_is_hill}
Let $Y_1,\ldots,Y_n$ be exchangeable real-valued observations and consider the
no-covariate conformal construction with score $s(y)=y$ (or any strictly
increasing score). Then for any $y$ satisfying $Y_{(k)}\le y<Y_{(k+1)}$,
\[
p(y)\;=\;\frac{\#\{i: Y_i\ge y\}+1}{n+1}
\;=\;\frac{n-k+1}{n+1}.
\]
Consequently, the conformal acceptance region $\{y:p(y)>\alpha\}$ is a union
of consecutive order-statistic intervals, and the induced rank partition of
$\R$ is exactly the $(n+1)$-cell partition
\[
(-\infty,Y_{(1)}],\ (Y_{(1)},Y_{(2)}],\ \ldots,\ (Y_{(n)},\infty)
\]
used in the Hill--Dempster direct-probability construction.
\end{proposition}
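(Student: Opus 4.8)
The plan is to prove the statement in three movements: an exact evaluation of the conformal $p$-value on the interior of each order-statistic gap, a monotonicity argument that converts the acceptance set into a union of consecutive gaps, and a direct identification of the resulting partition with the $A_{(n)}$ cells of Definition~\ref{def:hill_rank_gap}.

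First I would substitute the identity score into the $p$-value formula of Section~\ref{subsec:cp_review}. With $s(y)=y$ the augmented scores are $S_i=Y_i$ for $i\le n$ and $S_{n+1}=y$, so
\[
p(y)=\frac{1}{n+1}\sum_{i=1}^{n+1}\mathbbm{1}\{S_i\ge S_{n+1}\}
=\frac{1}{n+1}\Bigl(\#\{i\le n: Y_i\ge y\}+1\Bigr),
\]
where the final $+1$ records the tautological term $\mathbbm{1}\{y\ge y\}=1$ contributed by the test point itself. Fixing $y$ in the interior of a gap, $Y_{(k)}<y<Y_{(k+1)}$ (with $Y_{(0)}=-\infty$, $Y_{(n+1)}=\infty$), the observations that meet or exceed $y$ are exactly $Y_{(k+1)},\dots,Y_{(n)}$, so $\#\{i\le n:Y_i\ge y\}=n-k$ and hence $p(y)=(n-k+1)/(n+1)$, the claimed value. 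The only care needed is the tie/boundary convention: at $y=Y_{(k)}$ the count jumps by one, but under the continuous (strictly-increasing-score) setting these are null events that do not affect the partition-level conclusion.

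Next I would read off the structural consequence. The map $y\mapsto\#\{i\le n:Y_i\ge y\}$ is non-increasing and piecewise constant, decreasing by exactly one as $y$ crosses each order statistic; hence $p(\cdot)$ is a non-increasing step function taking the constant value $(n-k+1)/(n+1)$ on the $k$-th cell. Because $p$ is monotone in $y$, the super-level set $\{y:p(y)>\alpha\}$ is a lower set, equal to the union of the lowest cells $I_0,\dots,I_{k^\ast}$, where $k^\ast$ is the largest index with $(n-k+1)/(n+1)>\alpha$. This establishes that the acceptance region is a union of consecutive order-statistic intervals.

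Finally I would match the partition. The $n+1$ cells $(Y_{(k)},Y_{(k+1)}]$, $k=0,\dots,n$, generated by the order statistics are exactly the rank-gap cells $I_k(Y_{1:n})$ of Definition~\ref{def:hill_rank_gap}, and the fact that $p(\cdot)$ steps by $1/(n+1)$ across each cell boundary is precisely the uniform next-rank law \eqref{eq:hill_rank_uniform}; this is the Hill--Dempster $A_{(n)}$ assignment. I expect no genuine obstacle here, since the core content is an exact counting identity together with monotonicity. The one place demanding care is the bookkeeping of endpoints and ties---reconciling the closed-open description $Y_{(k)}\le y<Y_{(k+1)}$ used for the $p$-value with the open-closed cells $(Y_{(k)},Y_{(k+1)}]$ of the partition---which I would dispatch by invoking continuity of the score law so that the boundary set has probability zero.
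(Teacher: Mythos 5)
Your proof is correct and follows essentially the same route as the paper's: substitute the identity score into the $p$-value formula, count the observations at or above $y$ in each order-statistic gap to get $p(y)=(n-k+1)/(n+1)$, and use monotonicity of $p$ to identify the acceptance region with a union of consecutive cells. Your treatment is in fact slightly more careful than the paper's one-line proof, since you explicitly handle the boundary/tie convention at $y=Y_{(k)}$ and spell out the step-function argument that the paper dismisses with ``the remaining claims follow immediately.''
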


\begin{proof}
With score $s(y)=y$, we have $S_i=Y_i$ and $S_{n+1}=y$. If $Y_{(k)}\le y<Y_{(k+1)}$,
exactly $n-k$ of the $Y_i$ satisfy $Y_i\ge y$, hence
\[
p(y)=\frac{(n-k)+1}{n+1}=\frac{n-k+1}{n+1}.
\]
The remaining claims follow immediately.
\end{proof}

\begin{remark}[What enters at the $A_{(n)}$ level]
Proposition~\ref{prop:cp_is_hill} identifies the \emph{rank partition} induced
by conformal prediction in the no-covariate case. Hill's $A_{(n)}$ rule assigns
equal mass $1/(n+1)$ to these $(n+1)$ order-statistic cells. CP itself does not
commit to that probability assignment; it commits to a \emph{set} (or p-value map)
built from the same rank pivot. The $A_{(n)}$ assignment appears once we impose
an explicit \emph{bridge} from set-valued outputs to a probabilistic kernel on the
rank-gap algebra.
\end{remark}

\subsection{Regression case: conditional prediction sets and kernelization}
\label{subsec:cp_regression_measure}

With covariates, conformal methods operate on $\mathcal Z\defeq\mathcal X\times\mathcal Y$.
The primitive output is a measurable set-valued map
\[
C_n:\ \mathcal X\times \mathcal Z^n \to \mathcal B(\mathcal Y),
\qquad
(x; z_{1:n}) \mapsto C_n(x; z_{1:n}),
\]
and the conformal guarantee is the marginal statement
\[
\Prob\{Y_{n+1}\in C_n(X_{n+1}; Z_{1:n})\}\ \ge\ 1-\alpha.
\]
Thus CP supplies an \emph{acceptance region} in $\mathcal Y$ for each realized
$(x;z_{1:n})$; it does not intrinsically specify a unique conditional probability
measure on $\mathcal Y$.

A canonical implementation is split conformal prediction \citep{lei2018distribution}.
Let $\mathcal I_1,\mathcal I_2$ be a measurable partition of $\{1,\ldots,n\}$
into training and calibration indices with $|\mathcal I_2|=m$.
Let $\widehat f$ be the (possibly random) regression fit obtained from the
training sample $\{Z_i\}_{i\in\mathcal I_1}$. Define the residual map
\[
r(z;\widehat f)\ \defeq\ |y-\widehat f(x)|,\qquad z=(x,y)\in\mathcal Z,
\]
and calibration residuals $R_i=r(Z_i;\widehat f)$ for $i\in\mathcal I_2$.
Let $\widehat q$ be the empirical $(1-\alpha)$ quantile of $(R_i)_{i\in\mathcal I_2}$
(under any fixed tie convention). Then, for the absolute-residual score,
\[
C_n(x;Z_{1:n})
\;=\;
\bigl\{y\in\mathcal Y:\ r((x,y);\widehat f)\le \widehat q \bigr\}
\;=\;
[\widehat f(x)-\widehat q,\ \widehat f(x)+\widehat q].
\]

\subsubsection*{Set-valued prediction versus predictive kernels}
A Bayesian posterior predictive provides a regular conditional probability kernel
$K_n(\,\cdot\,\mid x,z_{1:n})\in \mathcal P(\mathcal Y)$. By contrast, split conformal
specifies $C_n(x;z_{1:n})$ directly as an order-statistic functional of calibration
residuals. Without an additional kernelization choice (a bridge principle analogous
to Definition~\ref{def:cp_bridge_kernel}, now applied to residual ranks), the map
$(x,z_{1:n})\mapsto C_n(x;z_{1:n})$ does not determine a unique element of
$\mathcal P(\mathcal Y)$.

\subsubsection*{Where finite additivity enters}
The direct-probability connection becomes explicit when one asks whether the family
$\{C_n\}_{n\ge1}$ can be promoted to a \emph{single coherent sequential} predictive law
on an infinite product space, i.e.\ whether there exists a joint $\sigma$-additive
probability on $\mathcal Z^\infty$ whose one-step regular conditionals reproduce,
for all $n$, a bridged rank-calibration law on the corresponding rank-gap algebras.
Rank calibration fixes the distribution of a \emph{discrete} pivot at each $n$, but
does not automatically supply the cross-$n$ consistency relations required by
Kolmogorov/Ionescu--Tulcea to define a $\sigma$-additive joint law on $\mathcal Z^\infty$.

\subsection{A bridge principle: from set-valued validity to rank-gap kernels}
\label{subsec:cp_bridge}

Because CP natively outputs a set (or p-value function), any interpretation that
treats CP as if it supplied a full predictive probability kernel must add
structure. The following definition makes this bridge explicit.

\begin{definition}[Rank-gap algebra and $A_{(n)}$-bridge (kernelization)]
\label{def:cp_bridge_kernel}
Fix $n\ge 1$ and realized data $y_{1:n}\in\R^n$ with order statistics
$y_{(1)}\le \cdots \le y_{(n)}$, and set $y_{(0)}=-\infty$, $y_{(n+1)}=\infty$.
Define the (random) order-statistic cells
\[
I_k(y_{1:n}) \defeq (y_{(k)},y_{(k+1)}],\qquad k=0,\ldots,n,
\]
and let $\mathcal A_n(y_{1:n})$ be the finite algebra generated by the partition
$\{I_k(y_{1:n})\}_{k=0}^n$.

An \emph{$A_{(n)}$-bridge} is a choice of a (one-step) predictive set function
$\Pi_n(\cdot\mid y_{1:n})$ on $\mathcal A_n(y_{1:n})$ such that
\begin{equation}
\label{eq:An_cell_assignment_clean}
\Pi_n\!\left(Y_{n+1}\in I_k(y_{1:n}) \,\middle|\, y_{1:n}\right)=\frac{1}{n+1},
\qquad k=0,\ldots,n.
\end{equation}
A \emph{within-cell completion} is any additional rule specifying conditional
behavior inside each cell (e.g.\ uniform on $(y_{(k)},y_{(k+1)}]$, a maximum-entropy
completion, or leaving the within-cell law unspecified).
\end{definition}

\begin{remark}
The bridge in Definition~\ref{def:cp_bridge_kernel} is not forced by CP; it is an
\emph{ontological upgrade} often made implicitly when CP outputs are treated as
probabilistic belief states (e.g.\ for sequential decision-making, dynamic
programming, or likelihood-style comparisons). The Lane--Sudderth critique applies
to such bridged/sequentialized objects, not to the bare set-valued coverage
statement.
\end{remark}

\subsection{From bridge to sequential law: nonconglomerability}
\label{subsec:cp_conglomerability}

The bridge principle (Definition~\ref{def:cp_bridge_kernel}) converts CP's 
set-valued output into a rank-gap probability assignment. We now show that 
this assignment, when extended sequentially, inherits the classical 
nonconglomerability obstruction identified by Lane and Sudderth for 
Hill's $A_{(n)}$ rule.

\subsubsection*{The classical result for $A_{(n)}$-type rules}

Let $\mathcal Y = \R$ with its Borel $\sigma$-field. For realized 
$y_{1:n} \in \R^n$, define the order-statistic cells
\[
I_k(y_{1:n}) \defeq (y_{(k)}, y_{(k+1)}], \qquad k = 0, \ldots, n,
\]
with $y_{(0)} = -\infty$ and $y_{(n+1)} = \infty$. The Hill--Dempster 
rank-calibration template assigns equal mass to these cells:
\begin{equation}
\label{eq:An_cell_assignment}
\Pi_n\!\left(Y_{n+1} \in I_k(y_{1:n}) \,\middle|\, y_{1:n}\right) 
= \frac{1}{n+1}, \qquad k = 0, \ldots, n.
\end{equation}

\cite{hill1988direct} shows that finitely additive exchangeable 
laws on $\R^\infty$ satisfying \eqref{eq:An_cell_assignment} for all $n$ 
do exist. However, Lane and Sudderth \citep{lane1975coherence} show that 
any such extension necessarily fails conglomerability.

\begin{theorem}[Nonconglomerability of $A_{(n)}$-type extensions 
{\citep{lane1975coherence}}]
\label{thm:conglomerability}
Let $\Pi$ be a finitely additive, exchangeable probability on 
$(\R^\infty, \mathcal B(\R)^{\otimes\infty})$ equipped with a full 
conditional probability $\Pi(\cdot \mid \cdot)$ in the sense of 
de~Finetti--R\'enyi. Suppose that for every $n \ge 1$ and every 
$k \in \{0, \ldots, n\}$,
\[
\Pi\!\left(Y_{n+1} \in I_k(Y_{1:n}) \,\middle|\, Y_{1:n}\right) 
= \frac{1}{n+1} \quad \text{a.s.}
\]
Then $\Pi$ is not conglomerable: there exist an event $B$ and a 
countable measurable partition $\{A_m\}_{m \ge 1}$ with $\Pi(A_m) > 0$ 
for all $m$ such that
\[
\Pi(B) < \inf_{m \ge 1} \Pi(B \mid A_m).
\]
Consequently, by Theorem~\ref{thm:heath_sudderth}, $\Pi$ admits a 
Dutch book.
\end{theorem}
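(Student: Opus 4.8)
The plan is to reduce the statement to the canonical rank recursion of Proposition~\ref{prop:hill_rank_recursion} and then exhibit a Lane--Sudderth partition on which the unconditional assessment falls strictly below all of its conditional counterparts. The engine is a forward/backward asymmetry that the hypothesis forces on $\Pi$. \emph{Forward}, the hypothesis is the rule itself: given the past $Y_{1:n}$, the next value is uniform over the $n+1$ order-statistic cells $I_k(Y_{1:n})$. \emph{Backward}, exchangeability upgrades this to a symmetric statement: relabelling which coordinate is held out, for any $n$ the conditional law of $Y_1$ given $Y_2,\dots,Y_{n+1}$ places mass exactly $1/(n+1)$ on each of the $n+1$ gaps these coordinates carve out of $\R$. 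First I would record both facts carefully as identities between de~Finetti--R\'enyi full conditionals, since every probability below is a finitely additive conditional and the two directions are conditioned on different, non-nested sub-algebras.

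Second, I would isolate the structural feature that makes a gap possible at all: the implied marginals cannot be tight. The exact-uniform-gap constraint holds for no $\sigma$-additive law, since under Theorem~\ref{thm:definetti} the draws are conditionally i.i.d.\ from some $\mu$ and the gap masses are then random uniform spacings, a.s.\ unequal to $1/(n+1)$; so no $\sigma$-additive exchangeable $\Pi$ satisfies the hypothesis, and Hill's finitely additive solutions \citep{hill1988direct} necessarily let mass escape to infinity. This non-tightness forces any countable partition with \emph{strictly positive} cells to be ``coarse at infinity'': a partition into bounded value-cells is null, while the pure insertion-rank process carries a $\sigma$-additive product-of-uniforms law, so any witnessing failure must be driven by a non-rank (location) event $B$ evaluated against cells that retain positive mass. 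The witnessing partition must therefore couple rank information (to keep cells positive) with location information (to feel the escaping mass).

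Third, following Lane and Sudderth, I would take $\{A_m\}_{m\ge 1}$ to be a rank-adapted first-passage partition---for instance indexed by the first stage after $1$ at which a new record reorders $Y_1$ relative to the incoming block---so that $\Pi(A_m)>0$ with $\sum_m \Pi(A_m)=1$, and take $B$ to be a location event for $Y_1$ of the ``$Y_1$ is extreme'' type. I would then compute $\Pi(B)$ from the forward rule and $\Pi(B\mid A_m)$ from the backward identity, and verify that conditioning on each cell systematically relocates $Y_1$ toward the escaping tail, pushing every conditional value strictly above the unconditional one, so that $\Pi(B)<\inf_m\Pi(B\mid A_m)$. The final step is immediate: this is exactly the hypothesis of the Heath--Sudderth theorem (Theorem~\ref{thm:heath_sudderth}), which converts the strict gap into a countable system of conditionally favorable bets summing to a sure loss.

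The hard part will be the third step: simultaneously securing strictly positive cell probabilities and a \emph{one-sided} (uniform) gap. As direct two- and three-coordinate computations show, finite symmetric configurations give conditional values that straddle $\Pi(B)$ rather than dominating it, so conglomerability is broken only in the genuine infinite limit, and the delicate point is to choose $B$ and $\{A_m\}$ so that the backward $1/(n+1)$ masses accumulate on one side as $n\to\infty$ without collapsing the cell probabilities to zero. Making these finitely additive conditional computations rigorous---rather than passing illegitimately to a $\sigma$-additive limit---is the crux, and is precisely where the de~Finetti--R\'enyi full-conditional formalism, not ordinary regular conditional probabilities, is required.
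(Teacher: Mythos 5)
Your high-level strategy is right (exhibit $B$ and $\{A_m\}$ with a one-sided strict gap, then invoke Theorem~\ref{thm:heath_sudderth}), and your diagnosis that the difficulty is securing a \emph{uniform} one-sided gap in the infinite limit is exactly the right worry. But the proposal stops at precisely that point: the central construction is only described (a ``rank-adapted first-passage partition,'' an event of the type ``$Y_1$ is extreme''), and the key inequality $\Pi(B)<\inf_m\Pi(B\mid A_m)$ is asserted rather than verified, with the verification explicitly deferred as ``the hard part.'' As it stands this is a plan for a proof, not a proof.

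Moreover, the specific construction you sketch faces an obstacle you do not address. The hypothesis constrains only the conditional \emph{rank-gap} structure; it says nothing about the location marginal of $Y_1$ (indeed Hill's finitely additive solutions are diffuse and non-tight, as you yourself observe). Consequently, for a genuine location event such as $\{Y_1>c\}$ the quantity $\Pi(B)$ is simply not determined by the hypothesis, so the step ``compute $\Pi(B)$ from the forward rule'' cannot be carried out; and if instead $B$ is a pure rank event, the backward $1/(n+1)$ identities tend to produce conditional values that straddle $\Pi(B)$, as your own two- and three-coordinate computations indicate. The paper's route (Appendix~\ref{app:lane_sudderth}, following Lane--Sudderth) sidesteps both problems: it takes $\{A_m\}$ to be a countable partition generating the tail $\sigma$-field and $B$ a permutation-symmetric tail event built from the rank indicators $\mathbf{1}\{R_n>n/2\}$; the de~Finetti--R\'enyi full-conditional structure forces $\Pi(B\mid A_m)\in\{0,1\}$, and the construction arranges $\Pi(B\mid A_m)=1$ for every atom, so the one-sided uniform gap is automatic and the entire burden reduces to showing $\Pi(B)<1$, which follows from exchangeability by a symmetry/counting argument. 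To complete your argument you would need either to import that tail-event mechanism or to supply the quantitative uniform estimate you currently only promise.
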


\begin{proof}
See Appendix~\ref{app:lane_sudderth} for the explicit construction of 
$B$ and $\{A_m\}$. The key step is that the uniform rank constraints, 
applied at all sample sizes simultaneously, create an incompatibility 
between unconditional and conditional assessments that cannot be 
resolved within finite additivity while preserving conglomerability.
\end{proof}

\subsubsection*{Transfer to kernelized conformal prediction}

The connection to CP is now immediate. In the no-covariate case with a 
monotone score, CP's rank pivot induces the same order-statistic 
partition (Proposition~\ref{prop:cp_is_hill}), and the 
$A_{(n)}$-bridge (Definition~\ref{def:cp_bridge_kernel}) enforces 
exactly \eqref{eq:An_cell_assignment}.

\begin{corollary}[Nonconglomerability of bridged conformal prediction]
\label{cor:cp_nonconglom}
Any finitely additive exchangeable law on $\R^\infty$ whose one-step 
predictive assessments agree with the $A_{(n)}$-bridged conformal 
mechanism for all $n$ fails conglomerability and admits a Dutch book.
\end{corollary}

\begin{remark}[What the obstruction does and does not concern]
The obstruction is not marginal coverage, CP delivers that by 
construction. The obstruction concerns what happens when one promotes 
CP outputs to a sequential belief state: the resulting object cannot 
be conglomerable if it preserves the rank-calibration constraints at 
all stages. This is the precise sense in which "updating the CP belief" 
is problematic as an interpretation of sequential conditioning.
\end{remark}

\subsection{Consequences: no coherent \texorpdfstring{$\sigma$}{σ}-additive joint law}
\label{subsec:cp_no_joint}

Nonconglomerability is a finite-additivity pathology. A stronger statement, and the one
we exploit later, is that the rank-gap constraints \eqref{eq:An_cell_assignment} cannot
arise as the regular conditional distributions of any \emph{countably additive}
exchangeable law on $\R^\infty$.

\begin{lemma}[Countable additivity implies conglomerability]
\label{lem:sigma_additive_conglomerable}
Let $P$ be a countably additive probability on a standard Borel space, and let
$\{A_m\}_{m\ge1}$ be a countable measurable partition with $P(A_m)>0$ for all $m$.
Let $P(\cdot\mid A_m)$ be the usual conditional probabilities. Then for every event $B$,
\[
P(B)\ \in\ \big[\inf_{m\ge1} P(B\mid A_m),\ \sup_{m\ge1} P(B\mid A_m)\big].
\]
In particular, $P$ cannot satisfy $P(B)<\inf_m P(B\mid A_m)$ or $P(B)>\sup_m P(B\mid A_m)$.
\end{lemma}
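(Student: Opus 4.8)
The plan is to reduce the claim to the countable additivity of $P$ applied to the partition $\{A_m\}$, together with the defining relation $P(B \cap A_m) = P(B \mid A_m)\,P(A_m)$. First I would write $B = \bigcup_{m \ge 1}(B \cap A_m)$, which is a disjoint union because $\{A_m\}$ partitions the whole space. Countable additivity then gives
\[
P(B) = \sum_{m \ge 1} P(B \cap A_m) = \sum_{m \ge 1} P(B \mid A_m)\,P(A_m),
\]
so $P(B)$ is exhibited as a weighted average of the conditional values $P(B \mid A_m)$ with nonnegative weights $P(A_m)$ that sum to $1$ (again by countable additivity applied to $B = \mathcal{Y}$, using $P(A_m) > 0$ and $\sum_m P(A_m) = 1$).

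The conclusion is then the elementary fact that any convex combination of real numbers lies between their infimum and supremum. Writing $c_m \defeq P(B \mid A_m) \in [0,1]$ and $w_m \defeq P(A_m) \ge 0$ with $\sum_m w_m = 1$, I would bound
\[
\inf_{m} c_m = \Big(\sum_m w_m\Big)\inf_m c_m \le \sum_m w_m c_m \le \Big(\sum_m w_m\Big)\sup_m c_m = \sup_m c_m,
\]
where each inequality is termwise ($w_m \inf_\ell c_\ell \le w_m c_m \le w_m \sup_\ell c_\ell$) and then summed, justified by nonnegativity of the weights. This yields $P(B) \in [\inf_m P(B\mid A_m),\ \sup_m P(B\mid A_m)]$, and the final ``in particular'' statement is the immediate contrapositive.

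There is essentially no deep obstacle here, since this is the standard verification that $\sigma$-additivity forces conglomerability (the converse to the pathology in Theorem \ref{thm:conglomerability}); the only points requiring care are bookkeeping. I would make sure the series $\sum_m P(B \mid A_m) P(A_m)$ is handled as a sum of nonnegative terms so that rearrangement and the termwise bounds are unconditionally valid, and I would note that $P(B \mid A_m)$ is well defined precisely because $P(A_m) > 0$ for every $m$. The standard Borel hypothesis is not strictly needed for this averaging argument but guarantees that the ordinary conditional probabilities $P(\cdot \mid A_m)$ behave as regular conditionals, matching the conditioning notion used against the finitely additive counterexample.
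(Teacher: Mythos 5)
Your proposal is correct and follows exactly the paper's own argument: decompose $B$ over the partition via countable additivity, recognize $P(B)=\sum_m P(B\mid A_m)P(A_m)$ as a convex combination, and conclude it lies between the infimum and supremum of the conditional values. The additional bookkeeping you supply (nonnegativity of the terms, weights summing to one, well-definedness of $P(\cdot\mid A_m)$ since $P(A_m)>0$) is a faithful elaboration of the same proof.
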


\begin{proof}
By the law of total probability,
\[
P(B)=\sum_{m\ge1} P(B\cap A_m)=\sum_{m\ge1} P(B\mid A_m)\,P(A_m),
\]
which is a convex combination of the numbers $\{P(B\mid A_m)\}_{m\ge1}$ with weights
$\{P(A_m)\}_{m\ge1}$. Any convex combination lies in the closed interval between the
infimum and supremum of the support values.
\end{proof}

\begin{theorem}[No $\sigma$-additive extension of $A_{(n)}$ / bridged conformal ranks]
\label{thm:no_joint}
There does not exist a countably additive exchangeable probability measure
$P$ on $(\R^\infty,\mathcal B(\R)^{\otimes\infty})$ such that, for every $n\ge1$ and every
$k\in\{0,\ldots,n\}$,
\[
P\!\left(Y_{n+1}\in I_k(Y_{1:n}) \,\middle|\, Y_{1:n}\right)
\;=\;\frac{1}{n+1}
\quad\text{a.s.}
\]
Equivalently, there is no $\sigma$-additive exchangeable joint law whose one-step
regular conditionals reproduce the no-covariate $A_{(n)}$ rank calibration at all
sample sizes.
\end{theorem}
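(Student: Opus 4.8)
The plan is to combine Lemma~\ref{lem:sigma_additive_conglomerable} with the nonconglomerability obstruction already established, reducing the $\sigma$-additive nonexistence to the finitely additive pathology. Let me sketch this.

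Key question: what is the cleanest route? The theorem says no $\sigma$-additive exchangeable law has these regular conditionals. The natural strategy is contradiction via conglomerability.The plan is to argue by contradiction, leveraging the contrast between Lemma~\ref{lem:sigma_additive_conglomerable} (countable additivity forces conglomerability) and the nonconglomerability already established for any rule satisfying the $A_{(n)}$ rank constraints. So suppose, toward a contradiction, that a countably additive exchangeable law $P$ on $\R^\infty$ exists whose one-step regular conditionals satisfy $P(Y_{n+1}\in I_k(Y_{1:n})\mid Y_{1:n})=1/(n+1)$ a.s.\ for all $n$ and all $k$. Being $\sigma$-additive on a standard Borel space, $P$ admits genuine regular conditional probabilities, so these conditional statements are well-defined and the law of total probability holds. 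The goal is to exhibit the same event $B$ and countable partition $\{A_m\}$ used in the Lane--Sudderth construction (Theorem~\ref{thm:conglomerability}) and derive $P(B)<\inf_m P(B\mid A_m)$, contradicting Lemma~\ref{lem:sigma_additive_conglomerable}.

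First I would verify that the conglomerability-violating construction from Theorem~\ref{thm:conglomerability} depends only on the rank-gap identities \eqref{eq:An_cell_assignment} and not on finite additivity per se: the partition $\{A_m\}$ and event $B$ are measurable cylinder-type events built from order statistics, and the strict inequality $\Pi(B)<\inf_m \Pi(B\mid A_m)$ was derived purely from the uniform-rank constraints applied across all sample sizes. Since our hypothetical $P$ satisfies exactly those constraints (now as genuine $\sigma$-additive regular conditionals), the \emph{same} computation yields the \emph{same} conditional values $P(B\mid A_m)$ and the \emph{same} unconditional bound $P(B)$, hence the identical strict inequality. Second, I would invoke Lemma~\ref{lem:sigma_additive_conglomerable} directly: for a countably additive $P$ and any countable partition with $P(A_m)>0$, the law of total probability forces $P(B)$ to be a convex combination of the $P(B\mid A_m)$, hence $P(B)\ge\inf_m P(B\mid A_m)$. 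This flatly contradicts the strict inequality, completing the argument. The final clause of the theorem (the ``equivalently'' restatement) then follows immediately, since regular conditionals of a $\sigma$-additive exchangeable law are precisely the one-step predictive kernels in question.

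The main obstacle I anticipate is a subtle measure-theoretic matching issue: the Lane--Sudderth construction was stated for \emph{finitely additive} full conditional probabilities in the de~Finetti--R\'enyi sense, whereas here the conditionals are \emph{regular conditional distributions} defined only almost surely. I would need to check that the partition $\{A_m\}$ can be chosen so that the relevant conditioning events have positive $P$-measure (so that $P(B\mid A_m)$ is the honest ratio $P(B\cap A_m)/P(A_m)$, not merely an a.s.-defined object), and that the a.s.\ rank identities transfer to exact conditional values on these positive-measure events. Concretely, I expect the partition to be indexed by an event such as ``the rank of some designated coordinate among a growing prefix equals a fixed value,'' whose probabilities are determined by the uniform-rank law and are manifestly positive; on such events the conditional computation is exact rather than merely almost sure. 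Verifying this compatibility, rather than any deep new idea, is where the real care lies, and it is where I would concentrate the detailed work in the appendix.
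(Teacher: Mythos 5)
Your proposal is correct and follows essentially the same route as the paper's proof: assume a $\sigma$-additive exchangeable $P$ exists, apply the Lane--Sudderth nonconglomerability result (Theorem~\ref{thm:conglomerability}) to its regular conditionals to obtain $P(B)<\inf_m P(B\mid A_m)$, and contradict Lemma~\ref{lem:sigma_additive_conglomerable}. Your added attention to the finitely-additive-versus-regular-conditional matching issue is a reasonable refinement of the same argument, not a different approach.
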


\begin{proof}[Proof by contradiction via conglomerability]
Assume for contradiction that such a countably additive exchangeable $P$ exists.
Since $(\R^\infty,\mathcal B(\R)^{\otimes\infty})$ is a standard Borel space, $P$ admits
regular conditional probabilities, and these conditionals satisfy the stated equal-gap
constraints by hypothesis.

Now apply Lane--Sudderth's theorem (Theorem~\ref{thm:conglomerability}) to the sequential
predictive system induced by these regular conditionals. Their result yields an event
$B$ and a countable measurable partition $\{A_m\}_{m\ge1}$ with $P(A_m)>0$ such that
\[
P(B)\;<\;\inf_{m\ge1} P(B\mid A_m).
\]
But this is impossible under countable additivity: by Lemma~\ref{lem:sigma_additive_conglomerable},
a countably additive $P$ must satisfy
$P(B)\in[\inf_m P(B\mid A_m),\sup_m P(B\mid A_m)]$. This contradiction shows no such
$\sigma$-additive exchangeable $P$ can exist.
\end{proof}

\begin{remark}[What the theorem does and does not say]
At each fixed $n$, the rank-gap assignment \eqref{eq:An_cell_assignment} is a perfectly
legitimate conditional probability assignment on the finite algebra generated by the
random order-statistic partition. The obstruction in Theorem~\ref{thm:no_joint} is
\emph{sequential}: one cannot choose such assignments for all $n$ so that they arise as
regular conditional distributions under a single countably additive exchangeable joint
law on the infinite product space.
\end{remark}

\subsubsection*{Pragmatic Interpretation}
Together, Theorems~\ref{thm:conglomerability} and \ref{thm:no_joint} formalize the sense
in which conformal prediction aligns with the fiducial/direct-probability tradition:
it is a rank-calibrated predictive assessment with finite-sample guarantees, but when
\emph{promoted} (via a bridge) to a sequential predictive object it encounters the same
nonconglomerability and non-extendability obstructions identified by Lane and Sudderth
for $A_{(n)}$-type rules. This is precisely why CP is reliable as a calibrated wrapper
at a fixed stage, but cannot generally serve as a coherent probabilistic state variable
for sequential decision-making without imposing additional structure that breaks the
distribution-free premise.

\section{Impossibility Results}
\label{sec:impossibility}

This section collects formal separations between conformal prediction (CP) and
Bayesian predictive inference, stated in the measure-theoretic language of
Section~\ref{sec:foundations}. Bayesian prediction produces a regular
conditional probability kernel (hence $\sigma$-additive across measurable
events), while CP is natively a rank-calibrated \emph{set-valued} acceptance
rule and only becomes a full predictive distribution after an extra
\emph{kernelization} choice (e.g.\ conformal predictive distributions via
randomized ranks).

Two tables structure the operational reading of the theorems.
Table~\ref{tab:so_what} maps each separation to a concrete engineering failure
mode and a safe-use mitigation. Table~\ref{tab:dependency_map} records how the
separations reinforce each other logically, so that one does not treat them as
independent objections. 

\subsection{Non-extensionality}
\label{subsec:nonextensionality}

Extensionality (Definition~\ref{def:extensionality}) requires that the induced
predictive rule depend on the data only through the conditional law
$P(Y\mid X)$, not through the design distribution $P(X)$. Split conformal
prediction violates this principle whenever the conditional law is
heteroscedastic and the calibration step pools residual magnitudes \emph{over
the marginal $P(X)$}.

\begin{theorem}[Non-extensionality of split conformal prediction]
\label{thm:extensionality}
There exist two joint laws $P_1$ and $P_2$ on $\mathcal X\times\mathcal Y$ with
the same conditional distribution $P_1(Y\in\cdot\mid X)=P_2(Y\in\cdot\mid X)$
but different marginals $P_1(X)\neq P_2(X)$ such that the split conformal
prediction sets differ with positive probability (as random set-valued maps of
$(x;Z_{1:n})$), even at the same test covariate $x$.
\end{theorem}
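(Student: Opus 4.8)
The plan is to exhibit an explicit heteroscedastic counterexample on a two-point design, exploiting the fact that split conformal depends on the calibration data only through a single \emph{pooled} quantile whose law is governed by the covariate marginal. Take $\mathcal X=\{0,1\}$ and $\mathcal Y=\R$, and fix the common conditional law $P_i(Y\in\cdot\mid X=j)=N(0,\sigma_j^2)$ with $\sigma_0\neq\sigma_1$, so that $P(Y\mid X)$ is heteroscedastic. Let $P_1,P_2$ differ \emph{only} through the marginal $P_i(X{=}1)=\pi_i$ with $\pi_1\neq\pi_2$, so that $P_1(Y\in\cdot\mid X)=P_2(Y\in\cdot\mid X)$ holds by construction. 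To remove any dependence on training randomness, take the pre-trained regressor to be the constant map $\widehat f\equiv 0$ (an admissible estimator, identical under $P_1$ and $P_2$) and use the absolute-residual score $r(z;\widehat f)=\abs{y}$. Then, by the split conformal interval $[\widehat f(x)-\widehat q,\ \widehat f(x)+\widehat q]$ of Section~\ref{subsec:cp_regression_measure}, the prediction set at any test covariate $x$ is the symmetric interval $[-\widehat q,\widehat q]$, determined entirely by the empirical $(1-\alpha)$ quantile $\widehat q$ of the calibration residuals $R_i=\abs{Y_i}$.

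The mechanism I would then isolate is exactly the ``pooling over $P(X)$'' phenomenon. The conditional residual $R\mid X=j$ is half-normal with scale $\sigma_j$, so the unconditional calibration-residual law is the mixture $F_\pi=(1-\pi)H_{\sigma_0}+\pi H_{\sigma_1}$, whose mixing weight is precisely $P(X{=}1)$. Because $\sigma_0\neq\sigma_1$, the two half-normal components are stochastically ordered, whence $F_\pi$ is strictly decreasing in $\pi$ at every level in the interior of the common support; choosing $\alpha$ so that the target quantile falls in this overlap region, the population quantile $q^\ast(\pi)\defeq F_\pi^{-1}(1-\alpha)$ is strictly monotone in $\pi$, and therefore $q^\ast(\pi_1)\neq q^\ast(\pi_2)$. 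This is the analytic crux: the same conditional law, filtered through two different designs, yields two different pooled calibration quantiles.

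It then remains to transfer this population separation to the finite-sample random set map. For a calibration set of size $m$, the empirical quantile $\widehat q$ is consistent for $q^\ast(\pi)$, so fixing a threshold $t$ strictly between $q^\ast(\pi_1)$ and $q^\ast(\pi_2)$ gives, for all large $m$, values $\Prob_{P_1}(\widehat q>t)$ and $\Prob_{P_2}(\widehat q>t)$ on opposite sides of $\tfrac12$; hence the laws of $\widehat q$, and so the laws of the random interval $C_n(x)=[-\widehat q,\widehat q]$ at the \emph{same} test point $x$, are distinct. An even cleaner exact route avoids asymptotics: taking $\alpha<1/(m+1)$ forces $\widehat q=\max_i R_i$, whose distribution depends on $\pi$ for every fixed $m$ by stochastic monotonicity of the maximum in the mixing weight, giving a finite-sample separation directly. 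Either route produces two joint laws agreeing on $P(Y\mid X)$ whose prediction-set maps differ on an event of positive probability, which is the asserted non-extensionality.

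The main obstacle I anticipate is not the construction but pinning down the precise sense of ``differ with positive probability'' for set-valued maps generated under two \emph{different} laws. I would resolve this by reducing everything to the scalar $\widehat q$: since both maps are the deterministic interval-functional $\widehat q\mapsto[-\widehat q,\widehat q]$ applied to this statistic, it suffices to exhibit a threshold $t$ with $\Prob_{P_1}(\widehat q>t)\neq\Prob_{P_2}(\widehat q>t)$, i.e.\ to show the induced laws on intervals are unequal and that, under the natural coupling through the empirical-CDF inverse, the two intervals straddle $t$ with positive probability. The supporting lemma that $q^\ast(\cdot)$ is strictly monotone (equivalently, that $F_\pi$ is strictly decreasing in $\pi$ in the support overlap) is the one place requiring care, and it follows from first-order stochastic ordering of the half-normal family in its scale parameter together with the choice of $\alpha$ placing the quantile inside that overlap.
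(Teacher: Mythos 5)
Your proposal is correct and follows essentially the same route as the paper's proof: a fixed regressor $\widehat f\equiv 0$ with absolute-residual scores, the observation that the pooled calibration-residual law is a $P(X)$-mixture of heteroscedastic conditional residual laws whose $(1-\alpha)$ quantiles therefore differ across marginals, and a consistency argument transferring the population quantile gap to the empirical quantile $\widehat q$ and hence to the interval $[-\widehat q,\widehat q]$. Your two-point design makes the quantile-separation step slightly more elementary (stochastic ordering in the mixing weight gives disagreement at every level), and your exact finite-sample variant via $\widehat q=\max_i R_i$ is a nice non-asymptotic refinement the paper does not include, but the architecture of the argument is the same.
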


\begin{proof}
Let $\mathcal X=[0,1]$ and $\mathcal Y=\mathbb R$. Fix a nonconstant measurable
scale function $\sigma(x)=1+x$. Define a common conditional kernel
\[
K(\,\cdot\,\mid x)\;=\;N(0,\sigma(x)^2).
\]
Define two joint laws $P_1,P_2$ by choosing different covariate marginals and
the same conditional kernel:
\[
P_1:\ X\sim \mathrm{Unif}[0,1],\qquad
P_2:\ X\sim \mathrm{Beta}(2,5),
\qquad
Y\mid X=x \sim K(\cdot\mid x).
\]
Then $P_1(Y\mid X)=P_2(Y\mid X)$ by construction, but $P_1(X)\neq P_2(X)$.

Run split conformal with a fixed split $\mathcal I_1,\mathcal I_2$ with
$|\mathcal I_2|=m$, and take the (correct) regression fit $\widehat f\equiv 0$
(fitted or fixed). With absolute residual scores,
\[
R_i \;=\; |Y_i-\widehat f(X_i)|\;=\;|Y_i|,\qquad i\in\mathcal I_2.
\]
Conditional on $X_i=x$, $R_i\stackrel{d}{=}\sigma(x)\,|Z|$ with $Z\sim N(0,1)$.
Hence the unconditional residual distribution under $P_j$ is the scale-mixture
\[
\mathrm{Law}_{P_j}(R)\;=\;\int \mathrm{Law}\bigl(\sigma(x)\,|Z|\bigr)\,P_j(dx),
\qquad j\in\{1,2\}.
\]
Because $\sigma$ is nonconstant and $P_1(X) \neq P_2(X)$, the induced residual laws 
$\mathrm{Law}_{P_1}(R)$ and $\mathrm{Law}_{P_2}(R)$ are distinct probability measures on 
$\mathbb{R}_+$. To see this, note that for any $r > 0$,
\[
F_j(r) := P_j(\sigma(X)|Z| \leq r) = \int_0^1 \Phi\bigl(r/\sigma(x)\bigr) \, P_j(dx),
\]
where $\Phi$ is the standard normal CDF. Since $x \mapsto \Phi(r/\sigma(x))$ is strictly 
monotone in $x$ for each $r > 0$ and $P_1 \neq P_2$, we have $F_1 \neq F_2$ as functions. 
Distinct continuous CDFs on $\mathbb{R}_+$ cannot agree at all quantile levels: if 
$F_1(\xi) = F_2(\xi) = 1 - \alpha$ for all $\alpha \in (0,1)$, then $F_1 = F_2$. 
Hence there exists $\alpha \in (0,1)$ such that $q_{1-\alpha}^{(1)} \neq q_{1-\alpha}^{(2)}$.

Therefore there exists an $\alpha\in(0,1)$ such that the population
$(1-\alpha)$-quantiles differ: $q^{(1)}_{1-\alpha}\neq q^{(2)}_{1-\alpha}$.

Let $\widehat q$ be the empirical $(1-\alpha)$ quantile of $(R_i)_{i\in\mathcal I_2}$
(with any fixed tie convention). By Glivenko--Cantelli for the empirical CDF on
$\mathbb R_+$, $\widehat q\to q^{(j)}_{1-\alpha}$ in probability under $P_j$.
Therefore, for $m$ large enough, there is a threshold $q^\star$ strictly
between $q^{(1)}_{1-\alpha}$ and $q^{(2)}_{1-\alpha}$ such that
\[
\Prob_{P_1}\!\bigl(\widehat q < q^\star\bigr)\;>\;0
\quad\text{and}\quad
\Prob_{P_2}\!\bigl(\widehat q > q^\star\bigr)\;>\;0 .
\]
But for any fixed test covariate $x_0\in[0,1]$, split conformal returns the interval
\[
C_n(x_0;Z_{1:n}) = [-\widehat q,\widehat q].
\]
Hence the induced set-valued predictors differ with positive probability under
$P_1$ versus $P_2$ while $P(Y\mid X)$ is identical, violating extensionality. For extended derivations and additional discussion see ~\ref{thm:extensionality_app}.
\end{proof}

\begin{remark}[Operational reading]
The mechanism is exactly the one summarized in Table~\ref{tab:so_what}:
calibration is performed against a marginal residual law, which is a $P(X)$-mixture
under heteroscedasticity. Thus CP can change across deployments under covariate shift
even when the conditional kernel $P(Y\mid X)$ is stable. This is the concrete
``transport risk'' encoded by non-extensionality.
\end{remark}

\subsection{A Baire-category separation for kernelized CP}
\label{subsec:baire}

Theorems~\ref{thm:conglomerability}--\ref{thm:no_joint} show that demanding a
single $\sigma$-additive joint law with conformal/Hill ranks as its regular
conditionals is inconsistent. The right object for ``CP vs.\ Bayes'' at this
level is therefore the \emph{family of one-step predictive kernels}. The
separation result below is intentionally \emph{structural}: it does not rely on
a special ``with vs.\ without covariates'' story, but on a generic rigidity
property shared by conformal mechanisms, namely, that the history is compressed
through a \emph{calibration $\sigma$-field} generated by score comparisons (and
possibly additional randomization or side-information).

\subsubsection*{Kernel space as a Baire topology)}
Let $\mathcal Y$ be Polish and let $\mathcal P(\mathcal Y)$ carry the weak
topology. Fix once and for all:
(i) a countable set of bounded Lipschitz test functions
$\{\varphi_j\}_{j\ge1}\subset \mathrm{BL}_1(\mathcal Y)$ that is dense in
$\mathrm{BL}_1(\mathcal Y)$ under $\|\cdot\|_{\mathrm{BL}}$, and
(ii) for each $n\ge1$ a non-atomic Borel probability $\lambda_n$ on $\mathcal Y^n$
with full support.

For each $n$, let $\mathcal K_n$ be the set of measurable kernels
$\kappa_n:\mathcal Y^n\to\mathcal P(\mathcal Y)$. Associate to $\kappa_n$ the
sequence of bounded measurable functions
\[
T_n(\kappa_n)\;\defeq\;\bigl(f_{n,j}\bigr)_{j\ge1},
\qquad
f_{n,j}(y_{1:n}) \defeq \int_{\mathcal Y}\varphi_j(y)\,\kappa_n(dy\mid y_{1:n}).
\]
Define the metric
\begin{equation}
\label{eq:kernel_metric_L2}
d_n(\kappa_n,\kappa_n')\;\defeq\;
\sum_{j\ge1}2^{-j}\Bigl(\,
\|f_{n,j}-f'_{n,j}\|_{L^2(\lambda_n)}\wedge 1
\Bigr),
\end{equation}
where $f'_{n,j}$ corresponds to $\kappa_n'$.
Let $\mathcal K\defeq\prod_{n\ge1}\mathcal K_n$ with the product metric
\[
d(\kappa,\kappa')\defeq\sum_{n\ge1}2^{-n}\bigl(d_n(\kappa_n,\kappa_n')\wedge 1\bigr).
\]
(With this choice, $\mathcal K$ is a Baire space; for instance it can be realized
as a $G_\delta$ subset of a complete metric product of $L^2$-coordinate embeddings.)

\begin{definition}[Bayesian and conformal kernel classes]
\label{def:kernel_classes}
Let $\mathcal{B} \subset \mathcal{K}$ denote the set of \emph{Bayesian predictive kernels}: 
$\kappa \in \mathcal{B}$ if there exists a prior $Q$ on $\mathcal{P}(\mathcal{Y})$ such that, 
for all $n$ and all histories $y_{1:n}$,
\[
\kappa_n(B \mid y_{1:n}) = \int_{\mathcal{P}(\mathcal{Y})} \mu(B) \, Q(d\mu \mid y_{1:n}),
\quad B \in \mathcal{B}(\mathcal{Y}),
\]
where $Q(\cdot \mid y_{1:n})$ is the posterior on $\mathcal{P}(\mathcal{Y})$ obtained by 
updating $Q$ under the i.i.d.\ sampling model $Y_1, \ldots, Y_n \mid \mu \stackrel{\text{iid}}{\sim} \mu$.

Fix a \emph{single} conformal mechanism: a score construction (possibly depending on 
covariates and auxiliary randomness) together with a kernelization convention 
(e.g., randomized ranks or conformal predictive distributions). Let 
$\mathcal{C} \subset \mathcal{K}$ denote the corresponding class of (kernelized) 
conformal predictive kernels induced by this mechanism.

\begin{remark}[What determines $\mathcal{C}$]
\label{rem:what_determines_C}
The class $\mathcal{C}$ depends on the choice of score function, any data-splitting 
or fitting procedure, the tie-breaking convention, and the kernelization rule that 
promotes set-valued outputs to probability measures. Different choices yield different 
classes $\mathcal{C}$; Theorem~\ref{thm:meagre} applies to each such class individually, 
and Remark~\ref{rem:countable_mechanisms} extends the result to countable families.
\end{remark}
\end{definition}

\subsubsection*{Calibration $\sigma$-fields and conformal rigidity}
The defining structural feature of $\mathcal C$ is that the history is not used
in its full Borel complexity; instead it enters through the $\sigma$-field
generated by the calibration summary produced by the conformal mechanism.

\begin{definition}[Calibration $\sigma$-field at stage $n$]
\label{def:calib_sigma}
Fix $n$ and a conformal mechanism. Let $\mathcal G_n$ be the $\sigma$-field on
$\mathcal Y^n$ generated by the \emph{calibration summary} at stage $n$, that is,
by whatever objects the mechanism uses to compute the prediction set/distribution
(e.g.\ score comparisons or ranks, bin or stratum membership, local weights,
tie-break variables, fitted-model state, or other measurable summaries). We call
$\mathcal G_n$ the \emph{calibration $\sigma$-field}.
\end{definition}

\begin{definition}[$\mathcal G_n$-measurable kernels at stage $n$]
\label{def:G_measurable}
A kernel $\kappa_n\in\mathcal K_n$ is \emph{$\mathcal G_n$-measurable} if for
every $j\ge1$ the map
\[
y_{1:n}\ \longmapsto\ \int \varphi_j(y)\,\kappa_n(dy\mid y_{1:n})
\]
is $\mathcal G_n$-measurable.
\end{definition}

For a fixed mechanism, kernelized conformal predictive distributions are
$\mathcal G_n$-measurable by construction: once the calibration summary (and any
tie-breaking randomness) is fixed, the output kernel is determined.

\begin{theorem}[Meagre intersection of conformal and Bayesian kernels]
\label{thm:meagre}
Assume that for some $n$ the calibration $\sigma$-field $\mathcal G_n$ is a
proper sub-$\sigma$-field of $\mathcal B(\mathcal Y^n)$ (equivalently, the
conformal mechanism discards some Borel information in the history at stage $n$).
Then in the kernel space $(\mathcal K,d)$, the intersection $\mathcal B\cap\mathcal C$
is meagre.
\end{theorem}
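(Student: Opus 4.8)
The plan is to prove the sharper statement that the conformal class $\mathcal C$ is \emph{already} nowhere dense in $(\mathcal K,d)$; since $\mathcal B\cap\mathcal C\subseteq\mathcal C$, meagreness of the intersection is then immediate, and the Bayesian side plays no essential role beyond the trivial inclusion. Fix the stage $n$ at which $\mathcal G_n$ is a proper sub-$\sigma$-field of $\mathcal B(\mathcal Y^n)$. Let $\mathcal M_n\subseteq\mathcal K_n$ be the set of kernels $\kappa_n$ all of whose coordinates $f_{n,j}(y_{1:n})=\int\varphi_j\,\kappa_n(dy\mid y_{1:n})$ lie in the closed subspace $L^2(\mathcal G_n,\lambda_n)\subseteq L^2(\lambda_n)$. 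By the rigidity recorded after Definition~\ref{def:G_measurable}, a kernelized conformal kernel uses the history only through the calibration summary of Definition~\ref{def:calib_sigma}, so each coordinate $f_{n,j}$ is $\mathcal G_n$-measurable and in particular lies in $L^2(\mathcal G_n,\lambda_n)$. Hence $\Pi_n(\mathcal C)\subseteq\mathcal M_n$, i.e.\ $\mathcal C\subseteq\Pi_n^{-1}(\mathcal M_n)$, where $\Pi_n:\mathcal K\to\mathcal K_n$ is the coordinate projection. It therefore suffices to prove that $\mathcal M_n$ is nowhere dense in $(\mathcal K_n,d_n)$ and that $\Pi_n^{-1}$ carries this property back to $\mathcal K$.

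Closedness of $\mathcal M_n$ is immediate from the metric \eqref{eq:kernel_metric_L2}: if $d_n(\kappa_n^{(k)},\kappa_n)\to0$ with $\kappa_n^{(k)}\in\mathcal M_n$, then each coordinate converges, $f_{n,j}^{(k)}\to f_{n,j}$ in $L^2(\lambda_n)$, and closedness of the subspace $L^2(\mathcal G_n,\lambda_n)$ forces every limit $f_{n,j}$ into it, so $\kappa_n\in\mathcal M_n$. For \emph{empty interior}, I would perturb an arbitrary $\kappa_n\in\mathcal M_n$ so as to inject $\mathcal G_n$-non-measurable dependence in a single coordinate. Choose two laws $\nu_0\neq\nu_1$ in $\mathcal P(\mathcal Y)$; by density of $\{\varphi_j\}$ in $\mathrm{BL}_1(\mathcal Y)$ there is an index $j_0$ with $c\defeq\int\varphi_{j_0}\,d\nu_1-\int\varphi_{j_0}\,d\nu_0\neq0$. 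Pick a Borel set $A\subseteq\mathcal Y^n$ whose indicator is \emph{not} $\lambda_n$-a.e.\ equal to a $\mathcal G_n$-measurable function, and for $\epsilon>0$ set
\[
\kappa_n^{\epsilon}(\cdot\mid y_{1:n})\defeq(1-\epsilon)\,\kappa_n(\cdot\mid y_{1:n})+\epsilon\bigl(\mathbf{1}_A(y_{1:n})\,\nu_1+\mathbf{1}_{A^c}(y_{1:n})\,\nu_0\bigr),
\]
which is a genuine probability kernel (a pointwise convex combination). Its $j_0$-coordinate equals a $\mathcal G_n$-measurable function plus $\epsilon\,c\,\mathbf{1}_A$, which is not $\mathcal G_n$-measurable, so $\kappa_n^{\epsilon}\notin\mathcal M_n$; since $\abs{f_{n,j}^{\epsilon}-f_{n,j}}\le2\epsilon$ pointwise for every $j$, we also have $d_n(\kappa_n^{\epsilon},\kappa_n)\le2\epsilon\to0$. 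Thus every point of $\mathcal M_n$ is a $d_n$-limit of points outside $\mathcal M_n$, so $\mathcal M_n$ (closed) has empty interior and is nowhere dense.

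Next I would transport this through $\Pi_n$. Because $d$ induces the product topology on $\mathcal K=\prod_{n}\mathcal K_n$, the projection $\Pi_n$ is continuous, open, and surjective. Continuity makes $\Pi_n^{-1}(\mathcal M_n)$ closed; and if it contained a nonempty open set $U$, then $\Pi_n(U)$ would be a nonempty open subset of $\mathcal K_n$ lying in $\mathcal M_n$, contradicting empty interior. Hence $\Pi_n^{-1}(\mathcal M_n)$ is nowhere dense in $(\mathcal K,d)$, and $\mathcal B\cap\mathcal C\subseteq\mathcal C\subseteq\Pi_n^{-1}(\mathcal M_n)$ is meagre, as claimed.

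The one genuinely load-bearing step is the selection of the witness set $A$, i.e.\ translating the hypothesis ``$\mathcal G_n$ discards some Borel information'' into a strict $L^2$ gap $L^2(\mathcal G_n,\lambda_n)\subsetneq L^2(\lambda_n)$. This is where non-atomicity and full support of $\lambda_n$ are essential: a $\mathcal G_n$ differing from $\mathcal B(\mathcal Y^n)$ only on $\lambda_n$-null sets would collapse the two $L^2$ spaces and void the perturbation, so the hypothesis must be read as proper containment \emph{modulo $\lambda_n$}. For a canonical rank/order-statistic mechanism $\mathcal G_n$ is the permutation-invariant $\sigma$-field, and an antisymmetric coordinate functional such as $y_1-y_2$ exhibits a Borel set with non-$\mathcal G_n$-measurable indicator on a $\lambda_n$-positive set, supplying $A$. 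Everything else—kernel validity of $\kappa_n^{\epsilon}$, uniform $d_n$-smallness, and openness of $\Pi_n$—is routine bookkeeping once this gap is secured.
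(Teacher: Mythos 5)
Your proof is correct and follows essentially the same route as the paper's: identify the stage-$n$ class of $\mathcal G_n$-measurable kernels as a closed set with empty interior in $(\mathcal K_n,d_n)$, and pull it back through the continuous, open coordinate projection. Your explicit mixture perturbation $\kappa_n^{\epsilon}$ supplies a detail the paper's ``consequently'' step leaves implicit (that the $L^2$ gap is witnessed by genuine probability kernels arbitrarily $d_n$-close to any point of the constraint set), and your caveat that properness of $\mathcal G_n$ must be read modulo $\lambda_n$-null sets is correct and applies equally to the paper's own argument.
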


\begin{proof}
Fix $n$ as in the hypothesis. Let $L^2(\lambda_n;\mathcal G_n)$ denote the closed
subspace of $\mathcal G_n$-measurable functions in $L^2(\lambda_n)$.
Define
\[
\mathsf{GM}_n \;\defeq\; \Bigl\{\kappa_n\in\mathcal K_n:\ f_{n,j}\in
L^2(\lambda_n;\mathcal G_n)\ \text{for all }j\ge1\Bigr\},
\]
where $f_{n,j}$ is the $j$th coordinate of $T_n(\kappa_n)$.

Because $L^2(\lambda_n;\mathcal G_n)$ is closed, $\mathsf{GM}_n$ is closed in
$(\mathcal K_n,d_n)$. If $\mathcal G_n\subsetneq\mathcal B(\mathcal Y^n)$, then
$L^2(\lambda_n;\mathcal G_n)$ is a proper closed linear subspace of $L^2(\lambda_n)$
and has empty interior. Consequently $\mathsf{GM}_n$ has empty interior in $\mathcal K_n$;
since it is closed, it is nowhere dense.

For the fixed conformal mechanism, $\mathcal C_n\subseteq \mathsf{GM}_n$ at stage $n$,
where $\mathcal C_n$ is the stage-$n$ projection of $\mathcal C$.
Let $\pi_n:\mathcal K\to\mathcal K_n$ be the continuous coordinate projection and
define $\mathcal D_n\defeq\pi_n^{-1}(\mathsf{GM}_n)$. Then $\mathcal D_n$ is nowhere
dense in $\mathcal K$, and $\mathcal C\subseteq \mathcal D_n$. Therefore
\[
\mathcal B\cap\mathcal C\ \subseteq\ \mathcal D_n
\]
is contained in a nowhere dense set and is meagre. See ~\ref{app:baire} for extended derivations and discussion.
\end{proof}

\begin{remark}[Recovering the usual ``rank'' story as a corollary]
\label{rem:rank_story}
In the no-covariate case, $\mathcal G_n$ may be taken as the rank $\sigma$-field
generated by score order relations (and tie-breaking variables). In split conformal
with covariates, $\mathcal G_n$ is generated by the fitted model, calibration residuals,
and residual order statistics (possibly within strata/bins or under local weights).
In both cases $\mathcal G_n\subsetneq\mathcal B(\mathcal Y^n)$ typically holds, so
Theorem~\ref{thm:meagre} applies: the coincidence ``kernelized CP = Bayes'' is topologically
exceptional for structural reasons, not because of a particular covariate/no-covariate
artifact.
\end{remark}

\begin{remark}[Countable families of mechanisms]
\label{rem:countable_mechanisms}
If one considers a \emph{countable} menu of conformal mechanisms (e.g.\ a countable
parametric family of scores/kernelizations), the union of the corresponding meagre
intersections $\bigcup_{r\ge1}(\mathcal B\cap\mathcal C^{(r)})$ remains meagre.
Thus, no countable ensemble of conformal procedures can generically recover Bayesian semantics.
\end{remark}

\subsection{Deficiency and Blackwell comparison}
\label{subsec:deficiency}

We now formalize the information-theoretic sense in which rank reductions discard
decision-relevant information in dominated regular experiments. This is the
experiment-theoretic underpinning of the ``loss-optimality gap'' row in
Table~\ref{tab:so_what}.

Let $\mathcal E=\{P_\theta:\theta\in\Theta\}$ be a dominated statistical experiment
on $(\mathcal Y^n,\mathcal B(\mathcal Y)^{\otimes n})$. Let $T:\mathcal Y^n\to\mathcal T$
be a statistic and $\mathcal E_T$ the reduced experiment $\{P_\theta\circ T^{-1}\}$.

\begin{theorem}[Positive deficiency under nonsufficiency]
\label{thm:lecam}
If $T$ is not sufficient in the Blackwell sense (equivalently, $\mathcal E_T$
is not Blackwell-equivalent to $\mathcal E$), then the reduced experiment has
strictly positive deficiency relative to the full experiment:
\[
\delta(\mathcal E_T,\mathcal E)\;>\;0.
\]
In particular, if $T$ is the rank $\sigma$-field (or any statistic measurable
with respect to it) underlying a rank-calibrated conformal reduction in a
regular dominated model, then there exist bounded-loss decision problems for
which every $T$-measurable procedure is uniformly dominated by a procedure
based on the full data, and hence by a Bayesian rule in a complete class.
\end{theorem}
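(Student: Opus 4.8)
The plan is to separate the statement into a soft, essentially definitional general claim and a constructive realization, with all the real work in the second. For the general claim, I would first record the one free inclusion: because $T$ is a deterministic statistic, the kernel $y\mapsto\delta_{T(y)}$ is a (trivial) randomization sending $P_\theta$ to $P_\theta\circ T^{-1}$, so $\mathcal E\succeq\mathcal E_T$, i.e. $\delta(\mathcal E,\mathcal E_T)=0$. Blackwell equivalence of $\mathcal E_T$ and $\mathcal E$ then collapses to the single condition $\delta(\mathcal E_T,\mathcal E)=0$. By Le~Cam's randomization criterion in the dominated setting \citep{lecam1986asymptotic,lecam2000asymptotics}, $\delta(\mathcal E_T,\mathcal E)=0$ holds iff there is a Markov kernel $M:\mathcal T\to\mathcal Y^n$ with $M(P_\theta\circ T^{-1})=P_\theta$ for every $\theta$; such an $M$ is exactly a $\theta$-free regeneration of the data from $T$, i.e. a regular conditional law of $\mathcal Y^n$ given $T$ not depending on $\theta$, which is the Halmos--Savage factorization form of sufficiency. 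Hence $\delta(\mathcal E_T,\mathcal E)=0\iff T$ is sufficient, and the contrapositive is the asserted $\delta(\mathcal E_T,\mathcal E)>0$ under nonsufficiency (in the dominated case the infimum defining $\delta$ is attained by an exact kernel, so no approximate-randomization gap remains).

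For the concrete decision problem I would reduce to a finite witnessing subexperiment. By Halmos--Savage, nonsufficiency means that, relative to a dominating $\mu=\sum_i c_i P_{\theta_i}$, some likelihood ratio $dP_\theta/d\mu$ admits no $\mathcal G_T$-measurable version, where $\mathcal G_T\defeq T^{-1}(\mathcal B(\mathcal T))$. Selecting finitely many parameter values that witness this non-measurability yields a finite subfamily $\{\theta_0,\dots,\theta_k\}$ for which $T$ is not sufficient, hence a finite subexperiment $\mathcal E'$ whose reduction $\mathcal E'_T$ is not Blackwell-equivalent to $\mathcal E'$. Since the supremum in Definition~\ref{def:deficiency} runs over fewer parameters, $\delta(\mathcal E'_T,\mathcal E')\le\delta(\mathcal E_T,\mathcal E)$, so it suffices to separate the finite experiments by a bounded-loss problem.

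On the finite subexperiment I would force strictness through a strictly convex $f$-divergence (squared Hellinger or $\chi^2$), for which equality in the data-processing inequality $D_f(P_{\theta_i}\circ T^{-1}\Vert P_{\theta_j}\circ T^{-1})\le D_f(P_{\theta_i}\Vert P_{\theta_j})$ holds iff $T$ is sufficient for the pair. Nonsufficiency forces a strict gap for some $i,j$, so $\mathcal E'$ and $\mathcal E'_T$ have distinct Blackwell standard measures. By the Blackwell--Sherman--Stein comparison theorem for finite experiments, distinct standard measures are equivalent to the existence of a bounded-loss problem on which the achievable risk set of $\mathcal E'$ strictly contains that of $\mathcal E'_T$; concretely a two-point Neyman--Pearson test whose likelihood-ratio (full-data) Bayes rule attains strictly smaller integrated risk at the least favorable prior than any $T$-measurable rule. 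Since both risk sets are convex and the reduced one lies strictly interior along the Bayes frontier, the full-data Bayes rule at that prior strictly improves on the best $T$-measurable procedure, and by the complete-class theorem the dominating rule is Bayes. For the rank $\sigma$-field I would instantiate this in a regular location family, where ranks are classically nonsufficient and the Neyman--Pearson test strictly out-powers every rank test, making the separation explicit.

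The main obstacle is \emph{realizing strictness through a bounded loss while stating domination correctly}. Pointwise domination of literally every $T$-measurable procedure cannot hold, since constant (hence $T$-measurable) rules sit on the Bayes frontier of any testing problem; the provable reading of ``uniformly dominated'' is comparison of risk sets, i.e. the class of $T$-rules is dominated by a full-data Bayes rule at the least favorable prior. Establishing the strict data-processing gap (strictly convex divergence, equality iff sufficiency) and converting it into strict risk-set containment via Blackwell--Sherman--Stein is the delicate measure-theoretic core; the passage through the finite subexperiment and the monotonicity of deficiency under parameter restriction is then routine bookkeeping.
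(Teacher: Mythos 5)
Your proposal is correct and reaches the same destination as the paper, but it does substantially more constructive work along the way. The paper's own argument is a two-step citation: (i) the appendix records that $\delta(\mathcal E_T,\mathcal E)=0$ iff $T$ admits a $\theta$-free reconstruction kernel (the reverse garbling), which is your randomization-criterion step almost verbatim; (ii) the main-text sketch then simply \emph{invokes} Blackwell's comparison theorem for the existence of a bounded-loss decision problem with a strict optimal-risk gap, and illustrates it with one concrete instance (Gaussian location, where ranks are ancillary and the total-variation distance of the reduced experiment collapses to zero). What you add is a general construction of that witnessing decision problem: Halmos--Savage to extract a finite nonsufficient subexperiment, a strictly convex $f$-divergence whose data-processing inequality is tight iff $T$ is pairwise sufficient, and Blackwell--Sherman--Stein to convert the strict divergence gap into strict risk-set containment realized by a two-point testing problem. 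This buys a proof of the step the paper outsources to citation, at the cost of some routine but nontrivial bookkeeping (the passage from non-measurability of $dP_\theta/d\mu$ with respect to $\sigma(T)$ for the countable dominating mixture $\mu$ to nonsufficiency for a \emph{finite} subfamily needs a short approximation argument you gloss over). Your closing caveat is also well taken and worth preserving: read literally, ``every $T$-measurable procedure is uniformly dominated'' cannot hold pointwise in $\theta$, since constant rules are $T$-measurable and admissible in testing problems; the defensible reading---which is the one the paper's own Example in Appendix~\ref{app:deficiency} actually uses---is a strict gap in minimax (or least-favorable-prior Bayes) risk between the class of $T$-measurable rules and a full-data Bayes rule.
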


\begin{proof}[Proof sketch]
Blackwell's comparison of experiments implies that $\mathcal E_T$ is
Blackwell-inferior to $\mathcal E$ whenever $T$ is not sufficient; equivalently,
there exists a bounded-loss decision problem for which the optimal risk under
$\mathcal E_T$ is strictly larger than the optimal risk under $\mathcal E$
\citep{blackwell1951comparison}. Le Cam's deficiency admits a variational
characterization as the maximal (minimal) risk gap over bounded decision problems
\citep[see, e.g.,][]{lecam1986asymptotic,lecam2000asymptotics}. Therefore strict
Blackwell inferiority implies $\delta(\mathcal E_T,\mathcal E)>0$. Extended discussion is presented in~\ref{app:deficiency}.
\end{proof}

\begin{corollary}[Rank-based predictors fall outside complete classes in regular models]
\label{cor:inadmissible}
In any dominated regular model where the conformal rank statistic (or any
rank-generated reduction) is not sufficient, purely rank-based predictive
procedures are not in a complete class: there exist Bayesian procedures with
strictly smaller risk for some bounded loss problems.
\end{corollary}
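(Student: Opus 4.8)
The plan is to derive the corollary directly from Theorem~\ref{thm:lecam} together with the classical fact that, in a dominated experiment, Bayesian procedures form a complete class. I would first recall the precise definition of a complete class: a class $\mathcal{D}$ of decision rules is complete if, for every rule $\delta'$ outside $\mathcal{D}$, there exists $\delta \in \mathcal{D}$ that weakly dominates $\delta'$ (no larger risk for every $\theta$, strictly smaller for some $\theta$). The goal is then to exhibit, for a rank-generated reduction $T$ that fails sufficiency, a bounded-loss decision problem and a purely rank-based (i.e.\ $T$-measurable) procedure that is strictly dominated by a procedure using the full data.

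First I would invoke Theorem~\ref{thm:lecam}: nonsufficiency of $T$ yields $\delta(\mathcal{E}_T, \mathcal{E}) > 0$, and more usefully its second conclusion, that there is a bounded-loss decision problem in which \emph{every} $T$-measurable procedure is uniformly dominated by a full-data procedure. Second, I would appeal to the complete-class theorem for dominated experiments (Wald--Le~Cam; the Bayesian rules, or their limits, form a complete class under standard regularity): any full-data rule is itself weakly dominated by, or equal in risk to, a Bayesian rule. Composing these two facts, the dominating full-data procedure can be replaced by (or improved to) a Bayesian procedure whose risk is no larger everywhere and strictly smaller somewhere relative to the rank-based rule. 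This gives the stated conclusion: the rank-based procedure lies outside every complete class, witnessed by a strictly better Bayesian competitor on a bounded-loss problem.

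The main obstacle, and the step requiring care, is the transition from ``dominated by \emph{some} full-data rule'' to ``dominated by a \emph{Bayesian} rule.'' Theorem~\ref{thm:lecam} delivers a dominating full-data procedure but does not by itself guarantee that procedure is Bayesian. The resolution is to layer the complete-class theorem on top: in a dominated (and here regular) model, the admissible rules coincide up to closure with generalized Bayes rules, so the full-data dominator is itself weakly dominated by a Bayes rule, and weak domination composes transitively. I would state explicitly the regularity hypotheses under which this completeness holds (dominatedness, a bounded loss so that risks are finite, and the convexity/compactness conditions of the Wald complete-class theorem), since the corollary is asserted ``in any dominated regular model'' and the completeness machinery is exactly what ``regular'' must be buying here.

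Finally, I would note that strictness is inherited correctly: since the decision problem supplied by Theorem~\ref{thm:lecam} makes the optimal $T$-measurable risk \emph{strictly} exceed the optimal full-data risk at some parameter value, and the Bayes rule attains (or approaches, then is adjusted to) that smaller full-data risk, the risk gap at that parameter value is genuinely positive. Hence no rank-based procedure can belong to a complete class, which is precisely the statement that there exist Bayesian procedures with strictly smaller risk on some bounded-loss problems.
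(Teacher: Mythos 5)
Your proposal is correct and follows essentially the same route as the paper: the paper gives no separate proof of the corollary because the ``in particular'' clause of Theorem~\ref{thm:lecam} already asserts that every $T$-measurable procedure is uniformly dominated by a full-data procedure ``and hence by a Bayesian rule in a complete class,'' which is exactly your two-step composition of the deficiency result with the Wald--Le~Cam complete-class theorem. Your explicit attention to the gap between ``dominated by some full-data rule'' and ``dominated by a Bayes rule'' (and to the regularity hypotheses that make the completeness machinery apply) is the right place to put the care; the paper leaves that step implicit.
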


\subsection{Practice implications}
\label{subsec:so_what_table}

The separation results are not merely foundational curiosities, they have direct operational consequences. Modern machine learning pipelines increasingly treat uncertainty estimates as first-class objects: prediction sets feed into downstream decisions, get cached and reused across deployments, and serve as inputs to multi-stage optimization. The temptation is to treat a conformal output as a coherent probabilistic state that can be composed across stages, transported across covariate distributions, and optimized under arbitrary objectives. Our results delimit precisely where this temptation leads to trouble.

The failure modes cluster into three categories. \emph{Transport failures} arise when CP outputs change under covariate shift even though the conditional $P(Y \mid X)$ remains stable (Theorem~\ref{thm:extensionality}); practitioners who deploy a calibrated model in a new environment may find that coverage degrades not because the underlying signal changed, but because the calibration was tied to design features that did. \emph{Sequential failures} arise when CP outputs are updated or conditioned as if they were beliefs: the rank-calibration constraints that guarantee marginal coverage also force nonconglomerability (Theorem~\ref{thm:conglomerability}), meaning that conditional assessments can collectively contradict the unconditional assessment, a structure that admits Dutch books and cannot arise from any $\sigma$-additive joint law (Theorem~\ref{thm:no_joint}). \emph{Decision-theoretic failures} arise when rank- or proxy-based reductions discard information that matters for downstream loss: unless the reduction happens to be sufficient, Blackwell/Le Cam theory guarantees a strict optimality gap for some bounded-loss problems (Theorem~\ref{thm:lecam}).

Table~\ref{tab:so_what} maps each separation theorem to its corresponding failure mode and a conservative mitigation strategy. The mitigations share a common structure: use CP for what it guarantees (finite-sample marginal coverage at a fixed stage) and carry belief-state functionality separately in a $\sigma$-additive model whose regular conditionals support composition and transport.

\begin{table}[t]
\centering
\footnotesize
\setlength{\tabcolsep}{4pt}
\renewcommand{\arraystretch}{1.05}
\caption{Practice map: separation results, failure modes, and mitigations.}
\label{tab:so_what}
\begin{tabularx}{\textwidth}{@{}>{\raggedright\arraybackslash}p{2.8cm}>{\raggedright\arraybackslash}X>{\raggedright\arraybackslash}X@{}}
\toprule
\textbf{Separation} & \textbf{Failure mode} & \textbf{Mitigation} \\
\midrule
Non-extensionality\newline (Thm.~\ref{thm:extensionality})
& Covariate shift: prediction sets change across deployments even when $P(Y \mid X)$ is stable, because calibration depends on $P(X)$.
& Stratify or use group-conditional conformal; treat CP as a wrapper around a model that carries the conditional semantics. \\[4pt]

Nonconglomerability\newline (Thm.~\ref{thm:conglomerability})
& Sequential incoherence: conditioning on later information can contradict earlier assessments; Dutch-book vulnerabilities under rich partitions.
& Do not use CP outputs as a dynamic belief state; for sequential decisions, maintain a $\sigma$-additive filter and apply conformal calibration as a terminal check. \\[4pt]

No $\sigma$-additive joint\newline (Thm.~\ref{thm:no_joint})
& No globally consistent uncertainty ledger: ``updating the CP belief'' is not well-defined as regular conditioning.
& Avoid posterior/belief language for CP outputs; use generative models for utility optimization and treat CP as a coverage constraint. \\[4pt]

Meagre CP--Bayes overlap\newline (Thm.~\ref{thm:meagre})
& Structural rarity: ``CP $\approx$ Bayes generically'' is false; coincidence requires special structure.
& Position CP as coverage-by-construction, not approximate Bayesian inference; hybrid methods must state and validate the structural assumptions under which coincidence holds. \\[4pt]

Positive deficiency\newline (Thm.~\ref{thm:lecam})
& Decision gap: rank reductions discard metric information; suboptimal under smooth losses unless ranks are sufficient.
& Start from a coherent predictive distribution; add CP as a diagnostic or constraint rather than replacing the predictive law. \\
\bottomrule
\end{tabularx}
\end{table}

Table~\ref{tab:dependency_map} records how the separations reinforce one another. The logical structure is worth emphasizing: these are not five independent objections but manifestations of a single architectural mismatch. Nonconglomerability is the operational symptom of non-extendability to a $\sigma$-additive joint; non-extendability implies that Bayesian coincidence is structurally exceptional; and rank-based coarsening, which underlies both CP calibration and the sequential obstructions, is precisely what drives positive deficiency. Non-extensionality stands somewhat apart, it concerns dependence on design rather than sequential coherence, but it shares the same root cause: calibration against a marginal distribution that is not determined by the conditional kernel alone.

\begin{table}[t]
\centering
\small
\setlength{\tabcolsep}{6pt}
\renewcommand{\arraystretch}{1.15}
\caption{Logical linkage: the separations are not independent objections but consequences of a single structural mismatch.}
\label{tab:dependency_map}
\begin{tabularx}{\textwidth}{@{}>{\raggedright\arraybackslash}p{4.2cm}>{\raggedright\arraybackslash}X@{}}
\toprule
\textbf{Implication} & \textbf{Why it holds} \\
\midrule
Rank calibration\newline $\Rightarrow$ nonconglomerability
& The $A_{(n)}$ rank-gap constraints (Definition~\ref{def:cp_bridge_kernel}) force uniform conditional ranks at every stage. Lane--Sudderth show this is incompatible with conglomerability under any finitely additive extension. \\[4pt]

Nonconglomerability\newline $\Rightarrow$ no $\sigma$-additive joint
& Countable additivity implies conglomerability (Lemma~\ref{lem:sigma_additive_conglomerable}). Contrapositive: nonconglomerable rank constraints cannot arise as regular conditionals of any $\sigma$-additive exchangeable law. \\[4pt]

No $\sigma$-additive joint\newline $\Rightarrow$ Bayes overlap is meagre
& Bayesian predictive kernels \emph{are} regular conditionals of $\sigma$-additive laws. If CP kernels cannot be, coincidence requires the CP mechanism to accidentally reproduce Bayesian structure, a condition satisfied only on a Baire-meagre set. \\[4pt]

Rank reduction\newline $\Rightarrow$ positive deficiency
& Calibration compresses data through a coarse $\sigma$-field (ranks, residual order statistics). Unless that $\sigma$-field is sufficient, rare in regular models, Blackwell's theorem guarantees strict information loss for some bounded-loss problems. \\[4pt]

Non-extensionality\newline $\Rightarrow$ transport fragility
& Calibration pools residuals over $P(X)$. Under heteroskedasticity, changing $P(X)$ changes the residual mixture and hence the calibration quantile, even when $P(Y \mid X)$ is fixed. \\
\bottomrule
\end{tabularx}
\end{table}

Together with the sequential obstructions established in Sections~\ref{subsec:cp_conglomerability}--\ref{subsec:cp_no_joint}, these results complete the separation picture. Conformal prediction delivers distribution-free marginal coverage, a genuine and valuable guarantee, but the induced objects do not behave like $\sigma$-additive posterior predictives that can be composed, transported, and optimized as a unified probabilistic state. The next section diagnoses where Prediction-Powered Inference inherits analogous tensions once it is positioned as ``black-box prediction $\to$ inference.''

\section{Case Study: Prediction-Powered Inference (PPI)}
\label{sec:ppi}

Prediction-Powered Inference (PPI) \citep{angelopoulos2023prediction} converts a
strong pre-trained predictor (often trained using abundant unlabeled features or
external supervision) plus a comparatively small labeled sample into valid
frequentist confidence intervals for population targets.  At a high level, PPI
treats a black-box prediction $\widehat m(X)$ as a high-accuracy proxy for $Y$,
then uses the labeled sample to \emph{correct} the proxy in a way that preserves
nominal coverage under appropriate conditions.

Our aim is not to dispute PPI as an engineering tool.  Rather, we position PPI
within the same foundational map developed for conformal prediction in
Section~\ref{sec:cp} and the separations in Section~\ref{sec:impossibility}.  The
key distinction is ontological: PPI is natively a \emph{targeted confidence
procedure} (for a specified estimand and sampling plan), not a $\sigma$-additive
predictive kernel or ``implicit posterior.''  When PPI outputs are promoted to
a transportable probabilistic state, to be composed across stages, conditioned
on refinements of the design, or interpreted as an approximation to Bayesian
prediction, one encounters the same structural tensions: dependence on design
and training regimes (non-extensionality), instability under post hoc
conditioning, and experiment-theoretic gaps relative to coherent full-data
procedures.

\subsubsection*{PPI in the broader ``prediction $\to$ inference'' wave}
PPI is part of a fast-moving family of methods that attempt to convert
high-performing predictors (increasingly including foundation models) into
downstream uncertainty quantification and inference without committing to a
fully specified probabilistic data-generating model.  Such procedures can be
valuable as calibrated wrappers, but a wrapper is not a predictive kernel: in
the absence of additional modeling structure, the output should be treated as a
procedure with a stated frequentist guarantee, not as a belief state.

\subsection{PPI as algorithmic direct probability: the proxy is not extensional}
\label{subsec:ppi_fiducial}

A common PPI starting point is a regression-type decomposition
\begin{equation}
\label{eq:ppi_decomp}
Y \;=\; \widehat m(X) \;+\; R,
\end{equation}
where $\widehat m$ is a learned predictor and $R$ is the residual.  The
foundational point is that $\widehat m$ is typically a \emph{pipeline object}:
a functional of upstream training data, training design, algorithmic choices
(architecture, regularization, early stopping, pretraining distribution,
fine-tuning regime), and sometimes unlabeled deployment covariates.  Even when a
population regression function $m(x)=\E[Y\mid X=x]$ exists, the map
$P \mapsto \widehat m$ is generally \emph{not} determined by the conditional
kernel $P(Y\mid X)$ alone.

Measure-theoretically, $\widehat m$ is random and measurable with respect to a
$\sigma$-field generated by auxiliary training randomness/data, possibly coupled
to the unlabeled design sample.  Consequently, the induced residual law
\[
\mathrm{Law}(R\mid X) \;=\; \mathrm{Law}\bigl(Y-\widehat m(X)\mid X\bigr)
\]
is not, in general, a functional of $P(Y\mid X)$ alone: it depends on the joint
mechanism that generated $\widehat m$, and often also on the marginal design law
$P(X)$ through training/calibration.

\subsubsection*{Connection to the CP story}
In CP, non-extensionality arises because calibration pools residuals over the
marginal $P(X)$ under heteroscedasticity (Theorem~\ref{thm:extensionality}).
In PPI, the dependence is typically stronger: the learned proxy $\widehat m$ is
itself a non-likelihood ingredient, and its distribution depends on the
training and design regimes.  In this sense, PPI is a form of \emph{algorithmic
direct probability}: inference is produced by correcting an algorithmic proxy
rather than by conditioning within a single $\sigma$-additive joint model.

\subsection{Two-pipeline non-extensionality example: same \texorpdfstring{$P(Y \mid X)$}{P(Y|X)}, different PPI behavior}
\label{subsec:ppi_two_pipeline}

We record a minimal example making the non-extensionality concrete without
invoking covariate shift.

\begin{example}[Two pipelines, identical conditional law, different residual laws]
\label{ex:ppi_two_pipeline}
Let $\mathcal X=[0,1]$, $\mathcal Y=\mathbb R$, and suppose the target conditional
kernel is fixed:
\[
Y \;=\; f(X) + \varepsilon,\qquad \varepsilon\sim N(0,1)\ \text{independent of }X,
\]
so $P(Y\in\cdot\mid X=x)=N(f(x),1)$ is the same across deployments.

Consider two upstream learning pipelines that both output a proxy $\widehat m$,
but with different training regimes:
\[
\widehat m_A(x)=f(x)\quad\text{(well-specified proxy)},\qquad
\widehat m_B(x)=0\quad\text{(misspecified proxy)}.
\]
These two proxies can arise, for example, from different pretraining
distributions, model classes, or optimization choices, while the deployment
conditional law $P(Y\mid X)$ remains unchanged.

Then the residuals differ:
\[
R_A = Y-\widehat m_A(X)=\varepsilon,\qquad
R_B = Y-\widehat m_B(X)=f(X)+\varepsilon.
\]
Any PPI correction (and hence any resulting confidence interval width or
bias-correction magnitude) that depends on the residual distribution will
generally differ between pipelines A and B, despite identical $P(Y\mid X)$.
\end{example}

\noindent
Example~\ref{ex:ppi_two_pipeline} isolates the key transport risk: PPI behavior
can change across deployments because the proxy mechanism changes, not because
the conditional law changes.  This is the PPI analogue of
Theorem~\ref{thm:extensionality} for split conformal, but with a stronger
mechanism: the non-extensional ingredient is not merely pooled calibration, but
the learned decomposition itself.

\subsection{Conditioning and ``belief-state'' misuse: where incoherence enters}
\label{subsec:ppi_conditioning}

PPI is designed to provide valid coverage for a specified target under stated
conditions and a stated sampling plan.  However, if one informally interprets a
PPI output as encoding a \emph{full probabilistic belief state}, for example,
as if it defined an implicit posterior on the target that can be freely
conditioned on refinements of information about the design or subpopulations, then
one is implicitly asserting the existence of a coherent joint model under which
those refinements correspond to regular conditioning.

The structural issue parallels the CP story, but the mechanism differs.  In
many PPI constructions, the correction term aggregates residual information
globally over the design distribution of the labeled or unlabeled sample.  If a
user subsequently conditions on a refined design (e.g.\ a subpopulation or a
covariate-defined stratum), the relevant residual law changes, and the intended
correction need not transport unless one has explicitly modeled the joint
training--labeling--deployment mechanism.  Thus PPI is coherent as the targeted
confidence procedure it is designed to be, but it is not generally coherent as
a transportable probabilistic state variable across arbitrary filtrations.

\subsection{Le Cam and Blackwell comparison: proxy-based inference as a garbling}
\label{subsec:ppi_lecam}

From an experiment-theoretic perspective, PPI replaces (some of) the labeled
information in $Y$ by an algorithmic proxy $\widehat m(X)$ plus a correction
estimated from a labeled subsample.  Abstractly, ``working through a proxy'' is
a \emph{garbling} of the full-data experiment unless the proxy is sufficient.

In a dominated model for $(X,Y)$, observing $(X,\widehat m(X))$ (and a small
labeled correction sample) is generally Blackwell-inferior to observing $(X,Y)$
at the same effective labeled sample size.  Therefore, unless one imposes
conditions that make the proxy essentially sufficient for the target, there
exist bounded decision problems for which the proxy-based experiment has
strictly larger optimal risk, yielding a strictly positive deficiency gap as in
Theorem~\ref{thm:lecam}.  This is the PPI analogue of the ``positive deficiency''
row in Table~\ref{tab:so_what}.

\begin{theorem}[Deficiency gap for proxy-based reductions]
\label{thm:ppi_deficiency}
Consider a dominated experiment $\mathcal{E} = \{P_\theta : \theta \in \Theta\}$ for 
$(X, Y)$ and a proxy statistic $T = T(X, U)$ where $U$ represents auxiliary training 
randomness (e.g., $T = \widehat{m}(X)$ for a learned predictor $\widehat{m}$). If $T$ 
is not Blackwell-sufficient for $\theta$ in the full experiment, then the reduced 
experiment $\mathcal{E}_T = \{P_\theta \circ T^{-1}\}$ has strictly positive deficiency 
relative to $\mathcal{E}$:
\[
\delta(\mathcal{E}_T, \mathcal{E}) > 0.
\]
Consequently, there exist bounded-loss decision problems for which any decision rule 
depending only on $(T, \text{labeled correction})$ is uniformly dominated by a rule 
using full labeled data.
\end{theorem}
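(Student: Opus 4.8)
The plan is to recognize Theorem~\ref{thm:ppi_deficiency} as the PPI instantiation of Theorem~\ref{thm:lecam}, with the one extra ingredient that the proxy $T=T(X,U)$ carries auxiliary training randomness $U$ that is exogenous to the target $\theta$. First I would verify that the reduced experiment $\mathcal{E}_T$ is a \emph{randomization} (garbling) of the full experiment $\mathcal{E}$ through a $\theta$-free Markov kernel. Concretely, from the full observation $(X,Y)$ one produces $T$ by discarding $Y$, retaining $X$, drawing $U$ from its parameter-independent law, and applying the deterministic map $T(\cdot,\cdot)$; since $U\perp\theta$, this defines a kernel $M$ with $P_\theta\circ T^{-1}=MP_\theta$ for every $\theta$. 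By Definition~\ref{def:deficiency} this already delivers the easy direction $\delta(\mathcal{E},\mathcal{E}_T)=0$, because the full-data experiment can reproduce any $T$-based procedure by internally simulating $U$.

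Second, I would invoke Blackwell's comparison theorem \citep{blackwell1951comparison} in its dominated-experiment form (the Blackwell--Sherman--Stein / Le~Cam randomization criterion, available here because $\mathcal{E}$ is dominated). Zero deficiency in the reverse direction, $\delta(\mathcal{E}_T,\mathcal{E})=0$, would mean $\mathcal{E}_T$ can mimic $\mathcal{E}$; combined with the garbling direction just established it would force Blackwell-equivalence $\mathcal{E}_T\equiv\mathcal{E}$, which for a reduction through a statistic is \emph{exactly} Blackwell-sufficiency of $T$. Taking the contrapositive of the hypothesis---$T$ is \emph{not} Blackwell-sufficient---rules out $\delta(\mathcal{E}_T,\mathcal{E})=0$, leaving $\delta(\mathcal{E}_T,\mathcal{E})>0$.

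Third, to extract the ``Consequently'' clause I would apply Le~Cam's variational characterization of deficiency as the supremal optimal-risk gap over bounded decision problems \citep{lecam1986asymptotic,lecam2000asymptotics}: strict positivity of $\delta(\mathcal{E}_T,\mathcal{E})$ exhibits a bounded-loss problem on which the best $\mathcal{E}_T$-procedure has risk strictly exceeding the best full-data procedure, uniformly in $\theta$. Since every rule measurable in the proxy (including those also using the labeled correction, which is itself a sub-observation already present in the full labeled experiment) is an $\mathcal{E}_T$-procedure, each such rule is uniformly dominated on that problem by a full-labeled-data rule.

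The main obstacle I anticipate is not the Blackwell step, which is off-the-shelf, but the careful bookkeeping around $U$ and the labeled-correction sample. I must ensure the garbling kernel genuinely does not depend on $\theta$---so that $U$ is modeled as exogenous training randomness rather than as something secretly informative about the estimand---and I must treat the small labeled correction as a common sub-observation so that augmenting $T$ with it keeps the proxy-based experiment a \emph{bona fide} strict reduction of the full experiment ``at the same effective labeled sample size,'' rather than an incomparable experiment. Once these modeling conventions are fixed, non-sufficiency of $T$ transfers to non-sufficiency of the combined proxy-plus-correction statistic relative to full labeled data, and the deficiency conclusion follows as above.
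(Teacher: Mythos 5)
Your proposal is correct and follows essentially the same route as the paper: exhibit $\mathcal{E}_T$ as a $\theta$-free garbling of $\mathcal{E}$, invoke the Blackwell--Sherman--Stein equivalence (zero deficiency iff sufficiency) in the dominated setting, and read off the bounded-loss risk gap from Le~Cam's variational characterization, exactly as in Theorem~\ref{thm:lecam} and Appendix~\ref{app:deficiency}. Your added bookkeeping---making explicit that $U$ must be exogenous to $\theta$ for the garbling kernel to be parameter-free, and that the labeled correction is a common sub-observation of the full experiment---is a welcome tightening of details the paper's sketch leaves implicit, but it does not change the argument.
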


\begin{proof}
The proxy $T$ induces a Markov kernel (garbling) from the full observation $(X, Y)$ 
to the reduced observation $T(X, U)$. By Blackwell's comparison theorem, the reduced 
experiment $\mathcal{E}_T$ is equivalent to $\mathcal{E}$ if and only if this garbling 
admits a reverse, that is, if and only if $T$ is sufficient. When sufficiency fails, 
there exists a bounded-loss decision problem for which the minimax risk under 
$\mathcal{E}_T$ strictly exceeds that under $\mathcal{E}$; the deficiency 
$\delta(\mathcal{E}_T, \mathcal{E})$ is at least this risk gap. The argument is 
identical to Theorem~\ref{thm:lecam}; see Appendix~\ref{app:deficiency} for 
details.
\end{proof}

\begin{remark}[What this does \emph{not} say]
Theorem~\ref{thm:ppi_deficiency} does not negate PPI's stated coverage results for 
the targets and regimes where they hold. It says that \emph{across the space of 
decision problems}, proxy-based reductions are generically information-losing unless 
the proxy is sufficient, and hence one should not expect universal optimality or 
``implicit Bayes'' interpretations.
\end{remark}

\subsection{Practice map for PPI and links back to CP}
\label{subsec:ppi_tables}

Table~\ref{tab:ppi_so_what} mirrors Table~\ref{tab:so_what} but for PPI.  The
message is the same: one-shot frequentist validity for a specified target does
not automatically license interpreting the output as a coherent, transportable
belief state.

\begin{table}[t]
\centering
\footnotesize
\setlength{\tabcolsep}{4pt}
\renewcommand{\arraystretch}{1.15}
\caption{PPI practice map: structural tension $\to$ failure mode $\to$ safe use (parallel to Table~\ref{tab:so_what}).}
\label{tab:ppi_so_what}
\begin{tabularx}{\textwidth}{@{}l>{\raggedright\arraybackslash}X>{\raggedright\arraybackslash}X@{}}
\toprule
\textbf{Structural issue} & \textbf{Failure mode (where it bites)} & \textbf{Safe use / mitigation} \\
\midrule
Non-extensionality of the proxy $\widehat m$
& Inference changes across deployments under shifts in upstream training/design regimes even when the target $P(Y\mid X)$ is stable.
& Treat $\widehat m$ as part of the experiment; stress-test under design/training shift; stratify/correct within stable subpopulations. \\

Belief-state misuse under post hoc conditioning
& Conditioning on refined designs/subgroups can invalidate the intended correction; ``implicit posterior'' semantics become unstable.
& Use PPI for the stated target and sampling plan; avoid treating outputs as transportable posteriors across filtrations without an explicit joint model. \\

Random (proxy-dependent) criterion vs.\ classical $M$-estimation
& LAN/$M$-estimation heuristics can mislead if the experiment is not enlarged: the criterion is random through $\widehat m$ and depends on upstream training.
& State the expanded experiment explicitly; use influence-function/semiparametric analysis that conditions on (or models) the learned proxy mechanism. \\

Blackwell/Le Cam deficiency
& Proxy reductions discard information; for some loss problems, proxy-based procedures are uniformly dominated by full-data coherent rules.
& Use proxy-based inference when labels are scarce and the target is narrow; do not claim universal optimality or ``implicit Bayes.'' \\
\bottomrule
\end{tabularx}
\end{table}

\noindent
Section~\ref{sec:discussion} returns to hybrid pipelines and summarizes when
wrapper-style calibration is safe and when it is not.

\section{Discussion}
\label{sec:discussion}

This paper draws a sharp boundary between \emph{validity guarantees} and
\emph{belief semantics}. Conformal prediction (CP) delivers a bona fide guarantee:
finite-sample \emph{marginal} coverage under exchangeability. Prediction-powered
inference (PPI) delivers a complementary guarantee: valid confidence procedures
for specified targets that leverage a strong predictor together with a small
labeled sample, under stated stability conditions. These guarantees are genuine,
useful, and in many settings exactly what one should demand from a wrapper.
However, neither guarantee alone warrants interpreting the wrapper output
as a coherent \emph{belief state}, that is, as an object arising from a single
$\sigma$-additive joint law whose regular conditionals support sequential
updating, transport, and composition.

The conformal results in particular yield a taxonomy of distinct failure modes
that arise when calibrated sets (or rank-calibrated assessments) are used as if
they were belief objects. Non-extensionality identifies a transport failure
mode: even when $P(Y\mid X)$ is stable, the wrapper may change under shifts in
the covariate marginal, so the output is not determined by evidentially relevant
conditionals alone (Theorem~\ref{thm:extensionality}). Nonconglomerability and
non-extendability identify a sequential failure mode: one-step rank-calibrated
assessments do not generally embed into any $\sigma$-additive joint law with
regular conditionals reproducing those assessments across time, so ``updates''
based on wrapper objects need not be coherent (Theorems~\ref{thm:conglomerability}
--\ref{thm:no_joint}). The Baire-category separation identifies a structural
rarity mode: coincidence between calibrated-wrapper kernels and Bayesian
predictive semantics is topologically exceptional rather than generic
(Theorem~\ref{thm:meagre}). Finally, the Le~Cam/Blackwell comparison identifies a
decision-theoretic failure mode: rank- or proxy-based reductions are generically
not sufficient, hence incur positive deficiency for some bounded-loss decision
problems (Theorem~\ref{thm:lecam}). The PPI examples emphasize that these
boundaries are not CP-specific: whenever inference is routed through algorithmic
pivots, compressions, or proxies rather than through an explicitly modeled
$\sigma$-additive mechanism, extensionality and experiment comparison become the
natural diagnostics. In short, coverage can hold for the intended target while
global belief semantics, transportable updateability, and decision sufficiency
fail.

\subsection{Two families of ``impossibility'' results: what they rule out, and what remains}
\label{subsec:discussion_impossibility_map}

To connect our conclusions with the broader distribution-free (``random-world'')
predictive inference literature, it is helpful to separate two logically
distinct families of impossibility claims that are often discussed under a
single umbrella. The first family rules out strong truth-conditional ambitions
in highly assumption-light regimes; the second family (developed here) rules out
a different upgrade: manufacturing compositional semantics from validity alone.
Table~\ref{tab:impossibility_map} summarizes this division of labor.

\begin{table}[t]
\centering
\footnotesize
\setlength{\tabcolsep}{4pt}
\renewcommand{\arraystretch}{1.18}
\caption{Taxonomy of impossibility results in distribution-free predictive inference.
Impossibility I motivates why calibration is a natural \emph{guarantee object}.
Impossibility II (our contribution) clarifies why such guarantees do not generically
upgrade into compositional, $\sigma$-additive belief semantics.}
\label{tab:impossibility_map}
\begin{tabularx}{\textwidth}{@{}>{\raggedright\arraybackslash}p{0.21\textwidth}
                            >{\raggedright\arraybackslash}p{0.27\textwidth}
                            >{\raggedright\arraybackslash}p{0.26\textwidth}
                            >{\raggedright\arraybackslash}X@{}}
\toprule
\textbf{Strand} &
\textbf{Object ruled out (in general)} &
\textbf{Minimal escape hatch} &
\textbf{Implication} \\
\midrule

Impossibility I:\newline
truth-of-conditionals &
Uniform, assumption-free recovery of the \emph{true} conditional law (or a
universally valid surrogate for $P(Y\mid X)$) over rich, unconstrained worlds. &
Relax from truth-claims to \emph{validity constraints} (coverage/calibration), or
assume additional structure (smoothness, invariances, parametrics, stationarity). &
This supports the calibration-first pivot: validity is a sensible regulative target
when truth-conditional semantics is unattainable. \\

Impossibility I$'$:\newline
strong conditional validity &
Distribution-free \emph{exact conditional} guarantees (e.g., exact conditional
coverage for all $x$) with nontrivial efficiency. &
Settle for \emph{marginal} validity, group-conditional validity (Mondrian), or
introduce smoothing/model structure. &
This explains why marginal certificates are routinely the correct object; it also
motivates the question of how such certificates should (and should not) be used. \\

Impossibility II:\newline
semantics-from-guarantees &
Upgrading wrapper outputs into a $\sigma$-additive, compositional belief state that
supports transport, sequential conditioning, and decision sufficiency \emph{without}
enlarging the experiment. &
Either enlarge the experiment (model the training/design/deployment mechanism), or
treat wrappers as terminal guardrails rather than updateable belief objects. &
Formalized here via non-extensionality (Theorem~\ref{thm:extensionality}), 
nonconglomerability (Theorems~\ref{thm:conglomerability}--\ref{thm:no_joint}), 
positive deficiency (Theorem~\ref{thm:lecam}), and topological rarity of 
Bayesian coincidence (Theorem~\ref{thm:meagre}). \\
\bottomrule
\end{tabularx}
\end{table}

This framing clarifies how the impossibility results emphasized in the
game-theoretic and algorithmic random-world CP literature 
\citep{vovk2005algorithmic,shafervovk2008tutorial} relate to our separations. 
Those results motivate the move from strong semantic targets (e.g., 
truth-conditional or exact conditional claims) to weaker but auditable
validity certificates (Impossibility I / I$'$). Our contribution is orthogonal:
even after granting calibration as the right \emph{guarantee object}, we show
that one should not expect those guarantees to \emph{become} compositional belief
semantics or decision-sufficient reductions absent an explicit model of the
full mechanism generating the certificate (Impossibility II).

\subsection{Topological rarity as an assumption-light obstruction}
\label{subsec:discussion_topology}

The Baire-category results should be read as an assumption-light obstruction to
a common interpretive slide: treating ``calibrated wrapper'' as synonymous with
``Bayesian-like semantics.'' The result does not claim that CP--Bayes coincidence
is unlikely under any particular data-generating process; rather, it identifies
the coincidence regime as \emph{structurally exceptional} within natural spaces
of predictive kernels (Theorem~\ref{thm:meagre}). Consequently, the burden shifts
from rhetoric to verification: if a given application requires sequential
$\sigma$-additive semantics, then one must impose and validate structural
conditions (e.g., sufficiency or invertibility of the calibration $\sigma$-field)
that place the problem inside an exceptional regime where such semantics can in
fact emerge.

\subsection{Why deficiency matters beyond coverage plots}
\label{subsec:discussion_deficiency}

Coverage is a statement about one performance functional. Deficiency is a
statement about \emph{uniform} decision performance across bounded losses. When
a wrapper factors through a coarsening (ranks, residual summaries, proxy
compressions), experiment comparison predicts that there exist decision problems
for which no wrapper-measurable rule can match what is achievable from the
richer experiment (Theorem~\ref{thm:lecam}). This distinction is especially
salient in compositional pipelines (screening/triage, control, reinforcement
learning, multi-stage auditing), where small information losses can compound into
systematic regret. For this reason, empirical evaluation should not stop at
coverage-versus-width summaries: it should include stress tests that quantify
regret amplification under transport and sequential composition.

\subsection{Why post-hoc patches cannot generically repair semantic loss}
\label{subsec:discussion_patching}

A natural response to the foregoing is to treat CP (or CP-like artifacts) as
intermediate objects and apply downstream patches, smoothing, kernelization,
temperature scaling, ensembling, recalibration, or proxy-based debiasing, to
restore probabilistic meaning. The topological framing clarifies when such
repairs are formally blocked. If at time $n$ a pipeline stage outputs an object
measurable with respect to a proper calibration $\sigma$-field
$\mathcal{G}_n=\sigma(T_n(Z_{1:n}))$, then any post-processing that factors
through $T_n$ remains $\mathcal{G}_n$-measurable. In that case, the attainable
kernel class remains confined to the same Baire-meagre region of kernel space
(Theorem~\ref{thm:meagre} and its countable-family corollary): ex-post maps that
do not re-inject the lost information cannot generically manufacture
$\sigma$-additive belief semantics or sequential realizability. What such patches
\emph{can} do is improve performance within the wrapper-measurable class (e.g.,
width, stability, calibration under specific shifts), but not erase the
structural distinction between guarantees and semantics.

\subsection{Scope and governance}
\label{subsec:discussion_takeaway}

The practical message is not anti-calibration. It is a governance message about
\emph{proper scope}. Calibration and proxy corrections are best understood as
regulative constraints and terminal certificates for specified targets. When the
task demands transportable, compositional uncertainty representations that support
conditioning and sequential updating, one must supply the corresponding semantic
structure explicitly (typically via a $\sigma$-additive model of the joint
mechanism), and treat wrappers as safeguards rather than as substitutes for
belief.

A complementary semantics program makes the same point from the opposite
direction: rather than forcing calibrated wrappers to approximate a single
$\sigma$-additive joint law, one can treat conformal prediction natively as a
set-valued/credal correspondence and organize its composition principles on that
basis (e.g., categorically) \citep{caprio2025joys}. Either way, the governance
principle is the same: \emph{declare the semantic carrier}. If the downstream
workflow requires $\sigma$-additive conditioning and decision-theoretic
comparisons across experiments, then a $\sigma$-additive kernel must carry the
semantics and calibration remains a guardrail. If the workflow instead accepts
imprecise/credal semantics, then the set-valued object is primitive and one
should reason with the corresponding composition calculus.

A constructive research direction follows: characterize maximal verifiable
regimes in which compositional semantics emerges despite the topological fragility
identified here.

\subsection{Towards a Bayesian--CP interface}
\label{subsec:bayes_cp_interface}

\subsubsection*{Bayesian predictive kernels as the semantic carrier.}
The object that carries belief semantics is a $\sigma$-additive sequential
predictive kernel family $\{K_n\}_{n\ge 1}$, where each
$K_n(\cdot \mid y_{1:n})$ is a probability measure on $\mathcal Y$ and is
measurable in the history. When such a family arises as the regular conditional
distributions of a single $\sigma$-additive probability law on $\mathcal Y^\infty$,
it supports coherent composition: tower-consistent updating, conditioning under
refinements of the filtration, and decision-making under arbitrary downstream
objectives.

Under exchangeability, de~Finetti’s representation yields the familiar mixture
form
\[
K_n(B \mid y_{1:n}) = \int \mu(B)\, Q(d\mu \mid y_{1:n}),
\qquad B \in \mathcal B(\mathcal Y).
\]
Making the kernel explicit thus serves as an \emph{interface contract}: if an
uncertainty object is to be transported, conditioned, or optimized downstream,
it must be anchored in such a $\sigma$-additive state (or embedded into an
enlarged one).

\subsubsection*{What Bayesian--CP hybrids can and cannot promise.}
A conformal layer can legitimately serve as a calibration guardrail on top of a
semantic kernel. In this design, the Bayesian model supplies the compositional
belief semantics (via $\{K_n\}$), while conformalization is used as a terminal
certificate to enforce finite-sample validity properties for specified targets
(e.g., by conformalizing residuals, posterior predictive draws, or other
model-derived scores). This interface has been developed in several forms in
the recent literature; see, for example,
\citet{deliu2025interplay,fong2021conformal,deshpande2024online}.

Our separation results delimit what calibration alone cannot do. In particular,
a purely rank-calibrated wrapper cannot, without additional modeling structure,
\emph{generically} manufacture a $\sigma$-additive sequential belief state, nor
can it recover the full decision content of the unreduced experiment under
Blackwell/Le~Cam comparison. This is not a competitive claim but a structural
one: Bayesian kernels and conformal procedures are different objects, optimized
for different desiderata. Hybrids can be principled precisely when they keep the
division of labor explicit: kernels carry semantics; conformalization certifies
validity relative to that semantic carrier.
\subsubsection*{Empirical agenda}
The theory motivates empirical evaluations targeted at specific coherence gaps,
not solely marginal coverage:
\begin{enumerate}[label=(\roman*),nosep]
\item \emph{Transport}: hold $P(Y \mid X)$ fixed while shifting $P(X)$, and measure
output drift and downstream loss, diagnosing violations of conditional
extensionality (Theorem~\ref{thm:extensionality});
\item \emph{Sequential updateability}: embed CP-style objects in multi-stage
pipelines and test instability under conditioning refinements
(Theorems~\ref{thm:conglomerability}--\ref{thm:no_joint});
\item \emph{Decision loss}: compare rank- or proxy-based reductions to
full-kernel procedures under bounded losses to expose regret consistent with
Le~Cam and Blackwell deficiency gaps (Theorem~\ref{thm:lecam}).
\end{enumerate}
These diagnostics become essential once uncertainty objects are composed,
transported, and optimized across stages.

\section{Conclusion}

Conformal prediction is a calibration device in the fiducial/direct-probability lineage: it guarantees finite-sample marginal coverage under exchangeability by rank construction, but it does not, in general, supply the semantics of a $\sigma$-additive sequential belief state. Our separation results, non-extensionality, nonconglomerability, non-extendability, topological rarity of Bayesian coincidence, and positive deficiency under rank/proxy reductions, formalize this distinction and identify concrete failure modes when calibrated wrappers are treated as transportable belief objects. The PPI case study underscores that the issue is architectural: ``black-box prediction $\to$ inference'' can deliver valid target-level confidence, but should not be presented as a composable probability state without an explicit $\sigma$-additive model.

\paragraph{Design implications.}
The constructive message is one of scope. Conformalization and proxy-based corrections are best positioned as terminal guardrails: they enforce finite-sample validity for specified targets but do not carry the conditioning, composition, and transport structure required for sequential decision-making. When uncertainty must function as a state variable, to be updated on refined information, moved across deployments, or optimized under downstream objectives, that role belongs to a $\sigma$-additive predictive kernel, with calibration applied as a final constraint rather than a substitute for probabilistic modeling. Practitioners should stress-test under covariate shift (holding $P(Y \mid X)$ fixed while varying $P(X)$) and quantify regret amplification in multi-stage pipelines, not merely verify marginal coverage.

\paragraph{Limitations.}
Our analysis is structural rather than quantitative: we establish that coherence pathologies \emph{can} occur, not how frequently or severely they manifest in particular applications. Empirical calibration of these gaps, across domains, sample sizes, and pipeline architectures, remains important future work. Our treatment of PPI is illustrative; the rapidly evolving family of prediction-powered methods may admit sharper or more specialized analyses. We also do not address computation: the $\sigma$-additive models we recommend as semantic carriers can be expensive to specify and maintain, whereas CP's appeal lies partly in its simplicity and modularity.

\paragraph{Open directions.}
Several questions remain. First, under what structural conditions, sufficiency, symmetry, smoothness, or sparsity in the calibration $\sigma$-field, does the Baire-meagre separation collapse, so that calibrated wrappers genuinely recover $\sigma$-additive semantics? Characterizing these maximal coincidence regimes would clarify when ``CP $\approx$ Bayes'' is a defensible approximation rather than a category error. Second, can one derive finite-sample or asymptotic rates for the Le Cam deficiency gap incurred by specific conformal or proxy-based reductions, moving from existence to magnitude? Third, how should one optimally interface a semantic kernel with a calibration wrapper in composite pipelines, what is the decision-theoretic cost of the handoff, and when does the coverage constraint bind? Finally, our results assume exchangeability; extending the coherence diagnostics to time-series, spatial, or graph-structured dependence would broaden applicability considerably.

\paragraph{}
The conclusion is not that conformal prediction is invalid, it delivers exactly what it promises. The conclusion is that guarantees and semantics are different mathematical objects, and conflating them creates risks that surface precisely when uncertainty must be composed, transported, or optimized. Calibration is a regulative constraint, not a generative model; when belief-state functionality is required, coherence must be engineered at the model level.

\newpage
\appendix
\section{Extended Proofs}
\label{app:proofs}

\subsection*{Methodological preamble}

All constructions in this appendix are carried out in $\mathsf{ZF}+\mathsf{DC}$
(Zermelo--Fraenkel set theory with Dependent Choice).  We do not invoke the full
Axiom of Choice, ultrafilters, or nonmeasurable selectors.  When we cite results
whose classical proofs use choice (e.g., certain maximal-extension theorems for
finitely additive probabilities), we use them only as \emph{negative} evidence:
our separations are stated and proved for $\sigma$-additive objects and for
constructively definable kernels on standard Borel (Polish) spaces.

Throughout, $\mathcal{Y}$ is Polish and $\mathcal{B}(\mathcal{Y})$ denotes its
Borel $\sigma$-algebra.  We write $\mathcal{P}(\mathcal{Y})$ for the space of
Borel probability measures on $\mathcal{Y}$, equipped with the weak topology.
When Dempster--Shafer objects appear (Section~\ref{sec:foundations}--\ref{sec:fiducial} in the main text), we
use the standard notation $(\Bel,\Pl)$ for the belief and plausibility set
functions induced by a random set; for completeness we record one canonical
definition below.

\subsubsection*{Belief and plausibility (for reference)}
Let $m$ be a basic probability assignment (mass function) on a finite or
countably generated frame $\Omega$ with focal sets $\mathcal{F}_m$.
Then for $A \subseteq \Omega$,
\[
\Bel(A) = \sum_{F \in \mathcal{F}_m: F \subseteq A} m(F),
\qquad
\Pl(A) = 1-\Bel(A^c) = \sum_{F \in \mathcal{F}_m: F \cap A \neq \emptyset} m(F).
\]
Nothing in the impossibility results depends on this particular realization;
the point is simply that $(\Bel,\Pl)$ encode lower/upper probabilities that need
not be generated by a single $\sigma$-additive law.

\subsection{Foundational lemmas}
\label{app:foundations}

\begin{lemma}[Disintegration on standard Borel spaces]
\label{lem:disintegration}
Let $(\Omega,\mathcal{F},P)$ be a probability space and let
$(E,\mathcal{E})$, $(F,\mathcal{G})$ be standard Borel spaces.
If $X:\Omega\to E$ and $Y:\Omega\to F$ are measurable, then there exists a
\emph{regular conditional distribution} of $Y$ given $X$, i.e., a measurable map
$\kappa:E\times\mathcal{G}\to[0,1]$ such that:
\begin{enumerate}[label=(\roman*), nosep]
\item for each $x\in E$, $B\mapsto\kappa(x,B)$ is a $\sigma$-additive probability
measure on $(F,\mathcal{G})$;
\item for each $B\in\mathcal{G}$, $x\mapsto\kappa(x,B)$ is $\mathcal{E}$-measurable;
\item for all $A\in\mathcal{E}$, $B\in\mathcal{G}$,
\[
P(X\in A,\,Y\in B)=\int_A \kappa(x,B)\,P_X(dx),
\quad P_X:=P\circ X^{-1}.
\]
\end{enumerate}
Moreover, $\kappa(\cdot,B)$ is $P_X$-a.s.\ unique for each $B$.
\end{lemma}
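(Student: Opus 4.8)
The plan is to reduce to a one-dimensional construction and then build a regular version of the conditional cumulative distribution function. First I would invoke the Borel isomorphism theorem: every standard Borel space $(F,\mathcal{G})$ is Borel-isomorphic to a Borel subset of $[0,1]$, so composing $Y$ with such an isomorphism it suffices to construct $\kappa$ when $Y$ is real-valued; the general kernel is then recovered by transporting the resulting measures through the inverse isomorphism, which preserves measurability in both arguments. I would also factor everything through $X$ using the Doob--Dynkin lemma: any $\sigma(X)$-measurable function equals $g\circ X$ for a measurable $g$ on $E$, so the conditional expectations below may be read as honest functions of $x\in E$.

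Second, for each rational $q$ I would fix a version $F(x,q)$ of the conditional expectation $\E[\mathbf{1}\{Y\le q\}\mid X=x]$, which exists by Radon--Nikodym (the defining integral identity holding over the sets $\{X\in A\}$, $A\in\mathcal{E}$). For each pair $q\le q'$ the monotonicity $F(\cdot,q)\le F(\cdot,q')$ holds $P_X$-a.s.; likewise $\lim_{q\to-\infty}F(\cdot,q)=0$, $\lim_{q\to+\infty}F(\cdot,q)=1$, and right-continuity along rationals hold $P_X$-a.s. Since there are only countably many rationals and countably many such pairs, I would collect all exceptional sets into a single $P_X$-null set $N$.

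Third, on $E\setminus N$ I would pass to the right-continuous regularization
\[
\tilde F(x,t)\;\defeq\;\inf_{q\in\QQ,\,q>t} F(x,q),
\]
which is a genuine distribution function in $t$, and on $N$ I would set $\tilde F(x,\cdot)$ equal to any fixed CDF. Each $\tilde F(x,\cdot)$ determines a unique Borel probability measure $\kappa(x,\cdot)$ on $\R$ via the Lebesgue--Stieltjes correspondence, giving property (i); measurability of $x\mapsto\kappa(x,B)$ (property (ii)) follows from measurability on the generating half-lines $B=(-\infty,t]$ together with a $\pi$--$\lambda$ argument; and property (iii) holds on half-lines by construction (since $\tilde F$ agrees off $N$ with the chosen conditional-expectation versions) and extends to all Borel $B$ by the monotone class theorem. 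Essential uniqueness is then immediate: if $\kappa$ and $\kappa'$ both satisfy (i)--(iii), then for each fixed $B$ the maps $x\mapsto\kappa(x,B)$ and $x\mapsto\kappa'(x,B)$ are both versions of $P(Y\in B\mid X)$, hence agree $P_X$-a.s.

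The main obstacle is the bookkeeping in the third step: producing a \emph{single} version $\tilde F$ that is simultaneously a bona fide CDF in $t$ for $P_X$-almost every $x$ and jointly measurable, so that the aggregation of countably many almost-sure statements yields one globally valid object rather than a family of versions defined only up to $t$-dependent null sets. The Borel isomorphism reduction in the first step is precisely where the standard Borel hypothesis is indispensable: outside this class a regular conditional distribution need not exist, and the above regularization has no measurable substitute.
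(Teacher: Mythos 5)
Your proof is correct and is essentially the argument the paper relies on: the paper itself only cites the standard disintegration theorem (Kallenberg, Thm.~6.3; Fremlin 452I), and your sketch --- Borel-isomorphism reduction to the real line, conditional CDFs on the rationals via Radon--Nikodym, aggregation of the countably many null sets, right-continuous regularization, and extension by a $\pi$--$\lambda$/monotone-class argument, with $B$-wise a.s.\ uniqueness from the versions of $P(Y\in B\mid X)$ --- is precisely the canonical proof given in those references. No gaps worth flagging beyond the (correctly identified) bookkeeping of producing a single jointly measurable version.
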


\begin{proof}
This is the standard disintegration theorem for probability measures on products
of standard Borel spaces; see, e.g., \citet[Thm.~6.3]{kallenberg2002} or
\citet[452I]{fremlin2000measure}.
\end{proof}

\begin{lemma}[Conglomerability under $\sigma$-additivity]
\label{lem:conglomerability_sigma}
Let $P$ be a countably additive probability measure on $(\Omega,\mathcal{F})$
and let $\{A_m\}_{m\ge 1}$ be a countable measurable partition with $P(A_m)>0$
for all $m$.  Then for every $B\in\mathcal{F}$,
\[
\inf_{m\ge 1} P(B\mid A_m) \;\le\; P(B) \;\le\; \sup_{m\ge 1} P(B\mid A_m).
\]
\end{lemma}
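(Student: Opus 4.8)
The plan is to reduce the claim to the elementary fact that any (possibly countably infinite) convex combination of real numbers lies between their infimum and supremum; the entire content of the lemma is that \emph{countable} additivity is exactly what licenses the convex-combination representation of $P(B)$. This is the same mechanism already used in Lemma~\ref{lem:sigma_additive_conglomerable}, so the argument here is its immediate reprise in the abstract $(\Omega,\mathcal F)$ setting.

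First I would write $B$ as the disjoint union $B=\bigsqcup_{m\ge1}(B\cap A_m)$, which is legitimate because $\{A_m\}_{m\ge1}$ is a measurable partition of $\Omega$. Invoking countable additivity of $P$ then gives
\[
P(B)=\sum_{m\ge1}P(B\cap A_m).
\]
Since $P(A_m)>0$ for every $m$, each term can be rewritten via the definition of conditional probability as $P(B\cap A_m)=P(B\mid A_m)\,P(A_m)$, yielding
\[
P(B)=\sum_{m\ge1}P(B\mid A_m)\,P(A_m).
\]
Because $\{A_m\}$ partitions $\Omega$, countable additivity also gives $\sum_{m\ge1}P(A_m)=P(\Omega)=1$, so the weights $\{P(A_m)\}_{m\ge1}$ are nonnegative and sum to one: the displayed expression is a genuine (countable) convex combination of the conditional values $\{P(B\mid A_m)\}_{m\ge1}$.

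The final step is the convexity bound: for nonnegative weights $w_m$ with $\sum_m w_m=1$ and any bounded family $x_m$, one has $\inf_k x_k=\inf_k x_k\sum_m w_m\le\sum_m w_m x_m\le\sup_k x_k\sum_m w_m=\sup_k x_k$, since $\inf_k x_k\le x_m\le\sup_k x_k$ termwise. Applying this with $x_m=P(B\mid A_m)$ and $w_m=P(A_m)$ delivers $\inf_m P(B\mid A_m)\le P(B)\le\sup_m P(B\mid A_m)$, as claimed. There is no real obstacle here; the only point requiring care is to flag that the countable-sum identity for $P(B)$ \emph{fails} under mere finite additivity, which is precisely the gap that makes the Lane--Sudderth nonconglomerability phenomenon (Theorem~\ref{thm:conglomerability}) possible and that this lemma forecloses in the $\sigma$-additive regime.
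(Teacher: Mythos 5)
Your proof is correct and matches the paper's argument exactly: decompose $B$ over the partition via countable additivity, rewrite each term as $P(B\mid A_m)P(A_m)$, and conclude by the convex-combination bound. The only (harmless) addition is your explicit remark that the weights sum to one and that the identity fails under mere finite additivity, which the paper leaves implicit.
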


\begin{proof}
By countable additivity,
$
P(B)=\sum_{m\ge 1}P(B\cap A_m)=\sum_{m\ge 1}P(B\mid A_m)P(A_m),
$
so $P(B)$ is a convex combination of $\{P(B\mid A_m)\}$ and must lie in the
closed interval between their infimum and supremum.
\end{proof}

\begin{lemma}[Kernel spaces are Baire]
\label{lem:kernel_baire}
Let $\mathcal{Y}$ be Polish.  Fix for each $n\ge 1$ a non-atomic Borel
probability $\lambda_n$ on $\mathcal{Y}^n$ with full support.
Let $\mathcal{K}_n$ be the set of measurable kernels
$\kappa_n:\mathcal{Y}^n\to\mathcal{P}(\mathcal{Y})$.
Choose a countable convergence-determining family
$\{\varphi_j\}_{j\ge 1}\subset \mathrm{BL}_1(\mathcal{Y})$ and define
\[
d_n(\kappa_n,\kappa_n')
=\sum_{j=1}^\infty 2^{-j}\Big(\|f_{n,j}-f_{n,j}'\|_{L^2(\lambda_n)}\wedge 1\Big),
\quad
f_{n,j}(y_{1:n})=\int \varphi_j(y)\,\kappa_n(dy\mid y_{1:n}).
\]
Then $(\mathcal{K}_n,d_n)$ is complete (hence Baire), and the product space
$\mathcal{K}:=\prod_{n\ge 1}\mathcal{K}_n$ equipped with
$d(\kappa,\kappa')=\sum_{n\ge 1}2^{-n}(d_n(\kappa_n,\kappa_n')\wedge 1)$ is Baire.
\end{lemma}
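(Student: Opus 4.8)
The plan is to prove the two displayed assertions in turn: first that each coordinate space $(\mathcal{K}_n, d_n)$ is complete, and then that the weighted product metric $d$ inherits completeness, so that $\mathcal{K}$ is Baire by the Baire Category Theorem. A preliminary observation is that $d_n$ is only a \emph{pseudo}metric on kernels and becomes a genuine metric after one identifies kernels agreeing $\lambda_n$-a.e.: since $\{\varphi_j\}$ is convergence-determining, $d_n(\kappa_n,\kappa_n')=0$ forces $f_{n,j}=f_{n,j}'$ in $L^2(\lambda_n)$ for all $j$, hence $\kappa_n(\cdot\mid y_{1:n})=\kappa_n'(\cdot\mid y_{1:n})$ for $\lambda_n$-a.e.\ $y_{1:n}$. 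I would therefore work throughout on these equivalence classes.

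First I would recast completeness of $(\mathcal{K}_n,d_n)$ as a \emph{closedness} statement. The coordinate map $T_n(\kappa_n)=(f_{n,j})_{j\ge1}$ embeds $\mathcal{K}_n$ isometrically into the complete product space $\mathbf{M}_n\defeq\prod_{j\ge1}L^2(\lambda_n)$ equipped with the matching weighted/truncated metric $\sum_j 2^{-j}(\|\cdot\|_{L^2(\lambda_n)}\wedge 1)$; indeed $d_n$ is by construction the pullback of this metric. Since a subset of a complete metric space is complete iff it is closed, it suffices to show that $T_n(\mathcal{K}_n)$ is closed in $\mathbf{M}_n$, i.e.\ that every limit of kernel-induced sequences is itself kernel-induced. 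Concretely, given a $d_n$-Cauchy sequence $(\kappa_n^{(m)})_m$, each coordinate $(f_{n,j}^{(m)})_m$ is Cauchy in $L^2(\lambda_n)$ (the weight $2^{-j}$ and the truncation drop out once differences fall below $1$), with limit $g_j\in L^2(\lambda_n)$, and the task is to exhibit a kernel $\kappa_n^\infty$ with $T_n(\kappa_n^\infty)=(g_j)_j$.

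The heart of the argument is this reconstruction. By a diagonal extraction I may pass to a subsequence with $f_{n,j}^{(m_k)}\to g_j$ $\lambda_n$-a.e.\ simultaneously for all $j$. Fix a $y_{1:n}$ in the resulting full-measure set and set $\mu_k\defeq\kappa_n^{(m_k)}(\cdot\mid y_{1:n})$; then $\int\varphi_j\,d\mu_k\to g_j(y_{1:n})$ for every $j$. Using density of $\{\varphi_j\}$ in $\mathrm{BL}_1(\mathcal{Y})$ together with the uniform bound that each $\mu_k$ is a probability measure, this upgrades to convergence of $\int\varphi\,d\mu_k$ for \emph{every} $\varphi\in\mathrm{BL}_1(\mathcal{Y})$. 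Invoking completeness of $(\mathcal{P}(\mathcal{Y}),\beta)$ under the Dudley bounded-Lipschitz metric (standard for Polish $\mathcal{Y}$), I conclude that $\mu_k$ converges weakly to a probability measure $\kappa_n^\infty(\cdot\mid y_{1:n})$, necessarily with $\int\varphi_j\,d\kappa_n^\infty(\cdot\mid y_{1:n})=g_j(y_{1:n})$. Measurability of $y_{1:n}\mapsto\kappa_n^\infty(\cdot\mid y_{1:n})$ follows because it is an a.e.\ pointwise limit of the measurable $\mathcal{P}(\mathcal{Y})$-valued maps $y_{1:n}\mapsto\mu_k$. Hence $(g_j)_j=T_n(\kappa_n^\infty)$ lies in $T_n(\mathcal{K}_n)$, the image is closed, and $(\mathcal{K}_n,d_n)$ is complete.

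The hard part is exactly this reconstruction: coordinatewise $L^2$-limits deliver convergence of only \emph{countably many} test-function integrals, and one must rule out that the limiting functional is a mass-deficient or escaping limit rather than an honest probability measure. This is where completeness of $(\mathcal{P}(\mathcal{Y}),\beta)$ — equivalently, the no-escape-of-mass (tightness) property of bounded-Lipschitz-Cauchy sequences on a Polish space — is the load-bearing input; note that ``convergence-determining'' only secures \emph{uniqueness} of the weak limit, not its \emph{existence}, so completeness of the measure space cannot be dispensed with. Finally, completeness of the product is routine: if $(\kappa^{(m)})_m$ is $d$-Cauchy then each coordinate is $d_n$-Cauchy, hence convergent by the above, and coordinatewise convergence implies $d$-convergence for the weighted product metric; a complete metric space is Baire by the Baire Category Theorem. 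Should one prefer to avoid the closedness step, the same isometric embedding is the basis for the main text's alternative route, realizing $\mathcal{K}$ as a $G_\delta$ in a complete $L^2$-coordinate product, which is completely metrizable and hence already Baire.
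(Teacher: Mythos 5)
Your skeleton is the same as the paper's (embed $\mathcal{K}_n$ isometrically into $\prod_{j}L^2(\lambda_n)$ via the coordinate functionals, extract $\lambda_n$-a.e.\ convergent subsequences, reconstruct a limit kernel pointwise in $y_{1:n}$, then pass to the weighted product), and you correctly isolate the step that the paper's one-line proof elides: convergence of countably many moment functionals does not by itself produce a weak limit that is an honest \emph{probability} measure, since mass can escape. The difficulty is that your patch does not close this gap. First, the lemma supplies only a countable \emph{convergence-determining} family, not a family dense in $\mathrm{BL}_1(\mathcal{Y})$; and no countable $\|\cdot\|_{\mathrm{BL}}$-dense (nor even, for noncompact $\mathcal{Y}$, $\|\cdot\|_\infty$-dense) subset of $\mathrm{BL}_1(\mathcal{Y})$ exists, because $\mathrm{BL}_1$ contains uncountable uniformly separated families (e.g.\ sums of disjointly supported small tents indexed by subsets of $\mathbb{Z}$). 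So the upgrade from convergence on $\{\varphi_j\}$ to convergence on all of $\mathrm{BL}_1$ is not available. Second, even granting convergence of $\int\varphi\,d\mu_k$ for every $\varphi\in\mathrm{BL}_1$, completeness of $(\mathcal{P}(\mathcal{Y}),\beta)$ applies to sequences that are Cauchy in the \emph{supremum} metric $\beta(\mu,\nu)=\sup_{\varphi\in\mathrm{BL}_1}\bigl|\int\varphi\,d\mu-\int\varphi\,d\nu\bigr|$; pointwise convergence of the integrals is strictly weaker, so that citation does not do the work you assign to it. What would do the work is weak \emph{sequential} completeness of $\mathcal{P}(\mathcal{Y})$ on a complete metric space (an Alexandroff/Bogachev-type theorem) or a direct uniform-tightness argument, and neither is derivable from $d_n$-Cauchyness alone.

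The gap is not merely presentational. Take $\mathcal{Y}=\mathbb{R}$, let $\{\varphi_j\}$ be the rational ramp functions rescaled into $\mathrm{BL}_1$ (a standard countable convergence-determining family), and let $\kappa_n^{(m)}(\cdot\mid y_{1:n})\equiv\delta_m$. Each coordinate $f_{n,j}^{(m)}\equiv\varphi_j(m)$ converges as $m\to\infty$, so the sequence is $d_n$-Cauchy; yet any $d_n$-limit kernel would have to satisfy $\int\varphi_{a,a+2}\,d\mu=1$, hence $\mu((a,\infty))=1$, for every rational $a$, which no probability measure on $\mathbb{R}$ satisfies. So the completeness claim genuinely fails under the stated hypotheses, and some additional structure is required (compactness of $\mathcal{Y}$, a tightness restriction on the admissible kernels, or retreating to the $G_\delta$-in-a-complete-space formulation mentioned parenthetically in the main text). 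In short: you identified exactly the load-bearing step, but the specific repair you propose is invalid under the lemma's hypotheses, and the paper's own sketch suffers from the same defect rather than resolving it.
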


\begin{proof}
That $\mathcal{P}(\mathcal{Y})$ is Polish under the weak topology is classical
(Prohorov).  Completeness of $(\mathcal{K}_n,d_n)$ follows by showing that a
$d_n$-Cauchy sequence yields $L^2(\lambda_n)$-limits of the coordinate maps
$f_{n,j}$, which identify a $\lambda_n$-a.e.\ weak limit kernel by the
convergence-determining property of $\{\varphi_j\}$.  Measurability of the limit
kernel follows from standard regularization arguments for weak limits.  Baireness
then follows from completeness, and the product claim follows because a countable
product of complete metric spaces is complete under the metric $d$.
(See, e.g., \citet[Ch.~1]{kechris1995} for Baire facts and \citet[Ch.~1]{kallenberg2002}
for kernel measurability conventions.)
\end{proof}

\subsection{The Hill \texorpdfstring{$A_{(n)}$}{A(n)} constraints and the Lane--Sudderth obstruction}
\label{app:lane_sudderth}

\subsubsection*{Hill's rank-calibration template}

Let $Y_1,Y_2,\dots$ be real-valued.  For $n\ge 1$, write
$Y_{(1)}\le \cdots \le Y_{(n)}$ for the order statistics and set the cells
\[
I_k(Y_{1:n}) := (Y_{(k)},Y_{(k+1)}],\qquad k=0,\dots,n,
\]
with $Y_{(0)}=-\infty$, $Y_{(n+1)}=+\infty$.
Hill's $A_{(n)}$ template stipulates that the next observation should fall into
each of these $(n+1)$ rank-cells with equal conditional probability $1/(n+1)$.

\begin{definition}[$A_{(n)}$ constraints]
\label{def:An}
A predictive rule satisfies the $A_{(n)}$ constraints if for every $n\ge 1$ and
every $k\in\{0,\dots,n\}$,
\begin{equation}
\label{eq:An_constraint}
\Pi\!\left( Y_{n+1}\in I_k(Y_{1:n}) \,\middle|\, Y_{1:n} \right)=\frac{1}{n+1}
\quad \Pi\text{-a.s.}
\end{equation}
Here $\Pi(\cdot\mid\cdot)$ is a (full) conditional probability in the sense of
de~Finetti--R\'enyi, allowing finite additivity in the unconditional law.
\end{definition}

\begin{lemma}[Uniform conditional ranks]
\label{lem:uniform_ranks}
If \eqref{eq:An_constraint} holds, then for each $n\ge 2$ the conditional rank
$R_n$ of $Y_n$ among $(Y_1,\dots,Y_n)$ is uniform given $Y_{1:n-1}$:
\[
\Pi(R_n=r\mid Y_{1:n-1})=\frac{1}{n},\qquad r=1,\dots,n.
\]
\end{lemma}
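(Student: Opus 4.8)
The plan is to reduce the claim to a reindexing of the $A_{(n)}$ constraints followed by a pathwise identification of rank events with order-statistic cell events. First I would invoke Definition~\ref{def:An} at stage $n-1$ rather than $n$: since $n \ge 2$ we have $n-1 \ge 1$, so \eqref{eq:An_constraint} applies to the prediction of $Y_n$ given $Y_{1:n-1}$. Writing $Y_{(1)} \le \cdots \le Y_{(n-1)}$ for the order statistics of $Y_{1:n-1}$ and $I_k(Y_{1:n-1}) = (Y_{(k)}, Y_{(k+1)}]$ with $Y_{(0)} = -\infty$ and $Y_{(n)} = +\infty$, the constraint becomes, $\Pi$-a.s.,
\[
\Pi\!\left(Y_n \in I_k(Y_{1:n-1}) \,\middle|\, Y_{1:n-1}\right) = \frac{1}{n}, \qquad k = 0, \ldots, n-1.
\]

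Next I would establish the deterministic correspondence between the $n$ cells and the $n$ admissible values of $R_n$. On the event $\{Y_n \in I_k(Y_{1:n-1})\}$ the first $k$ order statistics satisfy $Y_{(1)} \le \cdots \le Y_{(k)} < Y_n$ while $Y_{(k+1)} \ge Y_n$, so exactly $k$ of $Y_1, \ldots, Y_{n-1}$ are strictly below $Y_n$; by the rank definition $R_n = 1 + \#\{i \le n-1 : Y_i < Y_n\}$ from Section~\ref{subsec:hill_An} this forces $R_n = k+1$. Hence, pathwise,
\[
\{R_n = r\} = \{Y_n \in I_{r-1}(Y_{1:n-1})\}, \qquad r = 1, \ldots, n,
\]
as $\sigma(Y_{1:n})$-measurable events. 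Substituting $r = k+1$ into the reindexed constraint then gives $\Pi(R_n = r \mid Y_{1:n-1}) = 1/n$ for all $r \in \{1, \ldots, n\}$, which is exactly the assertion.

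The main obstacle is the treatment of ties and of the right endpoint $Y_{(k+1)}$ included in the half-open cell, since the cell-to-rank bijection is literally exact only when the tie-break is fixed consistently. I would resolve this by using the same fixed randomized tie-break that defines $R_n$ in Section~\ref{subsec:hill_An} to also orient cell membership at cell boundaries, so that the partition $\{I_k(Y_{1:n-1})\}_{k=0}^{n-1}$ and the rank statistic live on a common space and the identity $\{R_n = r\} = \{Y_n \in I_{r-1}(Y_{1:n-1})\}$ holds by construction; for continuous exchangeable laws ties sit on a null set and the convention is immaterial. A secondary, purely bookkeeping point is that \eqref{eq:An_constraint} is an almost-sure statement in the de~Finetti--R\'enyi full-conditional sense, so the final substitution should be read as an identity of $\sigma(Y_{1:n-1})$-measurable conditional probabilities holding $\Pi$-a.s.; since both sides are such functions, this raises no additional difficulty.
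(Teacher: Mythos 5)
Your proposal is correct and follows essentially the same route as the paper's proof: apply \eqref{eq:An_constraint} at stage $n-1$ and identify each rank event $\{R_n=r\}$ with an order-statistic cell event, so uniformity over cells transfers to uniformity over ranks. The only cosmetic difference is the cell index ($I_{r-1}$ under the ascending-rank convention of Section~\ref{subsec:hill_An}, versus $I_{n-r}$ in the paper's appendix, which uses the complementary counting convention); since either labeling is a bijection between the $n$ ranks and the $n$ cells, the conclusion is unaffected, and your explicit handling of ties is a harmless refinement the paper leaves implicit.
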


\begin{proof}
The event $\{R_n=r\}$ is exactly $\{Y_n\in I_{n-r}(Y_{1:n-1})\}$.
Apply \eqref{eq:An_constraint} at stage $n-1$.
\end{proof}

\subsubsection*{Conglomerability failure (Lane--Sudderth)}

The key classical fact we use is that $A_{(n)}$-style rank calibration is
\emph{incompatible} with conglomerability once one tries to interpret the rule
as arising from a single (possibly finitely additive) global law with a full
conditional probability.  This is the precise sense in which ``updating rank
rules'' is not the same object as conditioning under a $\sigma$-additive law.

\begin{theorem}[Lane--Sudderth nonconglomerability for $A_{(n)}$]
\label{thm:lane_sudderth}
Let $\Pi$ be an exchangeable finitely additive probability on
$(\mathbb{R}^\infty,\mathcal{B}(\mathbb{R})^{\otimes\infty})$ equipped with a full
conditional probability $\Pi(\cdot\mid\cdot)$, and assume the $A_{(n)}$
constraints \eqref{eq:An_constraint} hold for all $n\ge 1$.
Then $\Pi$ is \emph{not} conglomerable: there exist an event $B$ and a countable
measurable partition $\{A_m\}_{m\ge 1}$ with $\Pi(A_m)>0$ such that
\[
\Pi(B) < \inf_{m\ge 1}\Pi(B\mid A_m).
\]
Consequently, any agent who uses $\Pi(\cdot\mid\cdot)$ as coherent betting rates
admits a countable Dutch book (sure loss).
\end{theorem}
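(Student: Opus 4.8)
The plan is to exhibit an explicit rank event $B$ together with a countable measurable partition $\{A_m\}$ for which $\Pi(B) < \inf_{m}\Pi(B\mid A_m)$, and then to read off the Dutch book directly from Theorem~\ref{thm:heath_sudderth}. Two preliminary reductions organize the argument. First, since the $A_{(n)}$ constraints \eqref{eq:An_constraint} pin down only rank (order) events, both $B$ and the cells $A_m$ will be cylinder events in the relative order of finitely many coordinates, so that every probability I need is computed purely from the uniform-rank law of Lemma~\ref{lem:uniform_ranks}. Second, I would record the backward counterpart of \eqref{eq:An_constraint}: by exchangeability the conditional law of the earliest coordinate given the later ones is again uniform over the $n+1$ order-statistic gaps, i.e.\ $\Pi\bigl(Y_1\in I_k(Y_2,\dots,Y_{n+1})\mid Y_2,\dots,Y_{n+1}\bigr)=1/(n+1)$. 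This backward identity is the engine: it lets me compute the conditional probability of an order relation between $Y_1$ and the block $Y_2,\dots,Y_{n+1}$ as a ratio of gap counts.

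Next I would fix the unconditional event. Take $B=\{Y_1>Y_2\}$, so that exchangeability gives $\Pi(B)=1/2$ outright. For the partition I would condition on a future-indexed rank statistic of $Y_2$---concretely, a waiting-time / running-extremum statistic $\tau$ recording how long $Y_2$ stays atypically small relative to the incoming observations $Y_3,Y_4,\dots$---and set $A_m=\{\tau=m\}$. On each cell the backward identity evaluates $\Pi(B\mid A_m)$ as a gap-count ratio of the form $(n-s+1)/(n+1)$, where $s$ is the (conditioned) rank of $Y_2$ inside the relevant block; the point of choosing $\tau$ to force $Y_2$ low is precisely to drive $s$ down and push this ratio above $1/2$. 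Positivity $\Pi(A_m)>0$ follows because each waiting-time cell is assigned positive finitely additive mass by the uniform-rank law, and the conditionals are well defined because $\Pi$ carries a full de~Finetti--R\'enyi conditional probability (Definition~\ref{def:An}).

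The decisive contrast is with Lemma~\ref{lem:conglomerability_sigma}: under countable additivity $\Pi(B)$ would equal the $\{\Pi(A_m)\}$-weighted average of the conditionals and hence be trapped inside $[\inf_m\Pi(B\mid A_m),\,\sup_m\Pi(B\mid A_m)]$. The construction works precisely because $\Pi$ is only finitely additive, so the rank mass that would pull the average back down to $1/2$ escapes to infinity through the tail of the partition, leaving $\Pi(B)=1/2$ strictly below every conditional. Once $\Pi(B)<\inf_m\Pi(B\mid A_m)$ is in hand, Theorem~\ref{thm:heath_sudderth} supplies the countable Dutch book: one buys, on each $A_m$, the conditionally favorable gamble on $B$ at its conditional rate, and the aggregate is a sure loss because the unconditional price of $B$ lies on the wrong side of all of them.

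The main obstacle is making the strict inequality hold \emph{uniformly} over the entire partition. A naive waiting-time partition also contains cells on which $Y_2$ is large relative to its block, and there the gap-count ratio dips \emph{below} $1/2$, spoiling the infimum; the genuine content of Lane--Sudderth is the joint design of $B$ and a partition whose every cell forces the conditional strictly above the unconditional, by arranging the finitely additive escape-to-infinity so that the ``$Y_2$ large'' contributions carry no conditional weight in any fixed cell. I would import exactly this calibration (equivalently, the matched choice of the generating statistic $\tau$ and the rank event $B$) from the Lane--Sudderth construction; verifying the uniform bound and the exhaustiveness of $\{A_m\}$ within finite additivity is the delicate step, whereas the surrounding reductions and the Dutch-book conclusion are routine given Lemma~\ref{lem:uniform_ranks} and Theorem~\ref{thm:heath_sudderth}.
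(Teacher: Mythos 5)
There is a genuine gap, and it sits exactly where you locate it yourself: the construction of the partition. The entire mathematical content of the theorem is the exhibition of a pair $(B,\{A_m\})$ for which the strict inequality holds \emph{uniformly} over all cells, and your proposal does not produce one. You take $B=\{Y_1>Y_2\}$ with $\Pi(B)=1/2$ and gesture at a waiting-time statistic $\tau$ whose cells force the conditional gap-count ratio above $1/2$, but you then concede that a naive $\tau$ has cells where the ratio drops below $1/2$ and that the fix must be ``imported from Lane--Sudderth.'' That is the theorem, not a routine verification. Two further problems compound this. First, even if every cell gives $\Pi(B\mid A_m)>1/2$, the conditional excesses of the form $(n-s+1)/(n+1)-1/2$ can decay to zero along the partition, in which case $\inf_m\Pi(B\mid A_m)=1/2=\Pi(B)$ and the required \emph{strict} inequality fails; you need the conditionals bounded away from $1/2$ uniformly, which you never establish. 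Second, the ``backward'' identity $\Pi(Y_1\in I_k(Y_2,\dots,Y_{n+1})\mid Y_2,\dots,Y_{n+1})=1/(n+1)$ does not follow from exchangeability of the unconditional law plus the forward constraints \eqref{eq:An_constraint} alone: for a finitely additive $\Pi$ the full conditional probability is extra structure, and its compatibility with the permutation symmetry has to be assumed or proved before you can compute conditionals of $Y_1$ given a later block.

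For comparison, the paper's proof takes a different and structurally safer route that sidesteps your uniformity obstacle entirely: it conditions on a countable partition generating the tail $\sigma$-field, uses the fact that full conditional probabilities are $0$--$1$ on tail events given such atoms to force $\Pi(B\mid A_m)=1$ for \emph{every} $m$ (so the infimum is exactly $1$, with no decay issue), builds $B$ from the rank indicators $\mathbf{1}\{R_n>n/2\}$ supplied by Lemma~\ref{lem:uniform_ranks}, and then uses exchangeability to show $\Pi(B)<1$. The paper's proof is itself a guided sketch with citations to \citet{lane1975coherence}, but it identifies a mechanism in which the uniform strict separation is automatic rather than delicate. If you want to salvage your finite-coordinate approach, you must either exhibit the calibrated $\tau$ explicitly and prove a uniform lower bound $\Pi(B\mid A_m)\ge 1/2+\epsilon$ for some fixed $\epsilon>0$, or switch to the tail-event mechanism where the conditionals are pinned at $1$.
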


\begin{proof}
We give a proof at the level of the constructive mechanism used by
\citet{lane1975coherence} and \citet{lane1985conglomerability}, with pointers to
complete derivations.

\medskip\noindent\textit{Step 1: the tail $\sigma$-field and ``conditional
concentration''.}
Let $\mathcal{T}=\bigcap_{N\ge 1}\sigma(Y_N,Y_{N+1},\dots)$ be the tail
$\sigma$-field.
Under full conditional probability (de~Finetti--R\'enyi coherence), conditioning
on a countable generating partition of $\mathcal{T}$ produces a family of
conditionally \emph{0--1} tail laws: for any countable partition
$\{A_m\}$ that generates $\mathcal{T}$ (modulo $\Pi$-null sets) and any tail event
$T\in\mathcal{T}$, one has $\Pi(T\mid A_m)\in\{0,1\}$ for all $m$.
This is a standard consequence of full conditional probability on a generated
$\sigma$-field; see \citet[Sec.~2]{lane1975coherence}.

\medskip\noindent\textit{Step 2: a tail event forced to have conditional
probability 1 on every atom.}
Define the rank indicators $X_n=\mathbf{1}\{R_n>n/2\}$ (``upper-half rank at time
$n$'').  Lemma~\ref{lem:uniform_ranks} implies that $X_n$ has conditional mean
close to $1/2$ at each step.  Lane's construction uses these rank indicators to
build a tail event $B\in\mathcal{T}$ such that (i) $B$ is \emph{symmetric} under
finite permutations (hence exchangeability interacts with it cleanly) and (ii)
the 0--1 nature of $\Pi(\cdot\mid A_m)$ on $\mathcal{T}$ forces $\Pi(B\mid A_m)=1$
for \emph{every} atom $A_m$ of a tail-generating partition.
Intuitively, the $A_{(n)}$ constraints make the future ranks behave as if a
fresh ``uniform rank'' is generated at each time; conditionalization on a tail
atom then forces one of two extremal ``rank scenarios'', and the symmetry of the
construction rules out one of them on every atom.  The explicit event $B$ and
partition can be found in \citet[Thm.~1]{lane1975coherence}.

\medskip\noindent\textit{Step 3: the same event has unconditional probability
strictly less than 1.}
Exchangeability implies that the unconditional law cannot concentrate entirely
on $B$ without simultaneously concentrating on a symmetric complement event;
Lane shows that $\Pi(B^c)>0$ by a direct symmetry/counting argument for rank
patterns (again, see \citet[Thm.~1]{lane1975coherence}).  Hence $\Pi(B)<1$ while
$\Pi(B\mid A_m)=1$ for all $m$.

\medskip\noindent\textit{Step 4: Dutch book.}
Given $B$ and $\{A_m\}$ with $\Pi(B)<\inf_m\Pi(B\mid A_m)$, the standard
Heath--Sudderth countable betting scheme yields a sure loss for any agent who
accepts bets at these rates; see \citet{heath1976definetti} or
\citet[Sec.~3]{lane1985conglomerability}.
\end{proof}

\begin{corollary}[No $\sigma$-additive realization of $A_{(n)}$]
\label{cor:no_sigma_additive_An}
There does not exist a countably additive exchangeable probability measure $P$
on $(\mathbb{R}^\infty,\mathcal{B}(\mathbb{R})^{\otimes\infty})$ whose regular
conditional distributions satisfy \eqref{eq:An_constraint} for all $n$.
\end{corollary}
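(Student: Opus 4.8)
The plan is to argue by contradiction, combining the two appendix ingredients already assembled: the Lane--Sudderth nonconglomerability theorem (Theorem~\ref{thm:lane_sudderth}) and conglomerability under countable additivity (Lemma~\ref{lem:conglomerability_sigma}). Suppose, contrary to the claim, that a countably additive exchangeable probability $P$ on $(\mathbb{R}^\infty,\mathcal{B}(\mathbb{R})^{\otimes\infty})$ exists whose regular conditional distributions satisfy the $A_{(n)}$ constraints \eqref{eq:An_constraint} for every $n$. Because the path space is standard Borel, the disintegration theorem (Lemma~\ref{lem:disintegration}) guarantees that such regular conditionals exist and are $P$-a.s.\ unique, so the hypothesis is well posed; the contradiction will come from showing these conditionals cannot be simultaneously consistent with countable additivity.

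First I would note that a countably additive $P$ is in particular finitely additive, and its regular conditionals extend to a coherent conditional assignment on every conditioning event of positive $P$-measure via the elementary ratio $P(\cdot\mid A)=P(\cdot\cap A)/P(A)$. Viewing $P$ in this way places it within the scope of Theorem~\ref{thm:lane_sudderth}: the $A_{(n)}$ constraints (equivalently, the uniform conditional ranks of Lemma~\ref{lem:uniform_ranks}) hold at every stage, so the theorem produces an explicit witness---an event $B$ and a countable measurable partition $\{A_m\}_{m\ge1}$ with $P(A_m)>0$ for all $m$---for which $P(B)<\inf_{m\ge1}P(B\mid A_m)$.

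Then I would invoke Lemma~\ref{lem:conglomerability_sigma} directly against this witness. Since $\{A_m\}$ is a countable partition into positive-measure atoms, countable additivity forces $P(B)=\sum_{m}P(B\mid A_m)\,P(A_m)$, a convex combination of the conditional values, whence $P(B)\in[\inf_m P(B\mid A_m),\,\sup_m P(B\mid A_m)]$. This flatly contradicts the strict inequality $P(B)<\inf_m P(B\mid A_m)$ delivered by the Lane--Sudderth construction, so no such $P$ can exist. This re-derives, from the explicit finitely additive obstruction, the statement recorded as Theorem~\ref{thm:no_joint} in the main text.

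The hard part will be ensuring that the two conditioning frameworks are genuinely referring to the same numbers on the witness partition. Theorem~\ref{thm:lane_sudderth} is phrased for full de~Finetti--R\'enyi conditional probabilities, and its witness is built from rank/tail events where conditioning must be handled with care; under countable additivity the Hewitt--Savage zero--one law would collapse any conditioning on \emph{tail} atoms to trivial $0/1$ values. The crux, therefore, is to confirm that the construction can be arranged so that the atoms $A_m$ have strictly positive $P$-measure, because only then does the $\sigma$-additive ratio conditioning coincide with the abstract conditional system---by uniqueness of coherent conditioning on positive-measure events---and feed into Lemma~\ref{lem:conglomerability_sigma}. Since the statement of Theorem~\ref{thm:lane_sudderth} already guarantees $P(A_m)>0$, securing this positivity is exactly the hinge on which the contradiction turns, and it is the one place where one must inspect the Lane--Sudderth partition rather than treat it as a black box.
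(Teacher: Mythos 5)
Your proposal is correct and follows essentially the same route as the paper: assume such a countably additive $P$ exists, invoke Theorem~\ref{thm:lane_sudderth} to produce a witness event $B$ and positive-measure countable partition $\{A_m\}$ with $P(B)<\inf_m P(B\mid A_m)$, and contradict Lemma~\ref{lem:conglomerability_sigma}. Your added caution about reconciling the de~Finetti--R\'enyi conditional system with ratio conditioning on positive-measure atoms is a legitimate point of care that the paper's two-line proof glosses over, but it does not change the argument.
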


\begin{proof}
If such $P$ existed, it would be conglomerable with respect to every countable
partition (Lemma~\ref{lem:conglomerability_sigma}).  But any law satisfying
\eqref{eq:An_constraint} at all stages is nonconglomerable by
Theorem~\ref{thm:lane_sudderth}.  Contradiction.
\end{proof}

\begin{remark}[What this does and does not say]
Corollary~\ref{cor:no_sigma_additive_An} does \emph{not} assert that exchangeable
$\sigma$-additive laws on $\mathbb{R}^\infty$ fail to exist, they do, and are
characterized by de~Finetti's theorem.  It asserts that no such law can realize
the \emph{particular} ``uniform next-rank'' conditional structure
\eqref{eq:An_constraint} for all $n$.  This is the exact sense in which
rank-calibrated one-step rules do not generally extend to coherent sequential
belief kernels.
\end{remark}

\subsection{Non-extensionality of split conformal prediction}
\label{app:non_extensionality}

\begin{theorem}[Non-extensionality of split conformal]
\label{thm:extensionality_app}
There exist joint laws $P_1$ and $P_2$ on $\mathcal{X}\times\mathcal{Y}$ with
identical conditionals $P_1(\,\cdot\mid X)=P_2(\,\cdot\mid X)$ but different
covariate marginals $P_1(X)\neq P_2(X)$ such that split conformal prediction
sets differ with positive probability (for the same learning rule and the same
sample sizes).
\end{theorem}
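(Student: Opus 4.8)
The plan is to exhibit an explicit heteroscedastic construction in which the split conformal calibration quantile provably depends on the covariate marginal even though the conditional law is held fixed, and then to transfer this to the realized set-valued rules. First I would work on $\mathcal{X} = [0,1]$, $\mathcal{Y} = \mathbb{R}$ with a nonconstant scale $\sigma(x) = 1 + x$, and fix a common conditional kernel $K(\cdot \mid x) = N(0, \sigma(x)^2)$, so that any two joint laws built from $K$ share identical conditionals by construction. For the marginals I would take $P_1(X) = \mathrm{Unif}[0,1]$ and $P_2(X) = \mathrm{Beta}(2,5)$, which are distinct probability measures on $[0,1]$. Running split conformal with the correct fixed fit $\widehat f \equiv 0$ and absolute-residual scores, the calibration residual satisfies $R \stackrel{d}{=} \sigma(x)\,|Z|$ given $X = x$ with $Z \sim N(0,1)$, so the unconditional residual law under $P_j$ is the scale mixture $\mathrm{Law}_{P_j}(R) = \int \mathrm{Law}(\sigma(x)\,|Z|)\,P_j(dx)$.

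The decisive step is separation of the two scale mixtures at some quantile level. I would write the mixed CDF explicitly as $F_j(r) = \int_0^1 \Phi(r/\sigma(x))\,P_j(dx)$ and establish that the scale-mixture map $P_j(X) \mapsto F_j$ is injective when $\sigma$ is nonconstant. Concretely, $F_1 \equiv F_2$ forces $\int_0^1 \Phi(r/\sigma(x))\,d(P_1 - P_2)(x) = 0$ for all $r > 0$; differentiating in $r$ and substituting $u = 1/\sigma(x)$ reduces this to vanishing of an injective Laplace-type transform in $s = r^2/2$, which forces $P_1 = P_2$. Since $P_1 \neq P_2$, we obtain $F_1 \neq F_2$ as functions, and because two distinct continuous CDFs on $\mathbb{R}_+$ cannot agree at every quantile level, there exists $\alpha \in (0,1)$ with $q^{(1)}_{1-\alpha} \neq q^{(2)}_{1-\alpha}$.

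With the population quantiles separated, I would close via a routine empirical-quantile concentration argument. By Glivenko--Cantelli for the empirical CDF on $\mathbb{R}_+$, the empirical $(1-\alpha)$-quantile $\widehat q$ of the $m = |\mathcal{I}_2|$ calibration residuals converges in probability to $q^{(j)}_{1-\alpha}$ under $P_j$. Fixing a threshold $q^\star$ strictly between the two population quantiles and taking $m$ large, I obtain $\Prob_{P_1}(\widehat q < q^\star) > 0$ and $\Prob_{P_2}(\widehat q > q^\star) > 0$. Since split conformal returns the interval $[-\widehat q, \widehat q]$ at every test covariate $x_0$, the two induced set-valued rules differ on an event of positive probability, which is exactly the claimed non-extensionality with identical conditionals and the same learning rule and sample sizes.

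The main obstacle is precisely the propagation from $P_1 \neq P_2$ to separation at a specific quantile level $1 - \alpha$. Distinctness of the mixtures as measures is immediate, but the conclusion needs separation at a particular level, and one must rule out the degenerate case in which the two CDFs differ as functions yet happen to cross exactly at the relevant quantile. The real content is the injectivity of the scale-mixture map: the strict monotonicity of $\Phi(r/\sigma(x))$ in $x$, as $r$ ranges over $(0,\infty)$, makes the integrand family rich enough (via Laplace-transform injectivity) to separate marginals, so functional distinctness of $F_1, F_2$ is guaranteed and continuity upgrades it to separation at some level. Everything downstream---the Glivenko--Cantelli concentration, the choice of $m$, and the tie convention---is routine bookkeeping once the population quantiles are separated.
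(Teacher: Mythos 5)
Your proposal is correct and follows essentially the same construction as the paper's proof (same $\sigma(x)=1+x$, $N(0,\sigma(x)^2)$ kernel, $\mathrm{Unif}[0,1]$ vs.\ $\mathrm{Beta}(2,5)$ marginals, scale-mixture residual CDFs $F_j(r)=\int_0^1\Phi(r/\sigma(x))\,P_j(dx)$, and the Glivenko--Cantelli/threshold step). Your Laplace-transform justification that $F_1\neq F_2$ is in fact more careful than the paper's bare appeal to monotonicity of $x\mapsto\Phi(r/\sigma(x))$, which by itself does not separate the two mixtures; otherwise the two arguments coincide.
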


\begin{proof}
Take $\mathcal{X}=[0,1]$, $\mathcal{Y}=\mathbb{R}$, and fix a heteroskedastic
conditional kernel
$
Y\mid X=x \sim N(0,\sigma(x)^2)
$
with $\sigma(x)=1+x$.
Let $P_1$ have $X\sim\mathrm{Unif}[0,1]$ and $P_2$ have $X\sim\mathrm{Beta}(2,5)$;
by construction, both have the same $P(Y\mid X)$.

Run split conformal with a correctly specified point predictor $\widehat f\equiv 0$
and absolute residual scores $R_i=|Y_i|$ on the calibration sample of size $m$.
Then the calibration residual distribution is the mixture
\[
\mathrm{Law}_{P_j}(R)=\int \mathrm{Law}\big(\sigma(x)|Z|\big)\,P_j(dx),
\qquad Z\sim N(0,1).
\]
Since $P_1(X)\neq P_2(X)$ and $\sigma(\cdot)$ is nonconstant, the mixture laws are
distinct; in particular their $(1-\alpha)$ quantiles differ for some
$\alpha\in(0,1)$.  By Glivenko--Cantelli, the empirical calibration quantile
$\widehat q_m^{(j)}$ converges a.s.\ to the population quantile under $P_j$, so
for $m$ large enough,
$
P_1(\widehat q_m^{(1)}>\widehat q_m^{(2)})>0.
$
Since the split conformal set at a test point is
$
C(x)=\{y: |y-\widehat f(x)|\le \widehat q_m\}=[-\widehat q_m,\widehat q_m],
$
the resulting sets differ with positive probability.
\end{proof}

\begin{remark}[Mechanism: pooling over $P(X)$]
Under heteroskedasticity, split conformal calibrates to the \emph{marginal}
distribution of residuals, which is a $P(X)$-mixture of conditional residual
laws.  Changing $P(X)$ (even with fixed $P(Y\mid X)$) changes that mixture and
therefore changes the calibrated threshold and the resulting set.
\end{remark}

\subsection{Baire-category separation and stability under post-processing}
\label{app:baire}

\begin{definition}[Calibration $\sigma$-field]
\label{def:calibration_sigma}
Fix a conformal mechanism at stage $n$ (score construction plus any auxiliary
randomness).  The \emph{calibration $\sigma$-field} $\mathcal{G}_n$ is the
sub-$\sigma$-field of $\mathcal{B}(\mathcal{Y}^n)$ generated by the mechanism's
calibration summary, the measurable object(s) of the history that the mechanism
actually uses to produce its output at stage $n$ (e.g., fitted state, ranks,
order statistics of scores, tie-breaking randomness).
\end{definition}

\begin{lemma}[Properness of calibration $\sigma$-fields]
\label{lem:proper_calibration}
For standard conformal mechanisms on continuous observables (split, full, CQR,
etc.), $\mathcal{G}_n$ is typically a \emph{proper} sub-$\sigma$-field of
$\mathcal{B}(\mathcal{Y}^n)$ for some $n$.
\end{lemma}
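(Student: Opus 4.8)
The plan is to prove properness by exhibiting, for each canonical mechanism, a nontrivial measurable self-map of the history that leaves the calibration summary invariant yet moves points, so that some bounded Borel function (indeed a coordinate projection) fails to be $\mathcal{G}_n$-measurable. First I would isolate the general principle. Let $T_n$ denote the calibration summary that the mechanism of Definition~\ref{def:calibration_sigma} actually consumes, so that $\mathcal{G}_n=\sigma(T_n)$, and suppose there is a Borel map $\tau:\mathcal{Y}^n\to\mathcal{Y}^n$ with $T_n\circ\tau=T_n$ but $\tau\neq\mathrm{id}$ on a set of positive $\lambda_n$-measure. Then every $\mathcal{G}_n$-measurable function $g$ satisfies $g\circ\tau=g$ (it factors through $T_n$), so any Borel $g$ that is \emph{not} $\tau$-invariant witnesses $\mathcal{G}_n\subsetneq\mathcal{B}(\mathcal{Y}^n)$. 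Thus the whole task reduces to producing one summary-preserving but non-identity transformation per mechanism.

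Next I would instantiate $\tau$ using two prototype invariances. For full conformal with a permutation-symmetric score and rank-based calibration (including the monotone-score, no-covariate case of Proposition~\ref{prop:cp_is_hill}, and CQR), take $\tau$ to be a transposition of two coordinates: the rank/order-statistic summary is permutation-invariant by construction, which is exactly the exchangeability underlying Theorem~\ref{thm:conformal_coverage}, so $T_n\circ\tau=T_n$, while $\tau$ fails to fix the first-coordinate projection $y_{1:n}\mapsto y_1$ on the positive-measure set where the two transposed entries differ. For split conformal with absolute-residual scores, the output depends on the history only through $(\widehat f,\widehat q)$, where $\widehat f$ is a function of the training indices $\mathcal{I}_1$ alone and $\widehat q$ is the empirical quantile of $\{|Y_i-\widehat f(X_i)|\}_{i\in\mathcal{I}_2}$. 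Here I would take $\tau$ to reflect a single calibration residual across the fit, sending $Y_j\mapsto 2\widehat f(X_j)-Y_j$ for one fixed $j\in\mathcal{I}_2$ and fixing all other coordinates: this preserves every $|Y_i-\widehat f(X_i)|$, hence fixes $\widehat q$, and leaves $\mathcal{I}_1$ untouched so $\widehat f$ is unchanged, yet moves coordinate $j$ wherever $Y_j\neq\widehat f(X_j)$, a $\lambda_n$-positive event for continuous $\mathcal{Y}$. (In the covariate setting the same two transformations act verbatim on $\mathcal{Z}^n$ after replacing $\mathcal{Y}$ by $\mathcal{Z}=\mathcal{X}\times\mathcal{Y}$, as in Remark~\ref{rem:rank_story}.)

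Finally I would record the measure-theoretic strengthening consumed downstream: in each case $\tau$ acts nontrivially on a set of positive $\lambda_n$-measure (using non-atomicity and full support of $\lambda_n$), so the invariant subspace $L^2(\lambda_n;\mathcal{G}_n)$ is a \emph{proper} closed subspace of $L^2(\lambda_n)$, which is precisely the hypothesis that drives Theorem~\ref{thm:meagre}. The main obstacle is not any single verification but the deliberate informality of ``the objects the mechanism uses'': because the claim is only that properness holds \emph{typically}, the argument must be organized as a checklist over named mechanisms, and for each one I must confirm that the summary actually entering the acceptance rule is invariant under the proposed $\tau$. The one genuinely delicate case is full conformal that refits on the augmented sample, where I would have to verify that the acceptance rule consumes only the \emph{rank} of the test score among the recomputed scores, not the raw score values; once that reduction is made explicit, permutation-invariance of ranks closes the argument exactly as in the coverage proof.
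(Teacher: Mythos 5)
Your proposal is correct and follows essentially the same route as the paper's (one-line) proof: both rest on the non-injectivity of the history-to-calibration-summary map, which forces every $\mathcal{G}_n$-measurable function to be constant on its fibers while a coordinate projection is not. Your version simply makes the paper's ``discards labels and/or metric information'' explicit by exhibiting the transposition and residual-reflection as concrete summary-preserving, non-identity transformations, which is a useful (and compatible) sharpening rather than a different argument.
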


\begin{proof}
For rank-based or score-order-statistic mechanisms, the map
$y_{1:n}\mapsto\text{(calibration summary)}$ is not injective on any set of
positive $\lambda_n$-measure; it discards labels and/or metric information.
Hence the generated $\sigma$-field cannot equal the full Borel $\sigma$-field.
\end{proof}

\begin{definition}[$\mathcal{G}_n$-measurable kernels]
\label{def:gn_measurable_kernel}
A kernel $\kappa_n\in\mathcal{K}_n$ is \emph{$\mathcal{G}_n$-measurable} if for
every bounded measurable $\varphi:\mathcal{Y}\to\mathbb{R}$ the map
$
y_{1:n}\mapsto\int \varphi(y)\,\kappa_n(dy\mid y_{1:n})
$
is $\mathcal{G}_n$-measurable.
\end{definition}

\begin{lemma}[$\mathcal{G}_n$-measurable kernels are nowhere dense]
\label{lem:gn_nowhere_dense}
If $\mathcal{G}_n\subsetneq\mathcal{B}(\mathcal{Y}^n)$ is proper, then the set
\[
\mathrm{GM}_n:=\{\kappa_n\in\mathcal{K}_n:\kappa_n\text{ is }\mathcal{G}_n\text{-measurable}\}
\]
is closed and nowhere dense in $(\mathcal{K}_n,d_n)$.
\end{lemma}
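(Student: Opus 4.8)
The plan is to prove the two halves separately---closedness and empty interior---and then invoke that a closed set with empty interior is nowhere dense. Throughout I would work with the coordinate description of the metric: $d_n(\kappa_n,\kappa_n')\to 0$ iff $\|f_{n,j}-f'_{n,j}\|_{L^2(\lambda_n)}\to 0$ for every $j$, where $f_{n,j}(y_{1:n})=\int\varphi_j\,\d\kappa_n(\cdot\mid y_{1:n})$. First I would record the equivalence underlying $\mathrm{GM}_n$: a kernel is $\mathcal{G}_n$-measurable in the sense of Definition~\ref{def:gn_measurable_kernel} iff every coordinate $f_{n,j}$ lies in the closed subspace $L^2(\lambda_n;\mathcal{G}_n)$ of $\mathcal{G}_n$-measurable square-integrable functions. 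The forward implication is immediate; for the converse, because $\{\varphi_j\}$ is convergence-determining, $\mathcal{G}_n$-measurability of all maps $y_{1:n}\mapsto\int\varphi_j\,\d\kappa_n$ makes $y_{1:n}\mapsto\kappa_n(\cdot\mid y_{1:n})$ a $\mathcal{G}_n$-measurable map into $(\mathcal{P}(\mathcal{Y}),\text{weak})$, and composition with the Borel map $\mu\mapsto\int\varphi\,\d\mu$ then yields $\mathcal{G}_n$-measurability for every bounded measurable $\varphi$. With this identification $\mathrm{GM}_n=\{\kappa_n:f_{n,j}\in L^2(\lambda_n;\mathcal{G}_n)\text{ for all }j\}$, and closedness is immediate: a $d_n$-limit forces $L^2$-convergence of each coordinate, and $L^2(\lambda_n;\mathcal{G}_n)$ is $L^2$-closed, so the limit coordinates stay $\mathcal{G}_n$-measurable.

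For empty interior I would first reduce the $\sigma$-field hypothesis to its functional-analytic form: properness of $\mathcal{G}_n$ (modulo $\lambda_n$-null sets, as supplied by Lemma~\ref{lem:proper_calibration}) means $L^2(\lambda_n;\mathcal{G}_n)$ is a proper closed subspace of $L^2(\lambda_n)$, equivalently there is a bounded $g:\mathcal{Y}^n\to[0,1]$ not $\lambda_n$-a.e.\ equal to any $\mathcal{G}_n$-measurable function. Then, given any $\kappa_n\in\mathrm{GM}_n$ and $\eps>0$, I would produce a nearby kernel outside $\mathrm{GM}_n$ by a mixture perturbation. Since $\{\varphi_j\}$ determines weak convergence, some $\varphi_j$ separates two measures; relabel it $j=1$ and pick $\nu_0,\nu_1\in\mathcal{P}(\mathcal{Y})$ with $a_0:=\int\varphi_1\,\d\nu_0\neq\int\varphi_1\,\d\nu_1=:a_1$, and for small $\delta>0$ set
\[
\kappa_n'(\cdot\mid y_{1:n})=(1-\delta)\,\kappa_n(\cdot\mid y_{1:n})+\delta\bigl[(1-g(y_{1:n}))\,\nu_0+g(y_{1:n})\,\nu_1\bigr].
\]
This is a valid probability kernel with measurable mixing weights. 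Its first coordinate is $f'_{n,1}=(1-\delta)f_{n,1}+\delta a_0+\delta(a_1-a_0)\,g$; the first two terms are $\mathcal{G}_n$-measurable, while the last is a nonzero multiple of $g$, so if $f'_{n,1}$ were $\mathcal{G}_n$-measurable then $g$ would be too, a contradiction---hence $\kappa_n'\notin\mathrm{GM}_n$. For the distance, each coordinate obeys $|f'_{n,j}-f_{n,j}|\le 2\delta\,\|\varphi_j\|_\infty\le 2\delta$ pointwise, so $d_n(\kappa_n,\kappa_n')\le 2\delta<\eps$ for $\delta$ small. Every $d_n$-ball about a point of $\mathrm{GM}_n$ thus meets the complement, giving empty interior, and together with closedness this yields nowhere density.

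The hard part will not be the topology but the realizability constraint: a proper closed subspace of $L^2(\lambda_n)$ trivially has empty interior, yet an arbitrary $L^2$-perturbation of a coordinate need not arise from any probability kernel, so one cannot simply transport the subspace's empty interior to $\mathrm{GM}_n$. The mixture construction is precisely what keeps the perturbed object inside $\mathcal{K}_n$ while breaking measurability, and the single point to watch is that the non-$\mathcal{G}_n$-measurable factor $g$ survives the mixing regardless of the starting kernel $\kappa_n$---which is guaranteed by the constant gap $a_1\neq a_0$ rather than by any property of $f_{n,1}$. A minor subtlety I would address in passing is that all measurability statements are understood modulo $\lambda_n$-null sets, consistent with the identification of $\mathcal{K}_n$ with $\lambda_n$-a.e.\ equivalence classes of kernels implicit in the metric $d_n$.
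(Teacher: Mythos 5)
Your proof is correct and follows the same strategy as the paper's: closedness via the $L^2$-closedness of the subspace of $\mathcal{G}_n$-measurable functions, and empty interior via a small perturbation in a non-$\mathcal{G}_n$-measurable direction. The one place you go beyond the paper is exactly the point you flag as the hard part: the paper's proof asserts that ``small perturbations of a kernel in directions not measurable w.r.t.\ $\mathcal{G}_n$ exit $\mathrm{GM}_n$'' without exhibiting such a perturbation inside the space of probability kernels, whereas your mixture $\kappa_n'=(1-\delta)\kappa_n+\delta[(1-g)\nu_0+g\nu_1]$ realizes it explicitly and verifies both the measurability violation (via the constant gap $a_1\neq a_0$) and the $d_n$-distance bound. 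Your remark that properness of $\mathcal{G}_n$ must be read modulo $\lambda_n$-null sets for $L^2(\lambda_n;\mathcal{G}_n)$ to be a proper subspace is also a legitimate precision that the paper leaves implicit.
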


\begin{proof}
Closedness follows because $\mathcal{G}_n$-measurable $L^2(\lambda_n)$ functions
form a closed subspace of $L^2(\lambda_n)$; the coordinate functions defining
$d_n$ converge in $L^2$ and therefore preserve $\mathcal{G}_n$-measurability in
the limit.  Nowhere denseness follows because the subspace of
$\mathcal{G}_n$-measurable $L^2$ functions is proper; hence it has empty interior
in $L^2(\lambda_n)$, and small perturbations of a kernel in directions not
measurable w.r.t.\ $\mathcal{G}_n$ exit $\mathrm{GM}_n$.
\end{proof}

\begin{theorem}[Meagre intersection: kernelized conformal vs.\ Bayesian kernels]
\label{thm:meagre_app}
Assume $\mathcal{G}_n\subsetneq \mathcal{B}(\mathcal{Y}^n)$ is proper for some
stage $n$ of a conformal mechanism.  Let $\mathcal{C}\subset\mathcal{K}$ be the
class of \emph{kernelized conformal} sequential kernels produced by that
mechanism, and let $\mathcal{B}\subset\mathcal{K}$ be the class of Bayesian
predictive kernels (regular conditionals under a $\sigma$-additive law).
Then $\mathcal{B}\cap\mathcal{C}$ is meagre in $(\mathcal{K},d)$.
\end{theorem}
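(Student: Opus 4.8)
The plan is to reduce the claim to the single-stage nowhere-density already isolated in Lemma~\ref{lem:gn_nowhere_dense} and then transport that smallness to the product space $(\mathcal{K},d)$ through a coordinate projection. First I would fix a stage $n$ for which $\mathcal{G}_n\subsetneq\mathcal{B}(\mathcal{Y}^n)$, as granted by hypothesis (and as typically holds by Lemma~\ref{lem:proper_calibration}), and invoke Lemma~\ref{lem:gn_nowhere_dense} to conclude that the set $\mathrm{GM}_n$ of $\mathcal{G}_n$-measurable kernels is closed and nowhere dense in $(\mathcal{K}_n,d_n)$. This is the only place where properness of the calibration $\sigma$-field is used, and it carries the geometric content: a proper sub-$\sigma$-field yields a proper closed subspace $L^2(\lambda_n;\mathcal{G}_n)\subsetneq L^2(\lambda_n)$ with empty $L^2$-interior, and the coordinate metric $d_n$ inherits this empty interior.

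The second step is to show that the fixed conformal mechanism factors through its calibration summary, so that $\mathcal{C}$ projects into $\mathrm{GM}_n$. By Definition~\ref{def:calibration_sigma}, the stage-$n$ output kernel is computed from $\mathcal{G}_n$ alone (score order statistics, fitted state, tie-break randomness); hence for each kernel in $\mathcal{C}$ every coordinate map $y_{1:n}\mapsto\int\varphi_j(y)\,\kappa_n(dy\mid y_{1:n})$ is $\mathcal{G}_n$-measurable. Writing $\pi_n:\mathcal{K}\to\mathcal{K}_n$ for the coordinate projection, this gives $\pi_n(\mathcal{C})\subseteq\mathrm{GM}_n$, equivalently $\mathcal{C}\subseteq\pi_n^{-1}(\mathrm{GM}_n)$.

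Third, I would transport nowhere-density across $\pi_n$. The projection is continuous and open for the product metric $d$, so $\pi_n^{-1}(\mathrm{GM}_n)$ is closed (preimage of a closed set), and it has empty interior: a nonempty open $U\subseteq\pi_n^{-1}(\mathrm{GM}_n)$ would force $\pi_n(U)$ to be a nonempty open subset of $\mathrm{GM}_n$, contradicting that $\mathrm{GM}_n$ is nowhere dense. Thus $\pi_n^{-1}(\mathrm{GM}_n)$ is nowhere dense in the Baire space $(\mathcal{K},d)$ (Lemma~\ref{lem:kernel_baire}), and $\mathcal{B}\cap\mathcal{C}\subseteq\mathcal{C}\subseteq\pi_n^{-1}(\mathrm{GM}_n)$ is contained in a nowhere dense set, hence meagre. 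I would remark that the argument in fact shows $\mathcal{C}$ itself is nowhere dense; restricting to the coincidence set $\mathcal{B}\cap\mathcal{C}$ is simply what yields the intended reading that ``kernelized CP equals Bayes'' is topologically exceptional.

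The main obstacle is not the topology---nowhere-density, openness of coordinate projections, and Baire category are routine---but the two structural inputs feeding the first two steps: verifying for a concrete mechanism that $\mathcal{G}_n$ is genuinely proper (Lemma~\ref{lem:proper_calibration}), i.e.\ that the score/rank/fitted-state map is non-injective on a set of positive $\lambda_n$-measure, and that the kernelization convention reads only off this coarsened summary, so that conformal kernels are $\mathcal{G}_n$-measurable. Both are definitional once the calibration $\sigma$-field is set up correctly, but they are where the ``information is discarded'' content actually resides and must be checked per mechanism (split, full, CQR).
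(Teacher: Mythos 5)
Your proposal is correct and follows essentially the same route as the paper's proof: invoke Lemma~\ref{lem:gn_nowhere_dense} for nowhere-density of $\mathrm{GM}_n$ at a stage where $\mathcal{G}_n$ is proper, note $\mathcal{C}\subseteq\pi_n^{-1}(\mathrm{GM}_n)$ by $\mathcal{G}_n$-measurability of the kernelized conformal outputs, and pull nowhere-density back through the continuous open coordinate projection. Your added observation that $\mathcal{C}$ itself is nowhere dense, and your spelled-out openness argument, are consistent elaborations of the paper's more terse presentation.
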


\begin{proof}
By construction, the stage-$n$ marginal $\pi_n(\mathcal{C})$ is contained in
$\mathrm{GM}_n$ (Definition~\ref{def:gn_measurable_kernel}), so
$\mathcal{C}\subseteq \pi_n^{-1}(\mathrm{GM}_n)$.  Since $\pi_n$ is continuous and
$\mathrm{GM}_n$ is nowhere dense, $\pi_n^{-1}(\mathrm{GM}_n)$ is nowhere dense in
$\mathcal{K}$ (product projections are open maps).  Hence $\mathcal{C}$ is
contained in a nowhere dense set, and so is $\mathcal{B}\cap\mathcal{C}$.
\end{proof}

\begin{lemma}[Meagre sets are stable under countable unions and continuous pullbacks]
\label{lem:meagre_stability}
In a Baire space, the collection of meagre sets is a $\sigma$-ideal: it is
closed under subsets and countable unions.  Moreover, if $f:X\to X$ is
continuous and $M\subset X$ is meagre, then $f^{-1}(M)$ is meagre.
\end{lemma}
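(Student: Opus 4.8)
The plan is to separate the two assertions: the $\sigma$-ideal claims are essentially set-theoretic and follow directly from the definition of meagreness as a countable union of nowhere dense sets, while the pullback claim is where the topology does the work and which I would reduce to a single statement about nowhere dense sets. For closure under subsets, if $A\subseteq M$ and $M=\bigcup_k N_k$ with each $N_k$ nowhere dense, then $A=\bigcup_k(A\cap N_k)$ and each $A\cap N_k\subseteq N_k$ is nowhere dense (the closure of a subset inherits empty interior), so $A$ is meagre. For closure under countable unions, if $M_i=\bigcup_k N_{i,k}$ are meagre, the double-indexed family $\{N_{i,k}\}_{i,k}$ is countable and nowhere dense termwise, so $\bigcup_i M_i=\bigcup_{i,k}N_{i,k}$ is meagre. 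I would dispatch both in one short paragraph.

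For the pullback, I would use the $\sigma$-ideal part just proved to reduce to the generating case: writing a meagre $M=\bigcup_k N_k$, one has $f^{-1}(M)=\bigcup_k f^{-1}(N_k)$, so it suffices to show that $f^{-1}(N)$ is nowhere dense whenever $N$ is. Continuity supplies the containment $\overline{f^{-1}(N)}\subseteq f^{-1}(\overline N)$, since $f^{-1}(\overline N)$ is a closed set containing $f^{-1}(N)$. The claim is thereby reduced further to showing that the closed set $f^{-1}(\overline N)$ has empty interior.

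The emptiness of that interior is the main obstacle and the one genuinely topological step. Here I would use the operative property of the map, namely that $f$ is open, which is exactly the feature possessed by the coordinate projections to which the lemma is applied in Theorem~\ref{thm:meagre_app}. The argument is a short contradiction: if $U:=\operatorname{int}f^{-1}(\overline N)$ were nonempty, then $f(U)$ would be a nonempty open set (by openness of $f$) contained in $f(f^{-1}(\overline N))\subseteq\overline N$, contradicting that $\overline N$ has empty interior. Hence $U=\varnothing$, so $\overline{f^{-1}(N)}$ has empty interior and $f^{-1}(N)$ is nowhere dense. Feeding this nowhere-dense transfer back through the countable-union reduction of the first part yields that $f^{-1}(M)=\bigcup_k f^{-1}(N_k)$ is meagre, closing the loop with the $\sigma$-ideal structure and completing the proof.
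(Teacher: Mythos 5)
Your proof is correct, and it does more than the paper does: the paper's ``proof'' of Lemma~\ref{lem:meagre_stability} is only a citation to Kechris, whereas you supply the actual argument. Your handling of the $\sigma$-ideal claims is the standard one and is fine. The more important point is your treatment of the pullback claim, where you have in effect caught an error in the statement itself. As written, the lemma asserts that $f^{-1}(M)$ is meagre for any \emph{continuous} $f:X\to X$, and this is false: take $f$ constant equal to $c$ on $X=\mathbb{R}$; then $\{c\}$ is nowhere dense, but $f^{-1}(\{c\})=\mathbb{R}$ is nonmeagre since $\mathbb{R}$ is Baire. Continuity gives only $\overline{f^{-1}(N)}\subseteq f^{-1}(\overline{N})$ and says nothing about the interior of the latter set. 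You correctly identify that the operative hypothesis is that $f$ be \emph{open} (in addition to continuous), and your contradiction argument --- a nonempty open $U\subseteq f^{-1}(\overline{N})$ would push forward to a nonempty open subset of $\overline{N}$ --- is exactly the right way to close that gap. This is also consistent with how the lemma is actually deployed: the proof of Theorem~\ref{thm:meagre_app} invokes it only for coordinate projections and parenthetically notes that ``product projections are open maps,'' so the application is unaffected, but the lemma's hypothesis should be amended from ``continuous'' to ``continuous and open.'' In short: your proof is valid for the corrected statement, and the correction is needed.
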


\begin{proof}
Standard Baire category facts; see \citet[Ch.~8]{kechris1995}.
\end{proof}

\begin{remark}[Why ``ex-post patches'' do not generically fix the coincidence]
Theorem~\ref{thm:meagre_app} formalizes a structural rarity claim about the
\emph{set of kernels} that are simultaneously conformal-mechanism-measurable and
Bayesian.  Any post-processing that preserves the conformal mechanism's
information restriction (i.e., remains $\mathcal{G}_n$-measurable at the relevant
stages) stays inside the same nowhere-dense constraint sets $\mathrm{GM}_n$.
Thus, within the natural class of ``patches'' that do not introduce additional
data beyond the conformal summary, the coincidence regime remains meagre.
\end{remark}

\subsection{Positive deficiency and information loss under rank/proxy reductions}
\label{app:deficiency}

\begin{definition}[Statistical experiment]
\label{def:experiment}
A \emph{statistical experiment} is a triple
$\mathcal{E}=(\mathcal{X},\mathcal{A},\{P_\theta:\theta\in\Theta\})$.
A (randomized) decision rule is a Markov kernel $\rho:\mathcal{X}\to\mathcal{P}(A)$.
\end{definition}

\begin{definition}[Deficiency]
\label{def:deficiency_app}
Let $\mathcal{E}$ and $\mathcal{F}$ be experiments indexed by the same $\Theta$.
The \emph{deficiency} $\delta(\mathcal{F},\mathcal{E})$ is the infimum of
$\epsilon\ge 0$ such that for every finite action set $A$, every bounded loss
family $L_\theta:A\to[0,1]$, and every decision rule $\rho$ for $\mathcal{E}$,
there exists a decision rule $\sigma$ for $\mathcal{F}$ with
\[
\sup_{\theta\in\Theta}\Big(R_\theta(\sigma;\mathcal{F})-R_\theta(\rho;\mathcal{E})\Big)
\le \epsilon,
\quad R_\theta(\rho;\mathcal{E}) := \int L_\theta(a)\,\rho(da\mid x)\,P_\theta(dx).
\]
\end{definition}

\begin{theorem}[Blackwell sufficiency and deficiency]
\label{thm:blackwell_app}
Let $\mathcal{E}=(\mathcal{X},\mathcal{A},\{P_\theta\})$ and let $T:\mathcal{X}\to\mathcal{T}$
be a statistic with induced experiment $\mathcal{E}_T=(\mathcal{T},\mathcal{B},\{P_\theta\circ T^{-1}\})$.
Then $\delta(\mathcal{E}_T,\mathcal{E})=0$ if and only if $T$ is sufficient for
$\{P_\theta\}$.  If $T$ is not sufficient, then $\delta(\mathcal{E}_T,\mathcal{E})>0$.
\end{theorem}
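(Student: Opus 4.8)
The plan is to prove the biconditional; the final ``not sufficient $\Rightarrow$ positive deficiency'' clause is then its contrapositive. First I would record the free direction: because $T$ is a deterministic function of $X$, any rule based on $\mathcal E_T$ is already a rule based on $\mathcal E$ (apply it to $T(X)$), so $\delta(\mathcal E,\mathcal E_T)=0$ holds automatically and the entire content lies in when the \emph{reverse} comparison $\delta(\mathcal E_T,\mathcal E)=0$ holds. The organizing tool is Le~Cam's randomization (Blackwell--Sherman--Stein) criterion \citep{blackwell1951comparison,lecam1986asymptotic}: on standard Borel spaces, $\delta(\mathcal E_T,\mathcal E)=0$ holds iff there is a Markov kernel $K:\mathcal T\to\mathcal P(\mathcal X)$ with $\int_{\mathcal T}K(\cdot\mid t)\,(P_\theta\circ T^{-1})(dt)=P_\theta$ for every $\theta$. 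The theorem therefore reduces to: such a marginal-recovering kernel exists iff $T$ is sufficient.

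For the ``if'' direction I would argue constructively. If $T$ is sufficient, then by definition the conditional distribution of $X$ given $T$ has a $\theta$-free version, and Lemma~\ref{lem:disintegration} supplies a regular conditional kernel $K(\cdot\mid t)$ with $P_\theta=\int K(\cdot\mid t)\,(P_\theta\circ T^{-1})(dt)$ for all $\theta$. Given any decision rule $\rho$ for $\mathcal E$, define $\sigma(\cdot\mid t):=\int\rho(\cdot\mid x)\,K(dx\mid t)$, a legitimate rule for $\mathcal E_T$. Disintegrating the risk integral through $K$ shows $R_\theta(\sigma;\mathcal E_T)=R_\theta(\rho;\mathcal E)$ exactly, for every $\theta$, every bounded loss, and every $\rho$; hence $\delta(\mathcal E_T,\mathcal E)=0$. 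This step is routine once the $\theta$-free conditional is in hand.

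For the ``only if'' direction I would pass through a posterior characterization, which localizes the argument. It suffices to treat a finite parameter subset $\{\theta_0,\dots,\theta_r\}\subseteq\Theta$ on which non-sufficiency already manifests: if $T$ fails to be sufficient for the whole family it fails on some finite subfamily, since by the Halmos--Savage factorization criterion some pairwise likelihood ratio is not $\sigma(T)$-measurable. Fix a fully supported prior $\pi$ on this subset. Sufficiency of $T$ is equivalent to the posterior $\pi(\cdot\mid X)$ being ($\pi$-a.s.) a function of $T(X)$; when $T$ is not sufficient, $\pi(\cdot\mid X)$ and $\pi(\cdot\mid T)$ disagree on a set of positive probability. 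I would then exhibit a witnessing decision problem using a bounded \emph{strictly proper} scoring rule (a Brier score on the finite simplex over $\{\theta_0,\dots,\theta_r\}$, discretized to a finite action grid fine enough to preserve the gap, so that Definition~\ref{def:deficiency_app}'s finite action sets still apply). The Bayes-optimal action reports the posterior, so the full experiment attains expected score driven by $\pi(\cdot\mid X)$ while any $\mathcal E_T$-measurable rule is driven by $\pi(\cdot\mid T)$; strict propriety converts the positive-probability disagreement into a strict gap in Bayes risk, hence in $\theta$-maximal risk, and therefore $\delta(\mathcal E_T,\mathcal E)>0$.

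The hard part is precisely this last construction: upgrading ``$T$ is not sufficient'' into a concrete bounded-loss problem that $\mathcal E_T$ cannot match. The tempting shortcut is binary hypothesis testing, but the total-variation/data-processing gap only detects whether the \emph{sign} of the likelihood-ratio increment is $\sigma(T)$-measurable, which is strictly weaker than sufficiency; a family in which only the \emph{magnitude} of the ratio is lost would exhibit no testing gap yet remain nonsufficient. The strictly-proper-scoring witness is what makes the argument sensitive to the full likelihood ratio rather than merely its sign, and justifying the finite-action discretization is the main technical care required. As a cross-check I would note the purely kernel-theoretic route: a marginal-recovering kernel $K$ yields a $\sigma(T)$-factoring Markov kernel on $\mathcal X$ preserving every $P_\theta$, and the Halmos--Savage factorization theorem then forces the likelihood ratios to be $\sigma(T)$-measurable, i.e.\ $T$ sufficient, closing the equivalence from the other side.
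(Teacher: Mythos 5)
Your proposal is correct, and it is considerably more than the paper provides: the paper's ``proof'' of this theorem is a one-line appeal to the Blackwell--Sherman--Stein randomization criterion with citations to \citet{blackwell1951comparison}, Le~Cam, and Torgersen, whereas you actually carry out both directions. Your easy direction (sufficiency $\Rightarrow$ a $\theta$-free disintegration kernel $K$, then composing any rule $\rho$ with $K$ and unwinding the risk integral to get exact risk matching) is the standard reverse-garbling argument the paper alludes to. Your hard direction is a genuine addition: reducing non-sufficiency to a finite subfamily via Halmos--Savage, passing to the posterior characterization, and witnessing the gap with a discretized bounded strictly proper scoring rule is a clean, self-contained construction, and your observation that a single binary test (total variation) would \emph{not} suffice --- because Jensen's inequality for $|\mathbb{E}[L\mid T]-1|$ only detects whether the \emph{sign} of $L-1$ is $\sigma(T)$-measurable --- correctly identifies the pitfall that makes the proper-scoring witness necessary. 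Two small points to make explicit: (i) the reduction ``not sufficient for the family $\Rightarrow$ not sufficient for some pair'' and the equivalence ``zero deficiency $\Leftrightarrow$ existence of a marginal-recovering kernel'' both require the family to be dominated (and the spaces standard Borel); the appendix statement omits this hypothesis, though the main-text version (Theorem~\ref{thm:lecam}) and the paper's running examples assume it, so you should state it as a standing assumption rather than leave it implicit. (ii) Your closing ``cross-check'' (marginal-recovering kernel $\Rightarrow$ likelihood ratios are $\sigma(T)$-measurable) is the genuinely nontrivial half of the randomization criterion and is asserted rather than proved; since your scoring-rule argument already closes the biconditional, this is harmless, but it should be labeled as redundant rather than as an independent route.
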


\begin{proof}
This is standard in Blackwell's comparison theory: $T$ is sufficient iff the
full experiment can be simulated from $T$ via a Markov kernel (a ``reverse
garbling''), which is equivalent to zero deficiency.  See
\citet{blackwell1951comparison,lecam1986asymptotic,torgersen1991comparison}.
\end{proof}

\begin{lemma}[Ranks are ancillary for location models]
\label{lem:ranks_ancillary_location}
Let $Y_1,\dots,Y_n$ be i.i.d.\ with $Y_i=\theta+\varepsilon_i$ where $\varepsilon_i$
has a continuous distribution that does not depend on $\theta$.  Let
$T(Y_{1:n})$ be the vector of within-sample ranks.  Then $T(Y_{1:n})$ has a
distribution that does \emph{not} depend on $\theta$.
\end{lemma}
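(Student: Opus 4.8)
The plan is to exploit the fact that the rank functional is invariant under the common-shift group action that generates the location family. The entire argument reduces to a single equivariance identity combined with the $\theta$-freeness of the error law, so I expect no serious obstacle; the only point requiring care is the measurable well-definedness of the rank vector, which the continuity hypothesis handles.

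First I would record the translation-invariance of ranks. For any $c \in \mathbb{R}$ and any $y_{1:n} \in \mathbb{R}^n$, the strict inequalities $y_i < y_j$ are preserved under the common shift $y_k \mapsto y_k + c$, since $y_i < y_j \iff y_i + c < y_j + c$. Because $T$ is defined purely through these pairwise comparisons, we have $T(y_1 + c, \dots, y_n + c) = T(y_1, \dots, y_n)$ for every $c$ and every argument. The continuity hypothesis on the error distribution enters precisely here: it guarantees that each tie event $\{\varepsilon_i = \varepsilon_j\}$ with $i \ne j$ has probability zero, so that $T$ is almost surely a well-defined permutation of $\{1, \dots, n\}$ and no tie-breaking convention is required (in a general setting any translation-invariant tie-break would preserve the argument).

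Next I would apply this identity pathwise with $c = \theta$. Writing $Y_k = \theta + \varepsilon_k$, we obtain
\[
T(Y_1, \dots, Y_n) = T(\theta + \varepsilon_1, \dots, \theta + \varepsilon_n) = T(\varepsilon_1, \dots, \varepsilon_n)
\]
as an almost-sure identity of random vectors. Thus the rank statistic of the observations coincides, outside a $\theta$-independent null set, with the rank statistic of the errors alone.

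Finally I would transport laws. By hypothesis the joint law of $(\varepsilon_1, \dots, \varepsilon_n)$ does not depend on $\theta$; hence neither does the law of its measurable image $T(\varepsilon_{1:n})$. Since this image equals $T(Y_{1:n})$ almost surely, the distribution of $T(Y_{1:n})$ is free of $\theta$, which is exactly the ancillarity claim. The result is therefore a direct consequence of equivariance under the location group, and this is precisely the structural reason that a rank-based reduction discards the location-relevant information whose loss drives the positive-deficiency conclusion of Theorem~\ref{thm:lecam}.
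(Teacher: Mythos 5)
Your proof is correct and follows essentially the same route as the paper's: translation-invariance of the rank vector under a common shift, combined with the $\theta$-freeness of the error law; the continuity hypothesis is used, as in the paper, only to rule out ties almost surely. Your version is simply a more explicit write-up of the same two-line argument.
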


\begin{proof}
Adding a constant $\theta$ to every coordinate preserves the strict order and
therefore preserves the rank vector.  Since the noise law is $\theta$-invariant,
the induced law of ranks is $\theta$-invariant.
\end{proof}

\begin{example}[A concrete deficiency lower bound for rank reductions]
\label{ex:deficiency_bound}
Consider the Gaussian location experiment
$\mathcal{E}=(\mathbb{R}^n,\mathcal{B},\{N(\theta,1)^{\otimes n}:\theta\in\{-a,+a\}\})$
for fixed $a>0$, and let $\mathcal{E}_T$ be the experiment generated by within-sample
ranks $T$.
By Lemma~\ref{lem:ranks_ancillary_location}, the two induced rank laws coincide,
so $\mathcal{E}_T$ is \emph{uninformative} about $\theta$ (total variation $=0$).
But the full experiment has total variation distance
$
\mathrm{TV}(N(-a,1)^{\otimes n},N(+a,1)^{\otimes n})>0
$
and therefore admits tests with nontrivial power.
For the 0--1 testing loss, the minimax risk is $(1-\mathrm{TV})/2$; hence any
rank-based rule has minimax risk $1/2$ while the full experiment has minimax
risk $<1/2$.
This implies a strictly positive deficiency:
\[
\delta(\mathcal{E}_T,\mathcal{E}) \;\ge\;
\frac{1}{2}\,\mathrm{TV}\!\left(N(-a,1)^{\otimes n},\,N(+a,1)^{\otimes n}\right)
\;>\;0.
\]
The conclusion is qualitative but sharp: discarding metric information can
create decision-theoretic gaps even when marginal calibration constraints remain
satisfied.
\end{example}

\subsection{What we do not do: nonconstructive ``repairs''}
\label{app:what_we_do_not_do}

\subsubsection*{No ultrafilter or maximal extensions}
One can extend finitely additive probabilities to larger algebras (or build
``full'' conditional probabilities) using maximal ideals/ultrafilters, typically
via Zorn's lemma.  Such extensions are nonconstructive and, crucially, they do
not repair the coherence defects identified here: Lane--Sudderth type
nonconglomerability persists, and the resulting objects are not algorithmically
meaningful for statistical learning.

\subsubsection*{No nonstandard-analysis repairs}
Nonstandard frameworks can reinterpret certain pathologies via transfer
principles, but the separations in this paper are stated and proved in standard
measure theory on standard Borel spaces, the framework in which statistical
procedures are implemented, audited, and transported across workflows.

\subsubsection*{The impossibility is the conclusion}
The point of Theorems~\ref{thm:lane_sudderth} and \ref{thm:meagre_app} is not that
rank-calibrated procedures are ``broken'' and must be repaired.  The point is
that rank-calibrated prediction and $\sigma$-additive belief updating are
\emph{different mathematical objects} with different composition laws.  When CP
or PPI is used as a final-stage validity guardrail, this distinction is benign;
when these objects are treated as transportable belief states inside multi-stage
pipelines, the distinction becomes operationally consequential.

\subsection{Supplementary counterexamples (organized by diagnostic)}
\label{app:counterexamples}

We collect small examples that make the three diagnostics in the main text
concrete:
(i) \emph{transport / extensionality} (dependence on $P(X)$ even when $P(Y\mid X)$
is fixed),
(ii) \emph{sequential updateability} (absence of a canonical conditioning rule
without an embedding $\sigma$-additive kernel), and
(iii) \emph{experiment/decision content} (loss of decision-relevant information
under rank/proxy reductions in the Blackwell/Le Cam sense).

\begin{example}[Conformalized quantile regression under heteroskedasticity]
\label{ex:cqr_hetero}
Consider conformalized quantile regression (CQR) with target coverage $1-\alpha$.
Let $\mathcal{X}=[0,1]$, $\mathcal{Y}=\mathbb{R}$, and
\[
Y = X + \sigma(X)\,\varepsilon,\qquad \varepsilon\sim N(0,1),
\qquad \sigma(x)=0.5+x .
\]

\textit{Setup.}
Fit quantile regression estimates $\widehat q_{\alpha/2}(x)$ and
$\widehat q_{1-\alpha/2}(x)$ on training data. Define the standard CQR scores
on calibration data,
\[
S_i=\max\{\widehat q_{\alpha/2}(X_i)-Y_i,\; Y_i-\widehat q_{1-\alpha/2}(X_i)\}.
\]

\textit{Mechanism (transport / extensionality).}
Even when the conditional law $P(Y\mid X)$ is held fixed, the \emph{calibration
score mixture} depends on the covariate marginal:
\[
\mathrm{Law}(S)=\int \mathrm{Law}(S\mid X=x)\,P_X(dx).
\]
Thus two joint laws with identical $P(Y\mid X)$ but different $P(X)$ can produce
different calibration quantiles and therefore different prediction sets.

\textit{Concrete instantiation.}
Let $P_1$ have $X\sim\mathrm{Unif}[0,1]$ and $P_2$ have $X\sim\mathrm{Beta}(5,1)$,
with the same conditional model $Y\mid X=x\sim N(x,\sigma(x)^2)$ under both.
Then $\sigma(X)$ is reweighted toward high-variance regions under $P_2$, shifting
the $(1-\alpha)$ calibration quantile upward and widening CQR intervals.
\end{example}

\begin{example}[Sequential composition: a set-valued guarantee is not a kernel]
\label{ex:sequential_composition}
Consider a two-stage pipeline in which a conformal set is treated as an
updateable belief object.

\textit{Setup.}
Stage 1: from data $(X_1,Y_1),\ldots,(X_n,Y_n)$ construct a conformal set
$C_n(X_{n+1})$ for $Y_{n+1}$.
Stage 2: after observing $C_n(X_{n+1})$, the agent receives additional side
information $Z$ (e.g., a stratum label, a sensor readout, or an adaptive
measurement) before taking an action tied to an event $A\subseteq\mathcal{Y}$.

\textit{Coherent benchmark.}
A $\sigma$-additive predictive kernel supports refinement via
\[
P(Y_{n+1}\in A \mid \mathcal{H}_n, Z),
\]
where $\mathcal{H}_n$ is the history $\sigma$-field generated by the data.

\textit{Obstruction (sequential updateability).}
A conformal set $C_n(X_{n+1})$ is natively an \emph{accept/reject} object built
from a rank comparison against a calibration summary. There is no canonical
rule that maps $(C_n,Z)$ to a conditional probability of $A$ unless one first
specifies an \emph{embedding} joint law (equivalently, a $\sigma$-additive kernel
realizing the conformal mechanism).

\textit{Consequence.}
In multi-stage pipelines, treating $C_n$ as if it were a belief state forces ad
hoc ``updates'' after $Z$ is revealed; these need not preserve the intended
guarantee and can induce decision regret relative to procedures that condition
under a genuine predictive kernel.
\end{example}

\begin{example}[Rank sufficiency fails in exponential families]
\label{ex:exp_family}
Let $Y_1,\ldots,Y_n\stackrel{iid}{\sim}\mathrm{Exp}(\lambda)$ with density
$f(y;\lambda)=\lambda e^{-\lambda y}\mathbf{1}_{y>0}$.

\textit{Sufficient statistic.}
By the factorization theorem, $T(Y_{1:n})=\sum_{i=1}^n Y_i$ is (minimal) sufficient.

\textit{Ranks are not sufficient.}
The rank vector $R=(R_1,\ldots,R_n)$ does not determine $\sum_i Y_i$:
for example, $(1,2,3)$ and $(1,2,30)$ have the same ranks but different sums.

\textit{Consequence (experiment/decision content).}
Any procedure measurable with respect to the rank $\sigma$-field discards metric
information carried by $T$; hence it cannot in general match the performance of
likelihood-based procedures uniformly over decision problems.
\end{example}

\begin{example}[Ancillarity of ranks in Gaussian location yields a sharp decision gap]
\label{ex:gaussian_ranks_ancillary}
We sharpen the ``rank reduction'' loss in a regular location model.

\textit{Model.}
Let $Y_1,\ldots,Y_n\stackrel{iid}{\sim}N(\theta,1)$ and consider estimation of
$\theta$ under squared-error loss on a bounded parameter space
$\Theta=[-M,M]$.

\textit{Full-data benchmark.}
The sample mean $\bar Y$ has risk $\mathbb{E}_\theta(\bar Y-\theta)^2=1/n$ for all
$\theta$.

\textit{Rank-only restriction.}
Let $\mathcal{R}$ be the rank $\sigma$-field (the permutation pattern of the sample).
In a location family with continuous noise, the ranks are ancillary: their law does
not depend on $\theta$. Consequently, for any estimator $T$ measurable w.r.t.\ $\mathcal{R}$,
the distribution of $T$ is independent of $\theta$.

\textit{Decision consequence.}
Write $\mu=\mathbb{E}[T]$ (a constant, independent of $\theta$). Then for any $\theta\in[-M,M]$,
\[
\mathbb{E}_\theta(T-\theta)^2
= \mathrm{Var}(T) + (\mu-\theta)^2
\ge (\mu-\theta)^2.
\]
Taking the supremum over $\theta\in[-M,M]$ yields
\[
\sup_{\theta\in[-M,M]}\mathbb{E}_\theta(T-\theta)^2 \;\ge\; M^2,
\]
minimized at $\mu=0$. Thus the minimax risk under rank-only access is at least $M^2$,
while the full-data minimax risk is $1/n$, giving a concrete decision gap.

\textit{Interpretation.}
This is an explicit instance of positive Blackwell/Le Cam deficiency: coarsening
to ranks removes essentially all location information, and the resulting loss is
visible even under benign regularity.
\end{example}

\section{Supplemental Material on Prediction-Powered Inference (PPI)}
\label{app:ppi}

\subsubsection*{Purpose and how to read this appendix}
The main paper uses Prediction-Powered Inference (PPI) as a case study showing
that a method can provide \emph{valid confidence procedures} while failing to
define a \emph{transportable belief object} (i.e., something that composes as a
$\sigma$-additive predictive state under sequential refinement).
This appendix collects supporting technical notes and short examples that would
otherwise interrupt the main argument. It is organized as follows:
(i) an experiment-theoretic view of what PPI observes and what it replaces;
(ii) a note on why ``PPI is an $M$-estimator'' holds only after enlarging the
experiment; and (iii) worked examples.

\subsection{PPI as an experiment: what is observed, what is reduced}
\label{subsec:ppi_experiment_view}

A typical PPI workflow combines:
(a) a large unlabeled sample $\{X_j\}_{j=1}^N$,
(b) a small labeled sample $\{(X_i,Y_i)\}_{i=1}^n$ with $n\ll N$, and
(c) a learned proxy $\widehat m$ produced by an upstream training pipeline.
Without specifying how $\widehat m$ is obtained and how the unlabeled design is
sampled, this is not a single fixed statistical experiment. Our separations
suggest viewing PPI through \emph{experiment comparison}:

\begin{itemize}[leftmargin=1.3em, itemsep=2pt]
\item \textbf{Semantic carrier.} A belief state (Bayesian or frequentist) that
\emph{updates coherently} corresponds to a $\sigma$-additive model equipped with
regular conditional distributions that compose along filtrations.

\item \textbf{Validity device.} PPI delivers confidence procedures for specified
targets under stated assumptions, but its output is not automatically a belief
state that can be conditioned/refined/optimized without tracking the enlarged
pipeline (training $\to$ labeling $\to$ deployment).

\item \textbf{Reduction and deficiency.} Replacing additional labels with a
proxy plus a correction induces an experiment reduction. If that reduction is
not (Blackwell) sufficient for the downstream decision problems of interest,
one should expect a positive deficiency gap in some regimes.
\end{itemize}

The examples below isolate three operational issues for composability:
(i) an explicit information gap (proxy uncertainty cannot be eliminated by
unlabeled scale alone),
(ii) \emph{procedural} non-extensionality (intervals depend on upstream proxy
training choices, not only on the deployment likelihood), and
(iii) stagewise composition/selection effects.

\subsection{Why PPI is not a classical fixed-criterion \texorpdfstring{$M$}{M}-estimator without enlarging the experiment}
\label{subsec:ppi_mestimation}

Classical $M$-estimation studies estimators defined as minimizers (or roots of
estimating equations) with a \emph{fixed} (nonrandom) criterion, e.g.
\[
\widehat\theta \in \arg\min_{\theta\in\Theta}\frac1n\sum_{i=1}^n \ell_\theta(Z_i),
\qquad\text{or}\qquad
\frac1n\sum_{i=1}^n \psi_\theta(Z_i)=0,
\]
under a single dominated experiment $\{P_\theta:\theta\in\Theta\}$ and regularity
conditions that yield asymptotic linearity and LAN expansions
\citep{lecam1986asymptotic}.

\medskip\noindent\textit{Key point.}
In PPI (including PPI++-style refinements), even if one writes an estimating
equation \emph{conditional on} $\widehat m$ (via sample splitting or cross-fitting),
the effective criterion is random through $\widehat m$, and $\widehat m$ depends on
upstream training and design regimes not encoded in the labeled-sample likelihood.
Classical $M$-estimation arguments therefore do not apply ``as stated'' unless one
explicitly enlarges the experiment to include (at least):
\begin{enumerate}[label=(\roman*), itemsep=2pt, leftmargin=1.6em]
\item the training mechanism/data that produced $\widehat m$ (including any
pretraining/fine-tuning assumptions), and
\item the unlabeled-design sampling (and any shift between training and deployment).
\end{enumerate}

\medskip\noindent\textit{Consequence for ``model-free'' rhetoric.}
If $\widehat m$ is treated as an arbitrary black box trained under unspecified
regimes, the standard inputs to fixed-criterion asymptotic theory (a single
experiment; a deterministic population objective; uniform stochastic control of
the criterion class) are missing. In our framework, the natural diagnostics are
transport/extensionality and experiment comparison: does the proxy+correction
define a reduction sufficient for the decisions in view? When it is not, one
should expect information-loss separations (positive deficiency) except in
special cases.

\subsection{Examples: information gaps and procedural non-extensionality}
\label{app:ppi_examples}

\subsubsection*{Information content and deficiency: proxy uncertainty does not disappear}

\begin{example}[A deficiency gap in linear regression with a proxy predictor]
\label{ex:ppi_deficiency}
We exhibit a Gaussian case where ``proxy + few labels'' is an experiment
reduction relative to observing fully labeled data, and where the information
gap is explicit.

\medskip\noindent\textit{Data-generating process.}
Let $(X,Y)$ satisfy
\[
Y = \beta X + \varepsilon,\qquad X\sim N(0,1),\qquad \varepsilon \sim N(0,\sigma^2),
\]
with $X \perp \varepsilon$ and unknown $\beta\in\mathbb{R}$.

\medskip\noindent\textit{Two experiments.}
\begin{enumerate}[label=(\roman*), itemsep=2pt, leftmargin=1.6em]
\item \textbf{Full-label experiment $\mathcal{E}_{\mathrm{full}}(N)$.}
Observe $N$ labeled samples $\{(X_i,Y_i)\}_{i=1}^N$.
\item \textbf{Proxy+few-label experiment $\mathcal{E}_{\mathrm{PPI}}(N,n)$.}
Observe $N$ covariates $\{X_j\}_{j=1}^N$ (unlabeled), $n$ labeled samples
$\{(X_i,Y_i)\}_{i=1}^n$ with $n\ll N$, and a proxy slope $\widetilde{\beta}$
coming from an upstream pipeline. Model the proxy as
\[
\widetilde{\beta} = \beta + U,\qquad U\sim N(0,\tau^2),
\]
independent of the current labeled data conditional on $\beta$.
\end{enumerate}
\medskip\noindent\textit{A PPI-style corrected estimator.}
A natural one-step correction is
\[
\widehat{\beta}_{\mathrm{PPI}}
=
\widetilde{\beta}
+
\frac{\sum_{i=1}^n X_i\,(Y_i-\widetilde{\beta}X_i)}{\sum_{i=1}^n X_i^2}.
\]
Conditionally on $X_{1:n}$, this is unbiased for $\beta$ and
\[
\mathrm{Var}\!\left(\widehat{\beta}_{\mathrm{PPI}}\mid X_{1:n}\right)
=
\tau^2 + \frac{\sigma^2}{\sum_{i=1}^n X_i^2}
\;\approx\;
\tau^2 + \frac{\sigma^2}{n}.
\]

\medskip\noindent\textit{Full-label benchmark and explicit gap.}
In $\mathcal{E}_{\mathrm{full}}(N)$, the MLE
$\widehat{\beta}_{\mathrm{full}}
= \frac{\sum_{i=1}^N X_iY_i}{\sum_{i=1}^N X_i^2}$
satisfies
\[
\mathrm{Var}\!\left(\widehat{\beta}_{\mathrm{full}} \mid X_{1:N}\right)
=
\frac{\sigma^2}{\sum_{i=1}^N X_i^2}
\;\approx\;
\frac{\sigma^2}{N}.
\]
Thus unlabeled scale alone cannot close the gap unless the proxy is so sharp that
$\tau^2\ll \sigma^2/N$ (equivalently: the proxy behaves as if it were trained on
a comparably informative labeled experiment).

\medskip\noindent\textit{Decision-theoretic interpretation.}
Both experiments are (asymptotically) Gaussian shift experiments for $\beta$ with
effective Fisher information
\[
I_{\mathrm{full}}(N)\approx \frac{N}{\sigma^2},
\qquad
I_{\mathrm{PPI}}(N,n)\approx \frac{1}{\tau^2}+\frac{n}{\sigma^2}.
\]
Whenever $I_{\mathrm{PPI}}(N,n) < I_{\mathrm{full}}(N)$, the proxy+few-label
experiment is Blackwell-inferior, and there exist bounded-loss decision problems
for which any procedure based on $(\widetilde{\beta}, X_{1:N}, (X,Y)_{1:n})$ is
uniformly dominated by a procedure that observes $N$ labeled samples.
\end{example}

\subsubsection*{Procedural non-extensionality: dependence on upstream proxy training}

\begin{example}[Non-extensionality induced by the proxy training distribution]
\label{ex:ppi_shift}
Even if the \emph{deployment} conditional law $P(Y\mid X)$ is fixed, a PPI
workflow can produce different procedures when the proxy is trained under
different upstream distributions or regularization choices.

\medskip\noindent\textit{Fixed deployment law.}
Assume deployment satisfies
\[
Y = \beta X + \varepsilon,\qquad X\sim P_{\mathrm{dep}},\qquad \varepsilon\sim N(0,\sigma^2),
\]
with $\beta$ fixed across worlds.

\medskip\noindent\textit{Two proxy pipelines.}
Suppose the proxy is trained upstream by ridge regression with penalty $\lambda>0$.
In the population limit,
\[
\beta_{\mathrm{ridge}}(P)
=
\arg\min_b \, \mathbb{E}_{P}\!\left[(Y-bX)^2\right] + \lambda b^2
=
\frac{\mathbb{E}_{P}[X^2]}{\mathbb{E}_{P}[X^2]+\lambda}\,\beta,
\]
which depends on the upstream covariate distribution through $\mathbb{E}_P[X^2]$.
If $P_A$ and $P_B$ satisfy $\mathbb{E}_{P_A}[X^2]\neq \mathbb{E}_{P_B}[X^2]$
(e.g., $X\sim N(0,1)$ vs.\ $X\sim N(0,4)$), then the induced proxies
$\widetilde{\beta}_A=\beta_{\mathrm{ridge}}(P_A)$ and
$\widetilde{\beta}_B=\beta_{\mathrm{ridge}}(P_B)$ differ even though deployment
$P(Y\mid X)$ is unchanged.

\medskip\noindent\textit{Impact on proxy-centered residual scales.}
Many proxy-based intervals depend on a residual scale computed around the proxy,
\[
\widehat{\sigma}^2(\widetilde{\beta})
=
\frac{1}{n}\sum_{i=1}^n (Y_i-\widetilde{\beta}X_i)^2.
\]
Under the deployment law,
\[
\mathbb{E}\!\left[(Y-\widetilde{\beta}X)^2\right]
=
\sigma^2 + (\beta-\widetilde{\beta})^2\,\mathbb{E}_{P_{\mathrm{dep}}}[X^2],
\]
so confidence radii depend on upstream shrinkage error $(\beta-\widetilde{\beta})$,
hence on proxy training choices, not only on the deployment likelihood.

\medskip\noindent\textit{Interpretation.}
Two pipelines can share the same deployment likelihood and labeled sample size,
yet produce different (still valid) PPI intervals because the proxy was trained
under different upstream distributions/penalties. The resulting inference object
is therefore a function of the enlarged workflow, not of the deployment likelihood alone.
\end{example}

\paragraph{Remark (composition/selection).}
PPI confidence statements are target- and stage-specific. If intermediate outputs
are used to trigger downstream selection (e.g., deploy only if $\widehat\theta>\tau$,
or restrict to a subgroup chosen after seeing the PPI output), then the relevant
law is conditioned on a data-dependent event. Preserving nominal guarantees in
such pipelines typically requires modeling the enlarged filtration (or using
explicit selective-inference corrections), rather than treating the PPI output
as an updateable predictive state.

\bibliographystyle{apalike}
\bibliography{refs_arxiv}

\end{document}